\numberwithin{equation}{section}
\newcommand{\N}{\mathbb{N}}
\newcommand{\Z}{\mathbb{Z}}
\newcommand{\cT}{\mathcal{T}}
\renewcommand{\aa}{\underline{a}}
\newcommand{\dd}{\underline{d}}
\renewcommand{\d}{\mathrm{d}}
\DeclareMathOperator{\st}{Stab}
\DeclareMathOperator{\fix}{Fix}
\DeclareMathOperator{\ax}{Axis}
\newtheorem{thm}{Theorem}[section]
\newtheorem{prop}[thm]{Proposition}
\newtheorem{lemma}[thm]{Lemma}
\newtheorem{cor}[thm]{Corollary}
\theoremstyle{definition}
\newtheorem{defn}[thm]{Definition}
\newtheorem{question}[thm]{Question}
\theoremstyle{remark}
\newtheorem{ex}[thm]{Example}
\newtheorem{rem}[thm]{Remark}
\newtheorem{notation}[thm]{Notation}
\title[Right-angled Artin subgroups in one-relator groups]{Right-angled Artin subgroups and free products in one-relator groups}
\author{Ashot Minasyan}
\address[A.\ Minasyan]{CGTA, School of Mathematical Sciences, University of Southampton, Highfield, Southampton, SO17~1BJ, United Kingdom}
\email{aminasyan@gmail.com}
\author{Motiejus Valiunas}
\address[M.\ Valiunas]{Instytut Matematyczny, Uniwersytet Wroc{\l}awski, plac Grunwaldzki 2, 50-384 Wroc{\l}aw, Poland}\thanks{The second author was partially supported by the National Science Centre (Poland) grant 2022/47/D/ST1/00779.}
\email{Motiejus.Valiunas@math.uni.wroc.pl}
\keywords{One-relator groups,  right-angled Artin groups, trace monoids, property $P_{nai}$, $C^*$-simplicity}
\subjclass{20F05 (primary), 20E06, 20E07, 20E08, 20M05, 46L35 (secondary)}
\begin{document}

\begin{abstract}
We investigate criteria ensuring that a one-relator group $G$ contains a right-angled Artin subgroup $A(\Gamma)$, corresponding to a finite graph $\Gamma$. In particular, we prove that if $\Gamma$ is a forest with at least one edge and the positive submonoid $T(\Gamma)$, of $A(\Gamma)$, embeds into $G$ then so does all of $A(\Gamma)$. As by-products of our methods we obtain characterisations of one-relator groups that have property $P_{nai}$ and that are $C^*$-simple.
\end{abstract}

\maketitle
\section{Introduction}
Given a simplicial graph $\Gamma$ with vertex set $V(\Gamma)$ and edge set $E(\Gamma)$, one can look at the following presentation:
\begin{equation}\label{eq:pres_of_a_RAAG} 
    \langle V(\Gamma) \mid uv=vu,\text{ provided } \{u,v\} \in E(\Gamma) \rangle.
\end{equation}
Considered as a presentation of a group, \eqref{eq:pres_of_a_RAAG} defines the \emph{right-angled Artin group} $A(\Gamma)$, corresponding to the graph $\Gamma$. But presentation \eqref{eq:pres_of_a_RAAG} can also be considered as a monoid presentation, in which case it defines the \emph{trace monoid} $T(\Gamma)$. Right-angled Artin groups (a.k.a., \emph{graph groups} or \emph{partially commutative groups}) occupy the central stage in Geometric Group Theory because of their rich structure of subgroups, nice geometric properties and their key  role in the theory of special cube complexes \cite{Charney,Wise-book}. On the other hand, trace monoids have been used in Computer Science as an important algebraic model for studying paralellism \cite{Diekert}. It is known \cite{Paris} that the trace monoid $T(\Gamma)$ is naturally isomorphic to the submonoid of $A(\Gamma)$ generated by $V(\Gamma)$, i.e., $T(\Gamma) \cong A(\Gamma)^+$ is the positive submonoid of $A(\Gamma)$.

A classical theme of infinite Group Theory is the study of 
\emph{one-relator groups}, that is, groups defined by a presentation 
of the form 
\begin{equation}\label{eq:1-rel_pres} 
G=\langle x_1,\dots , x_k \mid W=1 \rangle,    
\end{equation}
where  $k \in \N\cup \{0\}$ and $W$ is a cyclically reduced word in the free group $F$, freely generated by $\{x_1,\ldots,x_k\}$. Embeddings of trace monoids and right-angled Artin groups into one-relator groups have been investigated by Gray \cite{Gray} and Foniqi, Gray and Nyberg-Brodda \cite{FGNB}, who were motivated by decision problems in one-relator groups and one-relation monoids. In particular, such embeddings were used in \cite{Gray} to construct one-relator groups with undecidable rational subset membership problem and one-relator inverse monoids with unsolvable word problem. This is further developed in \cite{FGNB}, where it is shown that the submonoid membership problem and the prefix membership problem need not be decidable even in some ``nice'' specific subclasses of one-relator groups.

Let $P_n$ denote the path with $n$ vertices, of length $n-1$. For example, when $n=4$, $A(P_4)$ can be described by the following presentation:
\begin{equation}\label{eq:pres-P_4}
    A(P_4)=\langle \alpha,\beta,\gamma,\delta \mid \alpha \beta=\beta\alpha,~\beta \gamma=\gamma\beta,~\gamma\delta=\delta\gamma \rangle.
\end{equation} 
This presentation can also be considered as a monoid presentation, defining the trace monoid $T(P_4)$. The monoid $T(P_4)$ and the group $A(P_4)$ seem to occupy a special place in the discussion of submonoid and rational subset membership problem, see  \cite{LohSte} and \cite{FGNB}.
Motivated by this, in \cite[Question~6.13]{FGNB} the authors ask whether there exists a one-relator group that contains a submonoid isomorphic to $T(P_4)$ but no subgroups isomorphic to $A(P_4)$. We answer a more general version of this question in the negative, as follows.

\begin{thm}\label{thm:main} Let $G$ be a one-relator group and let $\Gamma$ be a finite forest with at least one edge. If $T(\Gamma)$ embeds into $G$ then so does $A(\Gamma)$.
\end{thm}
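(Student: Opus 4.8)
The plan is to build a copy of $A(\Gamma)$ inside $G$ from the commutation data recorded by a fixed monoid embedding $\iota\colon T(\Gamma)\hookrightarrow G$, and to use the structure theory of one-relator groups to exclude the collapses that would otherwise spoil it. The starting observation is that whenever $\{u,v\}\in E(\Gamma)$ the elements $\iota(u),\iota(v)$ commute and generate a rank-$2$ free commutative submonoid of $G$; since a commuting pair whose powers satisfy a nontrivial relation would force a numerical-semigroup relation among the monoid generators, this already implies $\langle\iota(u),\iota(v)\rangle\cong\Z^2$. In particular $G$ contains $\Z^2$, every $\iota(w)$ has $C_G(\iota(w))\supsetneq\langle\iota(w)\rangle$, and if $w$ has two neighbours $u,u'$ that are not adjacent to each other (which happens as soon as $\deg w\ge 2$, since $\Gamma$ is a forest) then $\iota(u),\iota(u')$ do not commute, so $C_G(\iota(w))$ is nonabelian. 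Thus the monoid embedding already forces $G$ to contain the ``obvious'' abelian subgroups with room to spare.

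The core step is the case where $\Gamma$ is a single tree, and here I would prove the stronger statement that the natural epimorphism $A(\Gamma)\twoheadrightarrow\langle\iota(V(\Gamma))\rangle$ is an isomorphism, by induction on $|E(\Gamma)|$. Let $v$ be a leaf attached to $w$ and $\Gamma_0=\Gamma\setminus\{v\}$, so that $A(\Gamma)=A(\Gamma_0)\ast_{\langle w\rangle}(\langle w\rangle\times\langle v\rangle)$ with $\langle w\rangle\times\langle v\rangle\cong\Z^2$. By the inductive hypothesis the images of $V(\Gamma_0)$ generate a copy $H\cong A(\Gamma_0)$ in $G$ containing $\iota(w)$, and by the opening observation $\iota(v)$ commutes with $\iota(w)$ and $\langle\iota(w),\iota(v)\rangle\cong\Z^2$. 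It remains to see that $\langle H,\iota(v)\rangle=H\ast_{\langle\iota(w)\rangle}\langle\iota(w),\iota(v)\rangle$, i.e.\ that $A(\Gamma)\to G$ is injective. This is where the one-relator hypothesis enters: using the Magnus--Karrass--Solitar hierarchy together with Bass--Serre theory and the classical results on centralizers and on intersections of Magnus subgroups in one-relator groups, one argues that, precisely because $T(\Gamma)$ embeds (so that $C_G(\iota(w))$ is large, and nonabelian whenever $\deg w\ge 2$), the subgroups $H$ and $\langle\iota(w),\iota(v)\rangle$ meet exactly in $\langle\iota(w)\rangle$ and the amalgam has no normal-form collapse. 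The one-relator groups to which this machinery does not apply --- the virtually metabelian ones (among them $\Z^2$, the Klein bottle group and the groups $\mathrm{BS}(1,n)$) and the ones with nontrivial centre (which, when nonabelian, are virtually $F_n\times\Z$) --- must be dealt with by inspection: in each of them the mere existence of $\iota$ forces $\Gamma$ to be so restricted that the conclusion is immediate.

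For a general forest, write $\Gamma=\Gamma_1\sqcup\dots\sqcup\Gamma_m\sqcup\{v_1,\dots,v_r\}$ with each $\Gamma_i$ a tree containing an edge and the $v_j$ isolated, so that $A(\Gamma)\cong A(\Gamma_1)\ast\dots\ast A(\Gamma_m)\ast F_r$ with $F_r$ free of rank $r$. Each $A(\Gamma_i)$ embeds into $G$ by the core step, and the elements $\iota(v_j)$ have infinite order; but since a free product of submonoids need not generate a free product of subgroups, one cannot simply take these images as they stand. Instead I would appeal to the free-product results for one-relator groups proved earlier in the paper: outside the same short list of exceptions $G$ has property $P_{nai}$ (and is $C^*$-simple), which lets one move the factors $A(\Gamma_i)$ to suitable conjugates and choose suitable infinite-order replacements for the $\iota(v_j)$ so that together they generate $A(\Gamma_1)\ast\dots\ast A(\Gamma_m)\ast F_r$; the exceptional $G$ are again treated directly, the embedding of $T(\Gamma)$ leaving room only for $\Gamma$ essentially a single edge together with isolated vertices.

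The step I expect to be hardest is the amalgamation inside the tree case: promoting faithfulness of $T(\Gamma)\hookrightarrow G$ --- a statement about positive words --- to faithfulness of $A(\Gamma)\to G$ --- a statement about arbitrary words --- means ruling out the relations that inverting generators could create, and this requires combining Bass--Serre normal forms along the one-relator hierarchy with precise control of how the copies of $\Z^2$ coming from edges of $\Gamma$ sit relative to the Magnus subgroups of $G$.
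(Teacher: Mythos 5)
Your core step rests on a statement that is false, and the paper itself supplies the counterexample. You propose to show, for a tree $\Gamma$, that the natural epimorphism $A(\Gamma)\twoheadrightarrow\langle\iota(V(\Gamma))\rangle$ is an isomorphism, i.e.\ that the images of the trace-monoid generators already generate a copy of $A(\Gamma)$ in the natural way. Example~\ref{ex:P3-need-powers} refutes this already for $\Gamma=P_3$: in the trefoil group $G=\langle x,y\mid x^3=y^2\rangle$ the elements $a=x^2y$, $b=x^3$, $c=xy$ generate $T(P_3)$ in the natural way, yet $b=(ac^{-1})^3$ and $\langle a,b,c\rangle=G$ is not a right-angled Artin group; Example~\ref{ex:P4-need-powers-v2} does the same for $P_4$ inside a one-relator group. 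Consequently one must pass to proper powers of the generators (Proposition~\ref{prop:P3}), and for $P_4$ even to the conjugates $ac^ka^{-1}$ and $d^{-1}b^nd$ (Proposition~\ref{prop:n=4}). This undermines your leaf-peeling induction: once generators have been replaced by powers and conjugates at one stage, the element attached at the next leaf need no longer commute with the modified generator over which you want to amalgamate, and there is no evident way to restart. The paper avoids induction on the tree entirely: it proves the power-and-conjugate-corrected statement only for $P_3$ and $P_4$, by a case analysis of elliptic/hyperbolic types on the Bass--Serre tree of the Magnus--Moldavanskii splitting, and then invokes the theorem of Kim and Koberda that $A(P_4)$ contains $A(\Gamma)$ for every finite forest $\Gamma$, with a separate route through $A(P_{d+1})*\Z$ when every component has diameter at most $2$.

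Two further points. In the disconnected case you want $A(\Gamma_1)*\dots*A(\Gamma_m)*F_r$ inside $G$, but Theorem~\ref{thm:one-relator-free-or-snormal} only produces, for a single infinite-order $f$, isomorphisms $\langle f,A_i\rangle\cong\langle f\rangle*A_i$ one factor at a time, not a joint free product of all the factors; the paper sidesteps this by embedding every component into a single copy of $A=A(P_{d+1})$ and then realising the $n$-fold free power of $A$ as $A*fAf^{-1}*\dots*f^{n-1}Af^{-(n-1)}$ inside $A*\langle f\rangle$ (Lemma~\ref{lem:free-products-embed}), so only one free factor of the form $A*\Z$ is ever needed. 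Your version would require a genuinely stronger simultaneous free-product statement that is not established. Finally, $P_{nai}$ as defined concerns cyclic subgroups only, whereas the free factors here have non-trivial centre but are far from cyclic, so the relevant tool is Theorem~\ref{thm:one-relator-free-or-snormal} (applied via Corollary~\ref{cor:A-weakly_malnormal} to the weakly malnormal $\Z^2$ coming from an edge), not $P_{nai}$ itself.
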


\begin{rem}\label{rem:RAAG_emb_in_1-rel_iff_forest} It is known that for a finite graph $\Gamma$, $A(\Gamma)$ embeds in a one-relator group if and only if $\Gamma$ is a forest (see \cite[Theorem~2.2 and Remark~2.3]{Gray}).  
\end{rem}

\begin{rem} The condition that $\Gamma$ has at least one edge in Theorem~\ref{thm:main} is equivalent to the statement that $T(\Gamma)$ is not a free monoid. While this condition is essential (see Example~\ref{ex:BS} below), it can be replaced by the assumption that $G$ is not a solvable Baumslag--Solitar group, because all other non-cyclic one-relator groups contain non-abelian free subgroups.    
\end{rem}

\begin{ex}\label{ex:BS} Theorem~\ref{thm:main} does not generalise to the case when $\Gamma$ is totally disconnected (i.e., when $E(\Gamma)=\varnothing$). For example, the Baumslag--Solitar group $G = BS(1,2) = \langle a,t \mid tat^{-1} = a^2 \rangle$ is solvable and, therefore, it does not contain non-abelian free subgroups. However,  $G$ does contain a free submonoid of rank $2$: this follows from the work of Rosenblatt \cite[Theorem~4.7]{Rosen} since $G$ is not polycyclic. It is also not hard to verify directly that $t$ and $at$ freely generate a free submonoid in $G$.
\end{ex}

It is easy to see that for a finite tree $\Gamma$ of diameter at most $2$, $A(\Gamma)$ embeds into $A(P_3)$ (see Lemma~\ref{lem:tree_of_small_diam}). On the other hand, in \cite{KimKoberda} Kim and Koberda proved that $A(P_4)$ contains subgroups isomorphic to $A(\Gamma)$ for any finite forest $\Gamma$. Therefore, in the case when $\Gamma$ is connected Theorem~\ref{thm:main} can be quickly obtained from the following result.

\begin{thm}\label{thm:P_n}
Let $n = 3$ or $n = 4$. If a one-relator group $G$ contains $T(P_n)$ then it also contains $A(P_n)$. Moreover, if elements $a_1,\dots,a_n$ generate a submonoid isomorphic to $T(P_n)$ in $G$ then the subgroup $\langle a_1,\dots,a_n\rangle \leqslant G$ contains a copy of $A(P_n)$.
\end{thm}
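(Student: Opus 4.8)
The plan is to exploit the commutation data carried by a copy of $T(P_n)$ to reduce everything to producing free subgroups of rank two, and then to feed this into the structure theory of one-relator groups. Consider first $n=3$: let $a_1,a_2,a_3\in G$ generate a submonoid isomorphic to $T(P_3)$ and put $H=\langle a_1,a_2,a_3\rangle$. Since $a_2$ commutes with $a_1$ and $a_3$ in $T(P_3)$, its image is central in $H$, and since $\langle a_2\rangle^+\cong\N$ embeds, $a_2$ has infinite order; likewise $\langle a_1,a_3\rangle^+\cong\N*\N$ and $\langle a_1,a_2\rangle^+\cong\N^2$, so $K:=\langle a_1,a_3\rangle$ contains a free subsemigroup of rank two while $\langle a_1,a_2\rangle\cong\Z^2$. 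It then suffices to find a free subgroup $F\cong F_2$ inside $K$: indeed $\langle a_2\rangle$ commutes with $F$, and $F\cap\langle a_2\rangle$, being a central subgroup of $F$, is trivial, so $\langle F,a_2\rangle\cong F\times\langle a_2\rangle\cong F_2\times\Z\cong A(P_3)$, which sits inside $H$.

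The core step is to produce this $F_2$. The subgroup $K$ is finitely generated and of exponential growth, hence not virtually cyclic; I claim it is not virtually solvable either, which by the Tits alternative for one-relator groups forces $F_2\leqslant K$. Note first that $G$ contains $\Z^2$, so $G$ is neither cyclic nor a solvable Baumslag--Solitar group $BS(1,m)$ (for $|m|\geqslant 2$ these contain no $\Z^2$; otherwise they have polynomial growth, contradicting the free subsemigroup in $K$). Next, $K$ is normal in $H$ with $H/K$ cyclic, since $a_2$ is central and so any $h\in H$ can be written $w(a_1,a_3)a_2^k$ with conjugation by $h$ preserving $K$. Hence, were $K$ virtually solvable, so would be $H$, making $H$ a finitely generated virtually solvable subgroup of a one-relator group that contains both a $\Z^2$ and a free subsemigroup of rank two; but such subgroups are virtually cyclic, virtually $\Z^2$, or virtually $BS(1,m)$ (by the structure theory of one-relator groups, via the Magnus--Moldavanskii hierarchy and coherence), and none of these simultaneously contains a $\Z^2$ and a free subsemigroup of rank two --- a contradiction. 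So $F_2\leqslant K$, settling $n=3$. (Concretely, the structure-theoretic input is obtained by induction through the hierarchy: running the action of a finitely generated subgroup on the Bass--Serre tree of the top splitting, it either has two independent hyperbolic isometries, whence ping-pong gives $F_2$; or stabilises a line, hence is free-by-cyclic and contains $F_2$ once it has exponential growth; or fixes a vertex, and one recurses into a vertex group --- a free or shorter one-relator group, with base case a free group, which contains no $\Z^2$; or fixes an end, a case handled by a supplementary argument using coherence.)

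For $n=4$, the triples $\{a_1,a_2,a_3\}$ and $\{a_2,a_3,a_4\}$ generate copies of $T(P_3)$, and $\langle a_2,a_3\rangle^+\cong\N^2$ gives $\langle a_2,a_3\rangle\cong\Z^2$. Recall $A(P_4)\cong A(P_3)*_{\Z^2}A(P_3)$, amalgamated over the $\Z^2$ carried by the central edge (the central vertex of $P_3$ being ``$2$'' in the first factor and ``$3$'' in the second). Strengthening the case $n=3$ so that the free subgroup it produces inside $\langle a_1,a_3\rangle$ has $a_3$ (or a fixed power of it) as one of its two free generators --- arranged by a suitable choice in the ping-pong --- one obtains $c\in\langle a_1,a_3\rangle$ and $c'\in\langle a_2,a_4\rangle$ with $\langle a_3,c\rangle\cong F_2\cong\langle a_2,c'\rangle$; since $c$ is a word in generators commuting with $a_2$ and $c'$ a word in generators commuting with $a_3$, we get $\langle a_2,a_3,c\rangle\cong A(P_3)\cong\langle a_2,a_3,c'\rangle$, each containing the common subgroup $\langle a_2,a_3\rangle\cong\Z^2$. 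Finally, a normal-form (ping-pong) argument inside $G$ shows that $\langle a_2,a_3,c,c'\rangle$ is the amalgamated product of these two copies of $A(P_3)$ over $\Z^2$, hence isomorphic to $A(P_4)$, and it lies in $\langle a_1,a_2,a_3,a_4\rangle$.

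The main obstacle is the structure-theoretic input of the core step: a Tits alternative for finitely generated subgroups of one-relator groups together with a precise description of the virtually solvable ones, which requires controlling --- via the hierarchy and coherence --- subgroups that fix an end of a hierarchy tree. The secondary difficulty is the gluing for $n=4$: producing the $P_3$-subgroup in a shape compatible with the amalgam decomposition of $A(P_4)$ and verifying the normal form of that amalgam inside $G$.
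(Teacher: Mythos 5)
Your reduction for $n=3$ (a central infinite-order $a_2$ plus any $F_2\leqslant\langle a_1,a_3\rangle$ yields $F_2\times\Z\cong A(P_3)$ inside $\langle a_1,a_2,a_3\rangle$) is sound, but the step that produces the $F_2$ is a genuine gap: you invoke a Tits alternative for finitely generated subgroups of one-relator groups together with a classification of their virtually solvable subgroups. Neither is proved in your proposal, and neither is a classical citable fact; the hardest case of such a Tits alternative is exactly the one your parenthetical sketch defers (``fixes an end \dots\ handled by a supplementary argument using coherence'' --- coherence of one-relator groups is itself a major recent theorem, and you do not explain how it would close that case). The paper avoids all of this: Proposition~\ref{prop:P3} runs a direct case analysis on the isometry types of $a,b,c$ acting on the Bass--Serre tree of the top level of the Magnus--Moldavanskii hierarchy and applies explicit ping-pong, using only the injectivity of $\psi$ on $\langle\alpha,\beta\rangle\cup\langle\beta,\gamma\rangle$ (which is what Lemma~\ref{lem:inj_on_trace->inj_on_unions} extracts from the $T(P_3)$ hypothesis); the fixed-end/ascending-HNN situation is handled there by Lemmas~\ref{lem:fixed_end} and~\ref{lem:bounded_intersec_of_axes}, not by any appeal to coherence or a Tits alternative.

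For $n=4$ your plan is structurally close to the paper's (the generators $ac^ka^{-1}$, $b^l$, $c^m$, $d^{-1}b^nd$ of Proposition~\ref{prop:P_4-enhanced} are exactly of your shape: an element of $\langle a_1,a_3\rangle$ commuting with $a_2$, powers of $a_2,a_3$, and an element of $\langle a_2,a_4\rangle$ commuting with $a_3$), but the sentence ``a normal-form (ping-pong) argument inside $G$ shows that $\langle a_2,a_3,c,c'\rangle$ is the amalgamated product'' is precisely the content of Proposition~\ref{prop:n=4}, i.e.\ the technical heart of the theorem, and it is not routine: the paper needs a four-case analysis on the tree, and in the hardest cases the ping-pong inclusions are established only via Bagherzadeh's theorem (Proposition~\ref{prop:bagh}) on intersections of conjugate Magnus subgroups --- see Lemma~\ref{lem:b.hyp-c.ell}. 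Likewise, ``arranged by a suitable choice in the ping-pong'' for getting a power of $a_3$ as a free generator of the $F_2$ in $\langle a_1,a_3\rangle$ is unjustified (the paper must pass to the conjugate $ac^ka^{-1}$ rather than a power of $a_1$, and whether powers alone suffice is posed as an open problem in Question~\ref{q:conj_not_required}); note also that your two copies of $A(P_3)$ carry the abelian subgroups $\langle a_2,a_3^k\rangle$ and $\langle a_2^l,a_3\rangle$, which must first be cut down to a common $\Z^2$ before amalgamating. Since you flag both points yourself as the ``main obstacle'' and ``secondary difficulty'', the proposal is an outline with the two essential steps missing rather than a proof.
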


The case $n=3$ in Theorem~\ref{thm:P_n} is much easier and can be deduced from a theorem of Bieri \cite[Corollary~8.7]{Bieri} stating that the quotient of a finitely generated group of cohomological dimension $2$ by a non-trivial normal subgroup is virtually free, although we give an independent argument using actions on trees. However, we are not aware of any general results that would also cover the case $n=4$. In fact, we prove the following much stronger statement.

\begin{prop} \label{prop:P_4-enhanced}
Let $G$ be a one-relator group and let $b,c \in G$ be elements generating a copy of $\Z^2$ in $G$. Suppose that there are elements $a \in \mathrm{C}_G(b)$ and $d \in \mathrm{C}_G(c)$ such that the following three conditions are satisfied:
\begin{enumerate}[label=\rm(\roman*)]
    \item $a\langle b,c \rangle a^{-1} \cap \langle b,c \rangle=\langle b \rangle$;
    \item $\langle b,c \rangle \cap d^{-1} \langle b,c \rangle d=\langle c \rangle$;
    \item $a\langle b,c \rangle a^{-1} \cap d^{-1} \langle b,c \rangle d=\{1\}$.
\end{enumerate}
Then there exist integers $k,l,m,n \in \Z\setminus\{0\}$ such that the elements $ac^ka^{-1}$, $b^l$, $c^m$, $d^{-1}b^n d$ naturally generate a copy of $A(P_4)$ in $G$. More precisely, the assignment $\alpha \mapsto ac^ka^{-1}$, $\beta \mapsto b^l$, $\gamma \mapsto c^m$ and $\delta \mapsto d^{-1}b^n d$ extends to an injective homomorphism $A(P_4) \to G$, where $A(P_4)$ is given by \eqref{eq:pres-P_4}.    
\end{prop}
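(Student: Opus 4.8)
The relations in \eqref{eq:pres-P_4} hold for the proposed images regardless of the (non-zero) exponents: $a$ commutes with $b$ and $c$ commutes with $b$, so $ac^ka^{-1}$ commutes with $b$ and hence with $b^l$; $b^l$ and $c^m$ commute because $b$ and $c$ do; and symmetrically $d^{-1}b^nd$ commutes with $c^m$. Thus there is a homomorphism $\varphi\colon A(P_4)\to G$ realising the stated assignment, and the whole content of the proposition is that $k,l,m,n$ can be chosen so that $\varphi$ is injective. Since the hypotheses force $\Z^2\leqslant G$, the group $G$ is non-free and torsion-free (one-relator groups with torsion being hyperbolic, hence free of $\Z^2$ subgroups); hence $G$ has cohomological dimension $2$, contains no copy of $\Z^3$, and every abelian subgroup of $G$ has rank at most $2$.

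The plan is to realise $A(P_4)$ as an amalgamated product of two copies of $A(P_3)$ and to build the embedding from the two halves. Write $H=\langle b,c\rangle\cong\Z^2$ and let $F_2$ denote the free group of rank $2$. Splitting $A(P_4)$ along the edge $\{\beta,\gamma\}$ gives $A(P_4)\cong A(P_3)\ast_{\Z^2}A(P_3)$, where the two vertex groups are the centralisers $\mathrm{C}_{A(P_4)}(\beta)=\langle\alpha,\beta,\gamma\rangle$ and $\mathrm{C}_{A(P_4)}(\gamma)=\langle\beta,\gamma,\delta\rangle$, each isomorphic to $F_2\times\Z$, and the edge group is $\langle\beta,\gamma\rangle\cong\Z^2$. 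Accordingly I would first show, using condition~(i) together with $aba^{-1}=b$, that $\langle aHa^{-1},H\rangle\leqslant G$ is the amalgamated product $aHa^{-1}\ast_{\langle b\rangle}H\cong\Z\times F_2\cong A(P_3)$ --- the two flats $aHa^{-1}$ and $H$ meeting in exactly the line $\langle b\rangle$ --- and symmetrically that condition~(ii) gives $\langle H,d^{-1}Hd\rangle\cong H\ast_{\langle c\rangle}d^{-1}Hd\cong A(P_3)$. I would then use condition~(iii), which says that the third flat $d^{-1}Hd$ meets $aHa^{-1}$ trivially, together with~(i) and~(ii), to show that these two copies of $A(P_3)$ intersect in exactly $H$ inside $G$ and generate their amalgamated product over it; replacing $b,c,aca^{-1},d^{-1}bd$ by suitable powers $b^l,c^m,ac^ka^{-1}=(aca^{-1})^k,d^{-1}b^nd=(d^{-1}bd)^n$ then yields $\langle ac^ka^{-1},b^l,c^m,d^{-1}b^nd\rangle\cong A(P_3)\ast_{\Z^2}A(P_3)\cong A(P_4)$, as desired.

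The non-formal input is turning the intersection hypotheses~(i)--(iii) into these amalgam (respectively tree-of-groups) decompositions, since bare intersection conditions never suffice for this in an arbitrary group; here one must exploit that $G$ is a one-relator group. I would do so via the structure of $\Z^2$-subgroups of one-relator groups and a splitting of $G$ itself --- a Magnus--Moldavanskii-type HNN or amalgam decomposition, or an acylindrical action on a tree --- in which $H$ fixes a vertex, conjugates of $H$ have intersections governed by edge stabilisers, and the standard combination theorems / ping-pong arguments for graphs of groups apply once the elements in play ``move far enough''. This is exactly where the passage to large exponents $k,l,m,n$ enters: it lets one pass to a finite-index subgroup of $H$ on which the relevant action is clean, and makes $b^l,c^m$ together with the conjugated generators satisfy the ping-pong inequalities that rule out accidental relations between the $\langle b\rangle$- and $\langle c\rangle$-directions and the transverse directions.

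The step I expect to be the main obstacle is precisely this ``shared-line'' configuration inside each copy of $A(P_3)$. In the free-product situation two subgroups with trivial intersection freely generate a free product as soon as the ambient geometry is negatively curved; but here the flats $aHa^{-1}$ and $H$ overlap in the line $\langle b\rangle$, and one must show that $b^l$ and $ac^ka^{-1}$ generate a copy of $\Z\times F_2$ rather than some proper quotient --- equivalently, that $ac^ka^{-1}$ and $c^m$ generate a free group of rank $2$ in $G$ even though both elements lie ``near'' the abelian subgroup $H$. Carrying this out, and then re-gluing the two halves across the whole of $H$ using condition~(iii) without destroying injectivity, is the technical heart; the remaining bookkeeping with the graph-of-groups normal form, and checking that the resulting group is $A(P_4)$ rather than some larger right-angled Artin group, is routine by comparison.
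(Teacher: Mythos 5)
Your setup is correct: the relations of \eqref{eq:pres-P_4} do hold for the proposed images for any non-zero exponents, so a homomorphism $\varphi\colon A(P_4)\to G$ exists and the whole issue is injectivity; and the algebraic splitting $A(P_4)\cong \langle\alpha,\beta,\gamma\rangle *_{\langle\beta,\gamma\rangle}\langle\beta,\gamma,\delta\rangle \cong A(P_3)*_{\Z^2}A(P_3)$ is a valid decomposition. But the proposal has genuine gaps precisely at the points you defer. First, your plan rests on a configuration ``in which $H=\langle b,c\rangle$ fixes a vertex'' of the Bass--Serre tree coming from the Magnus--Moldavanskii hierarchy. This is simply not available in general: $b$ and $c$ may each act hyperbolically (or one hyperbolically and one elliptically) on that tree, and roughly half of the paper's proof of Proposition~\ref{prop:n=4} is devoted to exactly these configurations (Cases 1 and 4 there), where $H$ stabilises a line or an end rather than a vertex and the geometry is entirely different. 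A proof that only treats the elliptic configuration is incomplete.

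Second, and more structurally, amalgamating over $\Z^2$ is the wrong edge group for a tree-based ping-pong in this setting: edge stabilisers of $\cT$ are free, so $H\cong\Z^2$ never fixes an edge of $\cT$ (indeed, by Lemma~\ref{lem:commuting-types}.\ref{it:commtypes-elliptic} it fixes at most a single vertex). Proposition~\ref{prop:PPv2} requires ping-pong sets $\Omega_1,\Omega_2$ that are \emph{both} preserved by the whole amalgamating subgroup, and the standard way to build them --- cutting $\cT$ along edges fixed by the edge group --- is unavailable here, since $b$ moves the first edge of the relevant geodesic that $c$ fixes. The paper avoids this by never using your decomposition: in the elliptic cases it amalgamates $\langle \underline{a},b,c\rangle\cong A(P_3)$ with $\langle c,\underline{d}\rangle\cong\Z^2$ over the \emph{cyclic} group $\langle c\rangle$, which does fix the whole separating segment, and in the remaining cases it abandons amalgams altogether and instead verifies (Lemma~\ref{lem:inj_on-xyz->inj}, via the Bestvina--Brady subgroup) that the three elements $\underline{a}^{-k}b^l$, $b^{-l}c^m$, $c^{-m}\underline{d}^{\,n}$ generate a free group of rank $3$, which is equivalent to injectivity of $\varphi$. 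Finally, your first step --- that condition (i) alone upgrades $\langle aHa^{-1},H\rangle$ to $aHa^{-1}*_{\langle b\rangle}H$ after passing to powers --- is essentially Proposition~\ref{prop:P3} and already requires its own case analysis on isometry types; you correctly flag it as the technical heart, but nothing in the proposal supplies it. In short: right toolkit, but the two mechanisms you actually name (vertex-fixing $H$, ping-pong over a $\Z^2$ edge group) fail in general, and the cases they miss are where the real work lies.
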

This proposition says that any homomorphism from $A(P_4)$ to a one-relator group $G$ that is injective on the union of  three abelian subgroups $\alpha \langle \beta, \gamma \rangle \alpha^{-1} \cup \langle \beta, \gamma \rangle \cup \delta^{-1} \langle \beta, \gamma \rangle \delta $    can be ``promoted'' to an injective embedding $A(P_4)\hookrightarrow G$ (see Proposition~\ref{prop:n=4}).

The proofs of Proposition~\ref{prop:P_4-enhanced} and Theorem~\ref{thm:P_n} rely on the action of a one-relator group $G$ on its Bass--Serre tree corresponding to a splitting of $G$ as an HNN-extension of some ``simpler'' one-relator group, as described in Subsection~\ref{ssec:prelim-1rel}. We analyse different cases that can occur, using the classification of isometries of trees, connectedness of $P_3$ and $P_4$ and various ping-pong lemmas.
However, if $\Gamma$ is disconnected and the diameters of its connected components are all small (less than $3$), these arguments do not work, as demonstrated by Example~\ref{ex:BS}. To get around this issue we study free products in one-relator groups.

Recall that a non-trivial group $G$ is said to be \emph{a generalised Baumslag--Solitar group} if it splits as the fundamental group of a finite graph of groups where all vertex and edge groups are infinite cyclic.

\begin{thm} \label{thm:one-relator-free-or-snormal}
Suppose that $G$ is a one-relator group that is not cyclic and not a generalised Baumslag--Solitar group. If $A_1,\dots,A_s \leqslant G$ are subgroups with non-trivial centres then there exists an infinite order  element $f \in G$ such that the subgroups $\langle f,A_i \rangle \leqslant G$ are naturally isomorphic to the free product $\langle f \rangle * A_i$, for all $i=1,\dots,s$.
\end{thm}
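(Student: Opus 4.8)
The plan is to make $G$ act on a tree $\cT$ and to find $f$ as a loxodromic element whose axis is in sufficiently general position with respect to $A_1,\dots,A_s$ that a single ping-pong argument simultaneously produces all the free-product decompositions $\langle f,A_i\rangle\cong\langle f\rangle*A_i$. When $G$ is freely decomposable one takes $\cT$ to be the Bass--Serre tree of a maximal free-product decomposition $G=G_1*F_r$ (with $r\geq 1$, edge stabilisers trivial); when $G$ is freely indecomposable one uses the splitting of $G$ as an HNN-extension $G=\langle K,t\mid t^{-1}Mt=M'\rangle$ over Magnus subgroups coming from the Magnus--Moldavanskii hierarchy (Subsection~\ref{ssec:prelim-1rel}). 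In both cases the aim is a non-elementary action, possessing loxodromic elements, whose edge stabilisers are free (hence torsion-free) and which is $k$-acylindrical for some $k$.

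The step I expect to be the main obstacle is verifying that such a ``good action'' exists, and this is where the non-cyclic and non-GBS hypotheses are used. The freely decomposable case is immediate, since the edge stabilisers are trivial and $G$ is not virtually cyclic. In the freely indecomposable case one argues that, because $G$ is not a generalised Baumslag--Solitar group, no one-relator group occurring in its hierarchy has non-trivial centre --- otherwise such a group would be $2$-generated with cyclic Magnus subgroups (by Murasugi's theorem on centres of one-relator groups), forcing $G$, which is joined to it by HNN-extensions over those cyclic subgroups, to be GBS --- and hence the Magnus subgroups over which one splits enjoy the (almost) malnormality established in the intersection theorems for Magnus subgroups; malnormality of the edge groups then gives acylindricity of the action on $\cT$ (or on a refinement of it obtained by blowing up vertex stabilisers further down the hierarchy). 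Squeezing acylindricity out of the non-GBS hypothesis, together with the hierarchy bookkeeping, is the technical heart of the proof.

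Granting the good action, I would next locate each $A_i$ inside $\cT$. Fix $1\neq z_i\in Z(A_i)$. If $z_i$ has finite order then, since centralisers of torsion elements in one-relator groups are cyclic, $A_i\leqslant \mathrm{C}_G(z_i)$ is finite cyclic and so elliptic. If $z_i$ has infinite order then every element of $A_i$ commutes with $z_i$, hence preserves $\ax(z_i)$ when $z_i$ is loxodromic and preserves the subtree $\fix(z_i)$ when $z_i$ is elliptic; in the latter case $\fix(z_i)$ is bounded by acylindricity, so $A_i$ fixes a vertex (after subdividing $\cT$ once, if necessary). In the former case $A_i$ preserves the line $L_i=\ax(z_i)$, and the kernel of $A_i\to\mathrm{Isom}(L_i)$ lies in an edge stabiliser (it fixes every edge of the infinite line $L_i$), hence is torsion-free, hence is finite by acylindricity, hence is trivial; so $A_i$ is infinite cyclic or infinite dihedral. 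Thus each $A_i$ is either elliptic, fixing a vertex $v_i$, or lineal, preserving a line $L_i$.

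It remains to choose $f$. Non-elementarity and acylindricity provide an abundance of loxodromic elements whose axes avoid any prescribed finite family of lines and bounded subtrees, so I would pick $f$ loxodromic with $\ax(f)$ disjoint from each lineal $L_i$ and meeting each $\fix(A_i)$, in the elliptic case, only at $v_i$ and in a direction moved freely by $A_i$. The existence of such a direction is automatic when edge stabilisers are trivial (the freely decomposable case), and otherwise follows because a nontrivial element of $A_i$ fixing a direction of $\ax(f)$ leaving $\fix(A_i)$ would be torsion-free and would fix a segment of $\fix(A_i)$-adjacent edges of length greater than $k$, hence be trivial by acylindricity, provided $\fix(A_i)$ is long enough; the remaining case of a short $\fix(A_i)$ needs a counting argument over the link of $v_i$, carried out uniformly in $i$, to produce a clean direction. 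With $f$ so chosen, disjoint half-tree ping-pong domains --- one around $\fix(A_i)$, or around a bridge joining $L_i$ to $\ax(f)$, for each $i$, played against the two half-trees about the ends of $\ax(f)$ --- yield $\langle f,A_i\rangle=\langle f\rangle*A_i$ for all $i$ at once, the $A_i$-domains being taken pairwise disjoint along $\ax(f)$; replacing $f$ by a power sharpens the estimates where needed. The two real difficulties are, as flagged, the structural reduction of the second paragraph and the uniform choice of clean directions in the elliptic case --- the latter being exactly the obstruction that occurs for solvable Baumslag--Solitar groups and their GBS relatives (cf.\ Example~\ref{ex:BS}), which the non-GBS hypothesis is there to rule out.
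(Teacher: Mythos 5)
Your overall strategy --- find one good acylindrical tree action and win every free product at once by ping-pong along a loxodromic axis in general position --- does not match the paper's proof and, more importantly, has a genuine gap that the paper's case division is specifically designed to avoid. The problem is the class of one-relator groups that are \emph{strictly ascending} HNN-extensions of finitely generated non-abelian free groups (case \ref{it:st_asc_HNN} of Proposition~\ref{prop:1-rel_ah-refinement}). Such groups are neither cyclic nor generalised Baumslag--Solitar groups, so the theorem must cover them, but their action on the Bass--Serre tree of the hierarchy splitting fixes an end of $\cT$: every hyperbolic element has its axis converging to that end, so axes of distinct hyperbolic elements share rays and cannot be put ``in general position'', the action is not acylindrical in any useful sense, and there is no ``abundance of loxodromic elements whose axes avoid any prescribed finite family of lines''. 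Your second paragraph tries to rule this out by deducing, from the non-GBS hypothesis, that the Magnus edge subgroups are (almost) malnormal; but that deduction does not go through. The non-GBS hypothesis has nothing to do with whether an HNN-extension in the hierarchy is strictly ascending, and Bagherzadeh's theorem (Proposition~\ref{prop:bagh}) only gives that intersections of distinct edge stabilisers at a vertex are \emph{cyclic}, not trivial --- which is far from malnormality or $k$-acylindricity. The paper has to handle the strictly ascending case by an entirely separate, algebraic argument (all of Section~\ref{sec:asc_HNN}): it exploits the normal subgroup $M=\ker\psi$, the intersection $N=\bigcap_n\varphi^n(F)$, principal elements, and Lemma~\ref{lem:free_prod_in_asc_HNN}, and the element $f$ it produces there is \emph{elliptic} (it lies in the base free group $F$), not loxodromic.

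In the remaining (acylindrically hyperbolic) case the paper also does not run ping-pong on the Bass--Serre tree; it works with acylindrical actions on hyperbolic spaces as a black box, shows $G$ has no non-trivial finite normal subgroup via Newman's results on one-relator groups with torsion, observes that each non-cyclic $A_i$ (having infinite centre) must be elliptic for \emph{every} acylindrical action (else it would itself be acylindrically hyperbolic, impossible with infinite centre), and then invokes Abbott--Dahmani to modify the action so that the cyclic $A_i$ become elliptic too and to extract the common element $f$. Your tree-based endgame (clean directions at $v_i$, disjoint half-tree domains) could plausibly be made to work \emph{if} you had a genuinely acylindrical non-elementary tree action, but you have not established one, and for the strictly ascending HNN subclass no such action on the hierarchy tree exists. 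To repair the proposal you would need to (i) isolate the strictly ascending case and give a separate argument for it, and (ii) either prove acylindricity of some tree action in the remaining cases or replace the tree ping-pong by the general acylindrical-hyperbolicity machinery, as the paper does.
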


\begin{rem}\label{rem:at_least_3_gens}
If $G$ is a group given by presentation \eqref{eq:1-rel_pres} with $k \ge 3$ then it is acylindrically hyperbolic by \cite[Corollary~2.6]{MinasyanOsin}, in particular it is neither cyclic nor a generalised Baumslag--Solitar group by \cite[Corollary~1.5]{Osin}, because any generalised Baumslag--Solitar group commensurates an infinite cyclic subgroup. Hence $G$ satisfies the assumptions of Theorem~\ref{thm:one-relator-free-or-snormal}.    
\end{rem}

Theorem~\ref{thm:one-relator-free-or-snormal} allows us to prove Theorem~\ref{thm:main} in the case when $\Gamma$ is disconnected, using the observation that $A(\Gamma)$ embeds into $A(P_{d+1})*\Z$, where $d \in \N$ is the maximum of the diameters of the connected components of $\Gamma$ (see Lemma~\ref{lem:free-products-embed}). 

Theorem~\ref{thm:one-relator-free-or-snormal} is interesting on its own right.
In \cite{Bek-Cow-Har} Bekka, Cowling and de la Harpe introduced the following property of a group $G$: $G$  has \emph{property $P_{nai}$} if for any finite subset $\{a_1,\dots,a_s\} \subseteq G\setminus\{1\}$ there is an infinite order element $f \in G$ such that the subgroup $\langle f,a_i \rangle$ is isomorphic to the free product $\langle f \rangle*\langle a_i \rangle$, for each $i=1,\dots, s$. This property is known for many groups, in particular, for all acylindrically hyperbolic groups without non-trivial finite normal subgroups \cite{AbbottDahmani}, and hence, for all one-relator groups with at least $3$ generators. 
Theorem~\ref{thm:one-relator-free-or-snormal} allows us to treat the case of $2$-generated one-relator groups.

\begin{cor}\label{cor:P_nai}
Let $G$ be a non-cyclic one-relator group. Then $G$ satisfies property $P_{nai}$ if and only if $G$ is not isomorphic to a generalised Baumslag--Solitar group. 
\end{cor}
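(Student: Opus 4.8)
The plan is to prove the two implications separately. The ``if'' direction is a direct application of Theorem~\ref{thm:one-relator-free-or-snormal}, while the ``only if'' direction is an elementary observation about generalised Baumslag--Solitar groups.

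Assume first that $G$ is a non-cyclic one-relator group which is not a generalised Baumslag--Solitar group, and let $\{a_1,\dots,a_s\} \subseteq G \setminus \{1\}$ be a finite subset. For each $i$ put $A_i := \langle a_i \rangle$. Since $a_i \ne 1$, this is a non-trivial abelian (in fact cyclic) subgroup, so its centre $A_i$ is non-trivial; hence $G$ together with $A_1, \dots, A_s$ satisfies the hypotheses of Theorem~\ref{thm:one-relator-free-or-snormal}. That theorem supplies an infinite order element $f \in G$ for which the natural map $\langle f \rangle * \langle a_i \rangle \to \langle f, a_i \rangle$ is an isomorphism for every $i$, which is exactly what property $P_{nai}$ requires. (Note that $a_i$ is allowed to have finite order here, as one-relator groups may have torsion, but this is harmless since $\langle a_i \rangle$ still has non-trivial centre.)

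For the converse I argue by contraposition, showing that a generalised Baumslag--Solitar group $G$ fails $P_{nai}$. By Remark~\ref{rem:at_least_3_gens}, $G$ commensurates an infinite cyclic subgroup $\langle c \rangle$ with $c$ of infinite order. If $G$ had property $P_{nai}$, applying the definition to the singleton $\{c\}$ would yield an infinite order element $f \in G$ making the natural map $\langle f \rangle * \langle c \rangle \to \langle f, c \rangle$ an isomorphism. Commensuration of $\langle c \rangle$ forces $\langle c \rangle \cap f \langle c \rangle f^{-1}$ to have finite index, say $m \geq 1$, in $f \langle c \rangle f^{-1} = \langle f c f^{-1} \rangle$, so that $f c^m f^{-1} = c^k$ for some $k \in \Z$; moreover $k \ne 0$ because $f c^m f^{-1}$ has infinite order. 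Inside the free product $\langle f \rangle * \langle c \rangle$, however, the left-hand side is a reduced word of syllable length $3$ (its middle syllable $c^m$ being non-trivial, as $m \geq 1$) whereas the right-hand side has syllable length $1$, so the two cannot be equal by the normal form theorem for free products. This contradiction finishes the proof.

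I do not anticipate any real obstacle: the whole content of the corollary is carried by Theorem~\ref{thm:one-relator-free-or-snormal} together with the known fact, recalled in Remark~\ref{rem:at_least_3_gens}, that generalised Baumslag--Solitar groups commensurate an infinite cyclic subgroup. The only points requiring mild care are verifying that cyclic subgroups --- including torsion ones --- are admissible inputs to Theorem~\ref{thm:one-relator-free-or-snormal}, and extracting the commensuration relation $f c^m f^{-1} = c^k$ with $k \ne 0$ in order to reach a contradiction via free-product normal forms.
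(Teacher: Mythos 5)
Your proof is correct and follows essentially the same route as the paper: the ``if'' direction is the direct application of Theorem~\ref{thm:one-relator-free-or-snormal} to the cyclic subgroups $\langle a_i\rangle$, and the ``only if'' direction is exactly the paper's remark that a generalised Baumslag--Solitar group commensurates an infinite cyclic subgroup, which is incompatible with $P_{nai}$ by the free-product normal form argument you spell out. The only difference is that you write out the details the paper leaves implicit (including the harmless observation that finite-order $a_i$ are admissible), and these details are all accurate.
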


Any generalised Baumslag--Solitar group commensurates an infinite cyclic subgroup, whence it does not have $P_{nai}$.  Thus Corollary~\ref{cor:P_nai} completes the description of one-relator groups with $P_{nai}$.

Historically, $P_{nai}$ was introduced as a condition that implies the simplicity of the reduced $C^*$-algebra of the group \cite{Bek-Cow-Har}. And although much more powerful criteria for $C^*$-simplicity have recently been discovered  by Kalantar and Kennedy \cite{Kal-Ken} and Breuillard, Kalantar, Kennedy and Ozawa \cite{BKKO}, the following result seems to be new.

\begin{cor}\label{cor:C*-simple}
Let $G$ be a non-cyclic one-relator group given by \eqref{eq:1-rel_pres}. Then $G$ is not $C^*$-simple if and only if $k=2$ and at least one of the following holds:
\begin{enumerate}[label=\rm(\roman*)]
    \item \label{it:C*-BS1n} $G$ is isomorphic to a solvable Baumslag--Solitar group $BS(1,n)=\langle a,t \mid tat^{-1}=a^n \rangle$, for some $n \in \Z\setminus\{0\}$;
    \item \label{it:C*-unimod} $G$ is a unimodular generalised Baumslag--Solitar group. In this case $G$ contains an infinite cyclic normal subgroup and a finite index subgroup $K \cong F \times \Z$, for some finitely generated free group $F$.
\end{enumerate}
\end{cor}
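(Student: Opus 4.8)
The plan is to split according to the number $k$ of generators and, in the case $k=2$, according to whether or not $G$ is a generalised Baumslag--Solitar group, relying on Corollary~\ref{cor:P_nai} together with the implication ``property $P_{nai}$ $\Rightarrow$ $C^*$-simplicity'' of Bekka--Cowling--de la Harpe~\cite{Bek-Cow-Har}. If $k\le 1$ then $G$ is cyclic, contrary to hypothesis. If $k\ge 3$ then, as recorded just before Corollary~\ref{cor:P_nai}, $G$ is acylindrically hyperbolic without non-trivial finite normal subgroups, hence has $P_{nai}$, hence is $C^*$-simple; since moreover $k\ne 2$, both sides of the asserted equivalence fail and so it holds. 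So assume $k=2$. Again by Corollary~\ref{cor:P_nai}, if $G$ is not a generalised Baumslag--Solitar group then $G$ has $P_{nai}$ and is therefore $C^*$-simple, while neither (i) nor (ii) can hold because solvable Baumslag--Solitar groups and unimodular generalised Baumslag--Solitar groups are themselves generalised Baumslag--Solitar groups; hence the equivalence holds in this case too. It therefore remains to treat a two-generator one-relator generalised Baumslag--Solitar group $G$ and to show that it fails to be $C^*$-simple precisely when (i) or (ii) holds.

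One direction of this is easy. If $G\cong BS(1,n)$ with $n\in\Z\setminus\{0\}$ then $G$ is metabelian, hence amenable; being infinite, it is not $C^*$-simple. If $G$ is a unimodular generalised Baumslag--Solitar group then, by the structure theory of such groups, $G$ possesses a non-trivial (necessarily infinite cyclic) normal subgroup and a finite-index subgroup isomorphic to $F\times\Z$ for some finitely generated free group $F$ --- this gives the structural addendum stated in~(ii) --- and since a group with non-trivial amenable radical is never $C^*$-simple~\cite{BKKO}, $G$ is not $C^*$-simple.

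The heart of the matter is the converse: a two-generator one-relator generalised Baumslag--Solitar group $G$ that is neither of the form $BS(1,n)$ nor unimodular must be $C^*$-simple. Here I would work with the action of $G$ on the Bass--Serre tree $T$ of a splitting exhibiting $G$ as a generalised Baumslag--Solitar group. Since elliptic elements lie in conjugates of vertex groups, on which the modular homomorphism is trivial, non-unimodularity of $G$ is witnessed by a hyperbolic element; consequently the action on $T$ is faithful (a non-trivial kernel would be an infinite cyclic normal subgroup, and a generalised Baumslag--Solitar group with a non-trivial cyclic normal subgroup is unimodular), and, as $G$ is not virtually cyclic and --- not being isomorphic to any $BS(1,n)$ --- does not fix an end of $T$, the action is of general type. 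It follows that the amenable radical of $G$ is trivial: a non-trivial amenable normal subgroup $N$ can contain no hyperbolic element, since the normal closure of such an element already contains a non-abelian free subgroup by ping-pong; so $N$ is elliptic and fixes either a vertex of $T$ --- whence $N$ lies in the intersection of all vertex stabilisers, which is trivial by faithfulness --- or an end of $T$, which by normality of $N$ would then be fixed by all of $G$, forcing $G\cong BS(1,n)$. To pass from triviality of the amenable radical to $C^*$-simplicity I would invoke the intrinsic characterisations of $C^*$-simplicity~\cite{Kal-Ken,BKKO}: it suffices to rule out a non-trivial amenable uniformly recurrent subgroup of $G$, and one argues that a generalised Baumslag--Solitar group acting faithfully and of general type on a tree with infinite cyclic vertex and edge stabilisers admits no such subgroup (alternatively, one quotes the known classification of $C^*$-simple generalised Baumslag--Solitar groups as the non-amenable non-unimodular ones). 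I expect this final step --- excluding a non-trivial amenable uniformly recurrent subgroup rather than merely a non-trivial amenable normal subgroup --- to be the main obstacle, since for general groups acting on trees these two conditions differ; the point will be that the tree action of a generalised Baumslag--Solitar group is sufficiently rigid to preclude the relevant pathologies.
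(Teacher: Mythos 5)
Your overall architecture is the same as the paper's: the reduction to the case of a $2$-generated generalised Baumslag--Solitar group via property $P_{nai}$ (Corollary~\ref{cor:P_nai}, i.e.\ Theorem~\ref{thm:one-relator-free-or-snormal} plus Remark~\ref{rem:at_least_3_gens} and \cite{Bek-Cow-Har}), and the easy direction via a non-trivial amenable normal subgroup, are exactly what the paper does. Your tree-theoretic reductions for a non-unimodular GBS group $G\not\cong BS(1,n)$ --- faithfulness of the action (a non-trivial kernel would be a cyclic normal subgroup, forcing unimodularity by \cite[Proposition~2.6]{Levitt}), exclusion of a fixed end and of an invariant line, and triviality of the amenable radical --- also correctly reproduce the first half of the paper's Proposition~\ref{prop:GBS-C*-simple} (modulo the small point that one should first pass to a \emph{reduced} graph of groups so that the action on the Bass--Serre tree is minimal, which is needed both for the ``fixed end $\Rightarrow BS(1,n)$'' step and for your ``elliptic normal subgroup lies in the kernel'' step).

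The genuine gap is the final step, which you yourself flag: triviality of the amenable radical does not imply $C^*$-simplicity, and you do not actually rule out a non-trivial amenable uniformly recurrent subgroup, nor do you supply a citable classification --- you only assert that ``one argues'' this or that one ``quotes the known classification''. This is precisely the content that the paper has to assemble in Proposition~\ref{prop:GBS-C*-simple}: after the reductions, the action of $G$ on its Bass--Serre tree is \emph{strongly hyperbolic} in the sense of \cite[Proposition~14]{dlH-Pre}, and the key input is \cite[Corollary~7.11]{BMPST}, which says that for a \emph{non-unimodular} GBS group the induced action on the boundary $\partial\cT$ is topologically free (equivalently, the tree action is \emph{slender}); one then concludes $C^*$-simplicity from \cite[Corollaries~15 and~2]{dlH-Pre}. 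Without this (or an equivalent URS/boundary-action argument in the spirit of \cite{Kal-Ken,BKKO}), the hard implication --- that a $2$-generated one-relator GBS group satisfying neither \ref{it:C*-BS1n} nor \ref{it:C*-unimod} is $C^*$-simple --- is not established. So the proposal is a correct skeleton with the decisive analytic step missing.
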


Corollary~\ref{cor:C*-simple} gives a complete characterisation of $C^*$-simplicity for  one-relator groups. 
In view of Corollary~\ref{cor:P_nai}, the proof of Corollary~\ref{cor:C*-simple} amounts to investigating which generalised Baumslag--Solitar groups are $C^*$-simple. We do this in Proposition~\ref{prop:GBS-C*-simple}, using work of de la Harpe and Pr\'{e}aux \cite{dlH-Pre}, as well as Brownlowe, Mundey, Pask, Spielberg and Thomas \cite{BMPST}.

Corollary~\ref{cor:C*-simple} can be made effective, i.e., given a presentation \eqref{eq:1-rel_pres} of a group $G$ it is possible to decide whether or not $G$ is $C^*$-simple: see Remark~\ref{rem:deciding_if_C*-simpl}.

In Section~\ref{sec:prelims}, we review the necessary preliminaries on group actions on trees, one-relator groups, trace monoids and right-angled Artin groups. We prove the cases $n=3$ and $n=4$ of Theorem~\ref{thm:P_n} in Sections \ref{sec:P_3}~and~\ref{sec:P_4}, respectively. In Section~\ref{sec:asc_HNN} we establish a version of Theorem~\ref{thm:one-relator-free-or-snormal} for strictly ascending HNN-extensions of free groups, which is the main new ingredient in the proof of this theorem in Section~\ref{sec:one-rel}. We prove Theorem~\ref{thm:main}  in Section~\ref{sec:ThmMain}, and in Section~\ref{sec:F2F2F2} we give various examples showing that generators of $T(P_n)$ need not generate a copy of $A(P_n)$ in a one-relator group; we also prove that $T(P_4)$ embeds into the direct cube of the free monoid of rank $2$ (Lemma~\ref{lem:F2F2F2}), which allows us to exhibit examples of groups containing $T(P_4)$ but not containing $A(P_4)$. Finally, in Section~\ref{sec:C*-simple}
we study the $C^*$-simplicity of generalised Baumslag--Solitar groups and prove Corollary~\ref{cor:C*-simple}.

\subsection*{Acknowledgements}
The authors would like to thank Robert Gray and Alexander Zakharov for valuable discussions, and the anonymous referee for useful comments. The second author was partially supported by the National Science Centre (Poland) grant 2022/47/D/ST1/00779.

\section{Preliminaries}\label{sec:prelims}

\subsection{Group actions on trees}\label{subsec:trees}

One of the main tools in this paper is the theory of group actions on (simplicial) trees, see \cite[Section~I.4]{DD} for the basics of this theory. Let $G$ be a group acting by isometries on a tree $\cT$. We will always assume that this action does not invert any edges of $\cT$. Then every isometry of $\cT$ is either \emph{hyperbolic} or \emph{elliptic} (see, for example, \cite[Proposition~I.4.11]{DD}). A hyperbolic element $g \in G$ has infinite order and possesses a unique \emph{axis} $\ax(g) \subseteq \cT$, which is an embedded simplicial line on which $g$ acts by translation. The \emph{translation length} $\|g\| \in \mathbb{N}$, of $g$, is the distance by which every vertex of $\ax(g)$ is shifted by $g$ (here $\cT$ is equipped with the standard edge-path metric).
Any element commuting with $g$ must preserve $\ax(g)$ setwise.

An elliptic element $g \in G$ fixes at least one vertex of $\cT$, and we will use $\fix(g)$ to denote the unique non-empty maximal subtree of $\cT$ fixed by $g$ pointwise. Again, any element of $G$ commuting with $g$ will leave $\fix(g)$ invariant.
The relationships between axes and fixed points of commuting elements are summarised in the following statement.

\begin{lemma} \label{lem:commuting-types} Suppose that $G$ is a group acting on a tree $\cT$, and $g,h \in G$ are two commuting elements. Then the following hold.
\begin{enumerate}[label=\rm(\roman*)]
\item \label{it:commtypes-preserved} The element $g$ preserves $\operatorname{Min}(h)$ setwise, where $\operatorname{Min}(h) = \fix(h)$ if $h$ is elliptic and $\operatorname{Min}(h) = \ax(h)$ if $h$ is hyperbolic.
\item \label{it:commtypes-hyperbolic} If $g$ and $h$ are hyperbolic then $\ax(g) = \ax(h)$.
\item \label{it:commtypes-hypell} If $g$ is hyperbolic and $h$ is elliptic then $\ax(g) \subseteq \fix(h)$.
\item \label{it:commtypes-elliptic} If $g$ and $h$ are elliptic then $\fix(g) \cap \fix(h) \neq \varnothing$. Furthermore, if the $G$-stabilisers of edges in $\cT$ are free and $\langle g,h \rangle$ is not cyclic, then $\fix(g) \cap \fix(h)$ is a single vertex of $\cT$.

\end{enumerate}
\end{lemma}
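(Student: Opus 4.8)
The plan is to treat the four cases in order, exploiting the standard fact that for any isometry $g$ the set $\operatorname{Min}(g)$ (the fixed subtree if $g$ is elliptic, the axis if $g$ is hyperbolic) is $G$-equivariant in the sense that $k\operatorname{Min}(g)k^{-1}=\operatorname{Min}(kgk^{-1})$ for all $k\in G$; see \cite[Section~I.4]{DD}. Part~\ref{it:commtypes-preserved} is then immediate: since $g$ commutes with $h$, we have $g\operatorname{Min}(h)=\operatorname{Min}(ghg^{-1})=\operatorname{Min}(h)$. This is the engine for the rest.

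For part~\ref{it:commtypes-hyperbolic}, both $\ax(g)$ and $\ax(h)$ are $g$-invariant by \ref{it:commtypes-preserved} (applied to the pair $(g,h)$ and to $(g,g)$). Two distinct bi-infinite lines in a tree meet in a (possibly empty) bounded segment; if $\ax(g)\ne\ax(h)$ then their intersection is a finite segment or a point, which cannot be invariant under the hyperbolic translation $g$ unless it is empty, so $\ax(g)\cap\ax(h)=\varnothing$. But then I would derive a contradiction: the bridge between the two axes is a finite path $p$, and $g$ fixes the endpoint of $p$ on $\ax(h)$ setwise-wise—more carefully, $g$ maps the (unique) closest point of $\ax(h)$ to $\ax(g)$ to itself, forcing that point to lie on $\ax(g)$, contradiction; equivalently, one uses that two commuting hyperbolics generate a group acting on the minimal invariant subtree which must be a single line. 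Hence $\ax(g)=\ax(h)$. For part~\ref{it:commtypes-hypell}: by \ref{it:commtypes-preserved}, $h$ preserves $\ax(g)$; an elliptic element preserving a line either fixes it pointwise or acts as a reflection, but reflections invert edges, which we have excluded, so $h$ fixes $\ax(g)$ pointwise, i.e.\ $\ax(g)\subseteq\fix(h)$.

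Part~\ref{it:commtypes-elliptic} is where the real work lies. For the first assertion, suppose $\fix(g)\cap\fix(h)=\varnothing$. Let $p$ be the geodesic bridge from $\fix(g)$ to $\fix(h)$, with endpoints $u\in\fix(g)$, $v\in\fix(h)$. Since $g$ fixes $u$ and preserves $\fix(h)$ (by \ref{it:commtypes-preserved}), $g$ preserves the bridge, hence fixes $v$; then $v\in\fix(g)\cap\fix(h)$, a contradiction. So the intersection is non-empty. For the "furthermore" clause, assume edge stabilisers are free and $\langle g,h\rangle$ is non-cyclic; I want to show $\fix(g)\cap\fix(h)$ is a single vertex. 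If it contained an edge $e$, then $g,h\in G_e$, the stabiliser of $e$, which is free by hypothesis; but a free group is ordered and hence has the property that any two of its elements with a common "root" commute only if... — here I'd rather argue: $\langle g,h\rangle\le G_e$ is a subgroup of a free group, hence free, and it is generated by two commuting elements, so it is abelian; a free abelian group is infinite cyclic (or trivial), contradicting that $\langle g,h\rangle$ is non-cyclic. Therefore $\fix(g)\cap\fix(h)$ contains no edge and is a single vertex. The main obstacle is getting the tree-combinatorics in parts \ref{it:commtypes-hyperbolic} and \ref{it:commtypes-elliptic} exactly right—particularly the bridge arguments and the exclusion of edge-inversions—rather than any deep input; once \ref{it:commtypes-preserved} is in hand the rest is careful bookkeeping about how isometries act on segments and lines in a tree.
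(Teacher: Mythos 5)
Your overall structure is the standard one, and it is essentially what the paper relies on: the paper states this lemma as a summary of well-known facts and only justifies the ``furthermore'' clause of \ref{it:commtypes-elliptic}, via exactly the observation you make (a subgroup of a free edge stabiliser generated by two commuting elements is abelian and free, hence cyclic). Your equivariance argument for \ref{it:commtypes-preserved}, the bridge argument for the first half of \ref{it:commtypes-elliptic}, and the edge-stabiliser argument for the second half are all correct.

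There are, however, two incorrect justifications in \ref{it:commtypes-hyperbolic} and \ref{it:commtypes-hypell}. First, the claim that two distinct bi-infinite lines in a tree meet in a bounded segment is false: they may share a ray (take three rays $R_1,R_2,R_3$ issuing from one vertex; the lines $R_1\cup R_2$ and $R_1\cup R_3$ intersect in $R_1$). This is harmless, because the same invariance argument disposes of the ray case: a sub-ray $R$ of $\ax(g)$ with $g\,R=R$ would force $g$ to fix the endpoint of $R$, contradicting hyperbolicity. The second error is more substantive. In \ref{it:commtypes-hypell} you rule out the possibility that the elliptic element $h$ reflects $\ax(g)$ by asserting that ``reflections invert edges''. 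That is false for a reflection about a \emph{vertex} $v$ of the line: such a reflection swaps the two edges of the line incident to $v$ with each other and inverts no edge of $\cT$ (an edge $\{v,w\}$ is inverted only if $v$ and $w$ are swapped, which cannot happen when $v$ is fixed). The correct way to exclude the reflection is to use commutativity: if $h$ reversed the orientation of $\ax(g)$, then $hgh^{-1}$ would translate $\ax(g)$ in the direction opposite to $g$, contradicting $hgh^{-1}=g$. Alternatively, and more cleanly, prove \ref{it:commtypes-hypell} directly from \ref{it:commtypes-preserved}: $\fix(h)$ is a non-empty $g$-invariant subtree, and every non-empty $g$-invariant subtree contains the minimal one, namely $\ax(g)$; this avoids the case analysis on $h|_{\ax(g)}$ altogether.
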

The second part of claim \ref{it:commtypes-elliptic} follows from the fact that free groups do not contain non-cyclic abelian subgroups.

Recall that an \emph{end} of a tree $\cT$ is an equivalence class of (infinite geodesic) rays starting at  vertices of $\cT$, where two such rays are equivalent if their intersection is another ray. The \emph{boundary} $\partial\cT$ is defined as the set of all ends of $\cT$. Clearly the action of $G$ on $\cT$ naturally induces a $G$-action on $\partial \cT$. Given any $\varepsilon \in \partial \cT$, any ray from this equivalence class is said to \emph{converge to $\varepsilon$}. Observe that for any vertex $v$ of $\cT$ and any end $\varepsilon \in \partial \cT$ there exists a unique ray starting at $v$ and converging to $\varepsilon$. 

The next lemma is an elaboration of the general statement about a group acting on a tree with a fixed end (see, for example, \cite[Lemma~4.9]{MinasyanOsin}), adapted to the context of this paper.

\begin{lemma}\label{lem:fixed_end} Let $G$ be a group acting on a tree $\cT$ with free edge stabilisers, and let
$A$ be a subgroup of $G$ fixing some end of $\cT$. Suppose that there exists an elliptic element $z \in A \setminus\{1\}$ such that $z$ is central in $A$. Then
\begin{enumerate}[label=\rm(\roman*)]
    \item \label{it:fixedend-normal} the elliptic elements of $A$ form a normal subgroup $N \lhd A$ and this subgroup is torsion-free and locally cyclic;
    \item \label{it:fixedend-Z^2} if $A$ contains a hyperbolic element then $N \cong \Z$ and $A \cong \Z^2$.
\end{enumerate}
\end{lemma}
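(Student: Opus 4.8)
The plan is to work with the structure of a group fixing an end of a tree, exploiting the fact that such a group admits a "Busemann-type" homomorphism recording translation along the fixed end. Fix the end $\varepsilon \in \partial\cT$ fixed by $A$. For any two rays $r_1, r_2$ converging to $\varepsilon$ they eventually coincide, so for each $g \in A$ and each ray $r$ converging to $\varepsilon$ the signed "displacement" of $g$ along $\varepsilon$ is well-defined; this yields a homomorphism $\phi \colon A \to \Z$. An element $g \in A$ is elliptic if and only if $\phi(g) = 0$: indeed a hyperbolic element has $\phi(g) = \pm\|g\| \neq 0$ since one of the two ends of its axis must be $\varepsilon$ (it cannot fix $\varepsilon$ otherwise), and conversely if $\phi(g) = 0$ then $g$ fixes a point of any ray converging to $\varepsilon$ (the point past which $g$ acts as the identity on the ray), so $g$ is elliptic. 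Hence $N = \ker\phi$ is precisely the set of elliptic elements, and it is normal, proving the first half of \ref{it:fixedend-normal}.

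Next I would show $N$ is torsion-free and locally cyclic. Torsion-freeness: a non-trivial torsion element fixing an end would have to fix the whole ray converging to that end from any fixed vertex (an elliptic isometry of finite order fixing two points fixes the geodesic between them), hence it stabilises infinitely many edges along that ray and thus lies in an edge stabiliser — but edge stabilisers are free, hence torsion-free, a contradiction. For local cyclicity, take any $g, h \in N$; both are elliptic, so by Lemma~\ref{lem:commuting-types}\ref{it:commtypes-elliptic} applied to the commuting pair $\{g, z\}$ and $\{h, z\}$ (using that $z$ is central in $A$), we get $\fix(g) \cap \fix(z) \neq \varnothing$ and $\fix(h) \cap \fix(z) \neq \varnothing$; more usefully, since $g, h, z$ all fix $\varepsilon$, the rays from a common fixed vertex to $\varepsilon$ must agree past some point, so $\fix(g) \cap \fix(h)$ contains a sub-ray converging to $\varepsilon$, in particular contains an edge $e$. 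Then $g, h \in \st_G(e)$, which is free, and any two elements of a free group generating a group with non-trivial centre... — more directly, I want $\langle g, h\rangle$ cyclic. If $\langle g,h\rangle$ were non-cyclic it would be a non-abelian free group (subgroup of the free group $\st_G(e)$), but $z$ centralises it; a non-trivial element cannot centralise a non-abelian free group, contradiction. (If $z \notin \st_G(e)$ I enlarge the common fixed sub-ray so that the shared edge also lies in $\fix(z)$, which is possible since all three fix $\varepsilon$.) Hence $\langle g, h \rangle$ is cyclic, so $N$ is locally cyclic. Being torsion-free and locally cyclic, $N$ embeds in $\mathbb{Q}$.

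For \ref{it:fixedend-Z^2}, suppose $A$ contains a hyperbolic element $t$, so $\phi(t) \neq 0$ and $A/N \cong \phi(A)$ is a non-trivial subgroup of $\Z$, hence infinite cyclic. I claim $N \cong \Z$: the central element $z \in N$ is a hyperbolic... no, $z$ is elliptic; instead I use that $t$ preserves $\operatorname{Min}(z) = \fix(z)$ by Lemma~\ref{lem:commuting-types}\ref{it:commtypes-preserved} (as $z$ is central, $t$ commutes with $z$), and $t$ is hyperbolic, so by \ref{it:commtypes-hypell}, $\ax(t) \subseteq \fix(z)$. Now for any $g \in N$, again $t$ preserves $\fix(g)$; if $\fix(g)$ contained $\ax(t)$ for all $g$... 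Let me instead argue: pick an edge $e$ on $\ax(t) \subseteq \fix(z)$. For $g \in N$ arbitrary, $\fix(g) \cap \fix(z)$ contains a ray to $\varepsilon$ as above, and translating by a suitable power of $t$ (which stabilises $\fix(z)$ and moves points toward $\varepsilon$) we may assume this ray contains $e$, so $g \in \st_G(e)$; but also $z \in \st_G(e)$, and $\st_G(e)$ is free. Thus $N$ is a locally cyclic subgroup of a free group, hence cyclic, and being infinite (it contains $z \neq 1$ of infinite order by torsion-freeness) we get $N \cong \Z$. Finally $A$ is an extension of $\Z \cong N$ by $\Z \cong A/N$ with $N$ central, so $A$ is a finitely generated abelian group of rank $2$, torsion-free (any torsion would map into $A/N \cong \Z$, hence lie in $N \cong \Z$, hence be trivial), so $A \cong \Z^2$.

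The main obstacle I anticipate is the bookkeeping needed to guarantee that a \emph{common} edge lies simultaneously in $\fix(g)$, $\fix(h)$ (or $\fix(z)$, or on $\ax(t)$) — i.e., carefully using that all relevant elements fix the same end $\varepsilon$ so that their fixed subtrees share an arbitrarily long ray toward $\varepsilon$, and that powers of the hyperbolic element $t$ can be used to slide a chosen edge into the overlap. Once that is set up cleanly, the appeals to freeness of edge stabilisers (no torsion, no non-abelian free subgroup with non-trivial centraliser, locally cyclic subgroups are cyclic) do the rest.
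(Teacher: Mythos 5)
Part \ref{it:fixedend-normal} of your argument is correct and is essentially the paper's proof in different packaging: your Busemann homomorphism $\phi$ encodes the observation that elliptic elements of $A$ eventually fix every ray to $\varepsilon$ (so products of elliptics are elliptic), and your use of a common far-out edge together with the central element $z$ to force local cyclicity is exactly the paper's ascending union of edge stabilisers, each free with non-trivial centre and hence infinite cyclic.

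The gap is in part \ref{it:fixedend-Z^2}, at the step ``translating by a suitable power of $t$ we may assume this ray contains $e$, so $g \in \st_G(e)$''. Extending the ray $R \subseteq \fix(g)\cap\fix(z)$ backwards by applying $t^{-j}$ so that it reaches $e$ produces a ray lying in $\fix(t^{-j}gt^{j})$, not in $\fix(g)$; what you actually obtain is only that $g$ fixes $t^{j}e$ for some $j=j(g)\geq 0$ depending on $g$. Hence the correct conclusion is $N=\bigcup_{j\geq 0}\st_A(t^{j}e)$, an ascending union of infinite cyclic groups, which a priori need not be cyclic: in $BS(1,2)$ the analogous union is $\mathbb{Z}[1/2]$, locally cyclic but not finitely generated (that example is consistent with the lemma only because $BS(1,2)$ has no central elliptic element). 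So ``$N$ is a locally cyclic subgroup of a free group'' is not established, and $N\cong\Z$ does not yet follow. You must invoke the centrality of $z$ a second time to show the chain stabilises. The paper does this by writing $\st_A(e)=\langle w\rangle$, observing $\st_A(e)\subseteq t\st_A(e)t^{-1}$ gives $w=tw^{n}t^{-1}$ for some $n$, and then using $tzt^{-1}=z$ with $z\in\langle w\rangle\setminus\{1\}$ to force $n=1$, so that the chain is constant and $N=\langle w\rangle$. Alternatively, within your framework: $t$ acts by conjugation on the torsion-free locally cyclic group $N\hookrightarrow\mathbb{Q}$ as multiplication by some $q\in\mathbb{Q}^{*}$ fixing $z\neq 0$, so $q=1$; then every $g\in N$ commutes with $t$, hence $\ax(t)\subseteq\fix(g)$ by Lemma~\ref{lem:commuting-types}.\ref{it:commtypes-hypell}, and the containment $N\leqslant\st_G(e)$ you wanted does hold. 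Finally, your closing assertion that $N$ is central in $A$ also needs a one-line justification: conjugation by $t$ is an automorphism of $N\cong\Z$ fixing $z\neq 1$, hence trivial.
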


\begin{proof} Let $\varepsilon \in \partial \cT$ be the end fixed by $A$. Let $N$ denote the set of all elliptic elements in $A$. Observe that any elliptic element $h \in A$ pointwise fixes the ray starting at an arbitrary vertex $v \in \fix(h)$ and converging to $\varepsilon$. Therefore any two elliptic elements $g,h \in A$ stabilise pointwise some ray converging to $\varepsilon$, so the product $gh$ is also elliptic. Since inverses and conjugates of elliptic elements are elliptic we can conclude that  $N$ is a normal subgroup of $A$.

Choose any ray $\mathcal R$  converging to $\varepsilon$ in $\cT$ and let $e_1,e_2,\dots$ be the edges of this ray in the order of their appearance from the origin of the ray. Then the pointwise stabiliser $\st_A(e_i)$ is contained in $\st_A(e_j)$, for any $j \ge i$, because $A$ fixes the end $\varepsilon$. Any elliptic element of $A$ fixes all but finitely many edges of $\mathcal R$, which implies that $N$ is the ascending union
\begin{equation}\label{eq:struct_of_N}
N=\bigcup_{i=1}^\infty \st_A(e_i).    
\end{equation}
For all sufficiently large $i \in \N$, $\st_A(e_i)$ is free and contains the non-trivial central element $z$, hence this stabiliser must be infinite cyclic, and so \eqref{eq:struct_of_N} shows that $N$ is torsion-free and locally cyclic, proving part~\ref{it:fixedend-normal}.

Now, suppose that $A$ contains at least one hyperbolic element. Let $h \in A$ be a hyperbolic element with minimal translation length. For any other hyperbolic element $g \in A$ the intersection $\ax(g) \cap \ax(h)$ must contain a ray $\mathcal S$ converging to $\varepsilon$ in $\cT$ (because hyperbolic elements only fix the ends of their axes in $\partial \cT$). If the translation length $\|h\|$ does not divide the translation length $\|g\|$ then we can find $m, n \in \Z\setminus\{0\}$ such that $m\|h\|+n\|g\|=\mathrm{gcd}(\|h\|,\|g\|)<\|h\|$, so the element $h^mg^n \in A$ is hyperbolic with translation length strictly smaller than $\|h\|$, contradicting the choice of $h$. Therefore $\|h\|$ must divide $\|g\|$, so there exists $m \in \Z$ such that the element $h^m g$ fixes all but finitely many edges of the ray $\mathcal S$, and hence it belongs to~$N$. Thus we have shown that $g \in \langle h \rangle N$, and therefore $A=\langle h \rangle N$.

Let $\mathcal R$ be any ray contained in $\ax(h)$ and converging to $\varepsilon$. After replacing $h$ with its inverse, if necessary, we can assume that $h$ translates  $\mathcal{R}$ into itself. Let $e_1$ be the first edge of $\mathcal R$. The argument above \eqref{eq:struct_of_N} shows that 
\begin{equation}  \label{eq:nested}
 \st_A(e_1) \subseteq \st_A(h\, e_1)= h\st_A(e_1)h^{-1}, \text{ and }   
\end{equation}
\begin{equation} \label{eq:refined_struc_of_N}
N=\bigcup_{j=0}^\infty \st_A(h^j\, e_1)=\bigcup_{j=0}^\infty h^j \st_A(e_1) h^{-j}.    
\end{equation}

Since  $z \in \st_A(\ax(h))\subseteq \st_A(e_1)$ by Lemma~\ref{lem:commuting-types}, $\st_A(e_1)$ is a free group with non-trivial centre, so it must be infinite cyclic, generated by some $w \in A$. Inclusion \eqref{eq:nested} shows that $w=hw^nh^{-1}$, for some $n \in \Z\setminus\{0\}$. Since $h$ commutes with $z \in \langle w \rangle \setminus\{1\}$, we can deduce that $n=1$, i.e., $h$ centralizes $\langle w \rangle=\st_A(e_1)$. Equation \eqref{eq:refined_struc_of_N} now yields that $N=\langle w \rangle \cong \Z$, whence $A=\langle h \rangle \langle w \rangle \cong \Z^2$, proving part~\ref{it:fixedend-Z^2}.
\end{proof}

\begin{lemma}\label{lem:bounded_intersec_of_axes} Consider a group $G$ acting on a tree $\cT$ with free edge stabilisers. Suppose that $a,c \in G$ are hyperbolic elements and $b \in G$ is an elliptic element commuting with $a$ and $c$. If $\langle a,b \rangle \cong \Z^2$ and $\langle a,b \rangle \cap \langle c \rangle=\{1\}$ in $G$ then $\ax(a) \cap \ax(c)$ is bounded in $\cT$.   
\end{lemma}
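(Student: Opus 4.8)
The plan is to argue by contradiction. Assuming $\ax(a)\cap\ax(c)$ is unbounded, I will extract a common fixed end of the subgroup $A:=\langle a,b,c\rangle$, apply Lemma~\ref{lem:fixed_end} to force $A\cong\Z^2$, and then derive a contradiction with the hypothesis $\langle a,b\rangle\cap\langle c\rangle=\{1\}$.

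\textbf{Step 1: geometric set-up.} Since $b$ is elliptic and commutes with the hyperbolic elements $a$ and $c$, Lemma~\ref{lem:commuting-types}\ref{it:commtypes-hypell} gives $\ax(a)\subseteq\fix(b)$ and $\ax(c)\subseteq\fix(b)$. The set $\ax(a)\cap\ax(c)$, being an intersection of two subtrees of $\cT$, is itself a subtree, and as it is contained in the line $\ax(a)$ it is a subpath of $\ax(a)$. If this subpath is unbounded, it contains an infinite ray $\mathcal R$; let $\varepsilon\in\partial\cT$ be the end to which $\mathcal R$ converges. Because $\mathcal R$ is a sub-ray of the isometrically embedded line $\ax(a)$ and $a$ translates along $\ax(a)$, the element $a$ fixes $\varepsilon$; likewise $c$ fixes $\varepsilon$ (as $\mathcal R$ is a sub-ray of $\ax(c)$); and since $\mathcal R\subseteq\fix(b)$, the element $b$ fixes $\mathcal R$ pointwise, hence fixes $\varepsilon$. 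Thus $A=\langle a,b,c\rangle$ fixes $\varepsilon$. Note that no case split between $\ax(a)=\ax(c)$ and a merely one-sided overlap is needed: in either situation the intersection contains such a ray.

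\textbf{Step 2: invoking Lemma~\ref{lem:fixed_end} and concluding.} The element $b$ is elliptic, non-trivial (it has infinite order since $\langle a,b\rangle\cong\Z^2$), and central in $A$ because it commutes with each of the generators $a,b,c$. As $A$ contains the hyperbolic element $a$, part~\ref{it:fixedend-Z^2} of Lemma~\ref{lem:fixed_end} yields $A\cong\Z^2$. Then $\langle a,b\rangle\cong\Z^2$ is a rank-$2$ subgroup of $A\cong\Z^2$ and hence has finite index, say $N:=[A:\langle a,b\rangle]$. Consequently $c^N\in\langle a,b\rangle$, so $c^N\in\langle a,b\rangle\cap\langle c\rangle=\{1\}$, forcing $c^N=1$; this contradicts the fact that the hyperbolic element $c$ has infinite order. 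Therefore $\ax(a)\cap\ax(c)$ is bounded.

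\textbf{Main obstacle.} I do not expect a serious difficulty: the argument is essentially two applications of Lemmas~\ref{lem:commuting-types} and~\ref{lem:fixed_end}. The only points needing a little care are the elementary observation that an unbounded connected subset of the line $\ax(a)$ contains a ray, and the remark that the end of $\cT$ determined by that ray is genuinely fixed by $A$ (not merely an end of the sub-line $\ax(a)$), which holds because $\ax(a)$ is isometrically embedded in $\cT$.
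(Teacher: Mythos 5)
Your proof is correct and follows essentially the same route as the paper's: assume the intersection of axes is unbounded, extract a ray whose end is fixed by $\langle a,b,c\rangle$, apply Lemma~\ref{lem:fixed_end} to conclude this subgroup is $\Z^2$, and contradict $\langle a,b\rangle\cap\langle c\rangle=\{1\}$ via a finite-index argument. The extra detail you supply in Step 1 (why each generator fixes the end) is a correct elaboration of what the paper leaves implicit.
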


\begin{proof} Arguing by contradiction, assume that the intersection $\ax(a) \cap \ax(c)$ is unbounded. Then this intersection contains a ray converging to an end of $\cT$, and this end will be fixed by the subgroup $A=\langle a,b,c \rangle$ (see Lemma~\ref{lem:commuting-types}.\ref{it:commtypes-hypell}). By the assumptions, $A$ contains a central elliptic element $b \neq 1$ and hyperbolic elements $a$ and $c$, so we can apply Lemma~\ref{lem:fixed_end} to conclude that $A \cong \Z^2$. Since $\langle a,b \rangle \cong \Z^2$,  this subgroup must have finite index in $A$. Therefore, there is $n \in \N$ such that $c^n \in \langle a,b \rangle$, contradicting the assumption that $\langle a,b \rangle \cap \langle c \rangle=\{1\}$. Thus the lemma is proved.    
\end{proof}

\subsection{One-relator groups}
\label{ssec:prelim-1rel} Let $G$ be a finitely generated one-relator group given by \eqref{eq:1-rel_pres}.
We use the so-called Magnus--Moldavanskii hierarchy for $G$ in the strong form, recently established by Masters \cite{Masters} and Linton \cite{Linton}: there exists a sequence of finitely generated one-relator groups
\begin{equation} \label{eq:HNN}
G_0\hookrightarrow G_1 \hookrightarrow  G_2\hookrightarrow  \dots \hookrightarrow G_s=G,
\end{equation}
where $G_0$ is a free product of finitely many cyclic groups and $G_{i}$ is an HNN-extension of $G_{i-1}$ with finitely generated free associated subgroups, for every $i=1,\dots,s$.

\begin{rem}
We only employ the strong form of Magnus--Moldavanskii hierarchy for convenience, and for our purposes the classical form (see \cite[Section~IV.5]{LyndonSchupp}) would also be sufficient. In this classical hierarchy the group $G_{i}$ is only known to embed into an HNN-extension of a ``simpler'' one-relator group $G_{i-1}$.     
\end{rem}

For the remainder of this subsection we fix a one-relator group $G$ with presentation \eqref{eq:1-rel_pres} and assume that it has a hierarchy \eqref{eq:HNN} with $s>0$. We let $\mathcal T$ be the Bass--Serre tree for the splitting of $G=G_s$ as an HNN-extension of $G_{s-1}$. Then $G$ acts on $\cT$ by isometries and without edge inversions, with one orbit of vertices and two orbits of (oriented) edges. The vertex stabilisers are conjugate to $G_{s-1}$ in $G$ and edge stabilisers are free of finite rank. In particular, the statements established in Subsection~\ref{subsec:trees} all apply in this situation.

Furthermore, it follows from the construction of the sequence~\eqref{eq:HNN} that if $H$ is the stabiliser of an edge of $\cT$ incident to a vertex stabilised by $G_{s-1}$, then $H$ is conjugate to a \emph{Magnus subgroup} of $G_{s-1}$, i.e., a free subgroup generated by a subset of $\{x_1,\dots,x_k\}$ that omits at least one letter $x_i$ involved in the relator $W$.  We thus have the following geometric rephrasing of a result of Bagherzadeh.

\begin{prop}[{\cite[Theorem B]{Bagh}}] \label{prop:bagh}
Let $e$ be an edge starting at a vertex $v$ in $\cT$. Then for any $g \in \st_G(v) \setminus \st_G(e)$, the intersection $\st_G(e) \cap \st_G(g\,e)$ is cyclic.
\end{prop}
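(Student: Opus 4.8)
The plan is to read this statement off Bagherzadeh's Theorem~B \cite{Bagh}: the substance is his, and what remains is to translate the hypotheses about $\cT$ into statements about Magnus subgroups of the one-relator group $G_{s-1}$. First I would use that $G=G_s$ acts transitively on the vertices of $\cT$ to choose $\gamma\in G$ with $\gamma v=v_0$, where $v_0$ is the vertex whose stabiliser is exactly $G_{s-1}$ (the one arising from the splitting of $G_s$ as an HNN-extension of $G_{s-1}$ at the last stage of the hierarchy \eqref{eq:HNN}). Since the $G$-action does not invert edges it preserves edge orientations, and it conjugates stabilisers; hence replacing $(v,e,g)$ by $(\gamma v,\gamma e,\gamma g\gamma^{-1})$ changes $\st_G(e)\cap\st_G(g\,e)$ only by conjugation in $G$ and leaves the hypotheses intact, so there is no loss in assuming $v=v_0$, $\st_G(v)=G_{s-1}$ and $g\in G_{s-1}$. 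By the description of edge stabilisers recalled above, $\st_G(e)$ is then a $G_{s-1}$-conjugate of a Magnus subgroup of $G_{s-1}$, say $\st_G(e)=hMh^{-1}$ with $M$ a Magnus subgroup and $h\in G_{s-1}$; conjugating once more by $h^{-1}\in\st_G(v_0)$ — which fixes $v_0$ and keeps the (conjugate of the) element $g$ inside $G_{s-1}$ — I may assume outright that $\st_G(e)=M$ is a genuine Magnus subgroup of $G_{s-1}$.

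With these normalisations, the hypothesis $g\in\st_G(v)\setminus\st_G(e)$ reads $g\in G_{s-1}\setminus M$; and since $g$ fixes $v$, the edge $g\,e$ is again incident to $v$, with $\st_G(g\,e)=gMg^{-1}$, so that
\[
\st_G(e)\cap\st_G(g\,e)=M\cap gMg^{-1}.
\]
Bagherzadeh's Theorem~B \cite{Bagh} asserts precisely that the intersection of a Magnus subgroup of a one-relator group with one of its conjugates by an element outside it is cyclic, whence $M\cap gMg^{-1}$ is cyclic; undoing the two conjugations shows that in the original picture $\st_G(e)\cap\st_G(g\,e)$ is a $G$-conjugate of $M\cap gMg^{-1}$, hence cyclic as well.

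I do not expect a genuine obstacle here, as the mathematical content is entirely contained in Bagherzadeh's theorem (whose own proof runs by the Magnus rewriting process and induction on the length of the relator). The one point demanding care is the passage from ``$\st_G(e)$ is conjugate to a Magnus subgroup'' to ``$\st_G(e)$ \emph{is} a Magnus subgroup'' while simultaneously arranging that the relevant element lies in $G_{s-1}$; this is exactly what forces the two successive conjugations — first by an element of $G$, then by an inner automorphism of $G_{s-1}$ — and it uses the explicit form of the associated subgroups in the Magnus--Moldavanskii hierarchy.
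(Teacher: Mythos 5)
Your argument is correct and coincides with what the paper does: the paper offers no separate proof, precisely because the proposition is intended as the geometric rephrasing of Bagherzadeh's Theorem~B that you carry out, using vertex-transitivity and the identification of edge stabilisers with conjugates of Magnus subgroups to reduce $\st_G(e)\cap\st_G(g\,e)$ to $M\cap gMg^{-1}$ with $M$ a Magnus subgroup of $G_{s-1}$ and $g\in G_{s-1}\setminus M$. The two normalising conjugations you perform are exactly the routine verification the authors leave implicit.
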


\subsection{Ping-pong lemmas}
\label{ssec:ping-pong}
Many of our arguments rely on variations of the classical Ping-Pong Lemma to exhibit free products (possibly with amalgamation) inside groups acting on trees. We will use two versions of the lemma: one for a free product of finitely many groups and one for a free product of two groups with amalgamation.

\begin{prop}[Ping-Pong Lemma, version 1 {\cite[Lemma~2.1]{O-S}}] \label{prop:PPv1}
Let $G$ be a group generated by its subgroups $G_1,\ldots,G_k$, for some $k \geq 2$, with $|G_i| \geq 3$, for some $i$. Suppose that $G$ acts on a set $\Omega$ and that there exist pairwise disjoint non-empty subsets $\Omega_1,\ldots,\Omega_k \subset \Omega$ satisfying the following property:
\begin{itemize}
\item $g_i\,\Omega_j \subseteq \Omega_i$, for all $i \neq j$ and all $g_i \in G_i\setminus\{1\}$.
\end{itemize}
Then $G$ naturally splits as the free product $G_1 * \dots * G_k$.
\end{prop}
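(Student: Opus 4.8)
The plan is to prove that the canonical epimorphism $\varphi\colon G_1 * \cdots * G_k \to G$, which exists by the universal property of the free product together with the hypothesis $G = \langle G_1,\dots,G_k\rangle$, is in fact an isomorphism. Every non-trivial element of $G_1 * \cdots * G_k$ is represented by a \emph{reduced word} $w = a_1 a_2 \cdots a_n$ with $n \ge 1$, $a_j \in G_{i_j}\setminus\{1\}$ and $i_j \ne i_{j+1}$ for $1 \le j < n$, so it suffices to show that every such $w$ acts non-trivially on $\Omega$, and hence $\varphi(w) \ne 1$.

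The core is a ping-pong computation. Suppose first that $i_1 = i_n$ (this includes the case $n = 1$). Choose a point $x \in \Omega_j$ for some $j \ne i_1$, which is possible since $k \ge 2$ and each $\Omega_p$ is non-empty. Reading $w$ from the right, a downward induction shows $a_t a_{t+1}\cdots a_n\, x \in \Omega_{i_t}$ for every $t$: the rightmost step applies $a_n$ to a point of $\Omega_j$ with $j \ne i_n = i_1$, and each later step applies $a_t \in G_{i_t}\setminus\{1\}$ to a point of $\Omega_{i_{t+1}}$ with $i_{t+1} \ne i_t$, so the hypothesis sends it into $\Omega_{i_t}$. In particular $w\,x \in \Omega_{i_1}$, and since $\Omega_j \cap \Omega_{i_1} = \varnothing$ we get $w\,x \ne x$, so $w$ acts non-trivially. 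If instead $i_1 \ne i_n$ and $k \ge 3$, exactly the same computation works after choosing $x \in \Omega_j$ with $j \notin \{i_1, i_n\}$; again $w\,x \in \Omega_{i_1}$ while $x \in \Omega_j$ with $j \ne i_1$.

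The one remaining case, $i_1 \ne i_n$ with $k = 2$, is where the hypothesis that $|G_{i_0}| \ge 3$ for some $i_0$ is used, and it is the only genuine obstacle: here $\{i_1, i_n\} = \{1,2\}$ exhausts all indices, so no ``safe'' basepoint is available and one must instead conjugate $w$ to reduce to the previous case. Replacing $w$ by $w^{-1}$ if necessary (which does not affect whether $\varphi(w) = 1$), we may assume $|G_{i_n}| \ge 3$, so $G_{i_n}$ contains some $b \notin \{1, a_n\}$. Then $w' := b\,w\,b^{-1} = b\,a_1 a_2\cdots a_{n-1}\,(a_n b^{-1})$ is again reduced — its last syllable $a_n b^{-1}$ lies in $G_{i_n}\setminus\{1\}$, and since $i_1 \ne i_n$ no cancellation occurs at the front — and its first and last syllables both lie in $G_{i_n}$. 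By the case already handled, $\varphi(w') \ne 1$, hence $\varphi(w) \ne 1$. This proves that $\varphi$ is injective, so $\varphi$ is an isomorphism and $G$ splits naturally as $G_1 * \cdots * G_k$. (The size hypothesis cannot be dropped: e.g.\ the action of $\Z/2 = G_1 = G_2$ on a two-element set, with $\Omega_1,\Omega_2$ the two singletons, satisfies the ping-pong condition, yet $\Z/2 \not\cong \Z/2 * \Z/2$.)
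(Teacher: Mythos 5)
Your argument is correct. Note that the paper does not prove this proposition at all --- it is imported verbatim from the cited reference \cite[Lemma~2.1]{O-S} --- so there is no internal proof to compare against; your write-up is the standard ping-pong argument (induct along a reduced word from the right when the first and last syllables lie in the same factor or when a third factor provides a safe basepoint, and conjugate by a third element of some $G_i$ with $|G_i|\ge 3$ to handle the remaining $k=2$ case), including the correct observation of where the hypothesis $|G_i|\ge 3$ is genuinely needed and a valid counterexample showing it cannot be dropped.
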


\begin{prop}[Ping-Pong Lemma, version 2 {\cite[Proposition~III.12.4]{LyndonSchupp}}] \label{prop:PPv2}
Let $G$ be a group generated by its subgroups $G_1$ and $G_2$, so that $H = G_1 \cap G_2$ is a proper subgroup of both $G_1$ and $G_2$, and $[G_i:H] \geq 3$ for some $i \in \{1,2\}$. Suppose that $G$ acts on a set $\Omega$ and that there exist pairwise disjoint non-empty subsets $\Omega_1,\Omega_2 \subset \Omega$ satisfying the following properties:
\begin{itemize}
\item $g_i\,\Omega_j \subseteq \Omega_i$, for $i \neq j$ and all $g_i \in G_i \setminus H$;
\item $h\,\Omega_i \subseteq \Omega_i$, for each $i = 1,2$ and all $h \in H$.
\end{itemize}
Then $G$ naturally splits as the amalgamated free product $G_1 *_H G_2$.
\end{prop}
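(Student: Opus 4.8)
The plan is to prove Proposition~\ref{prop:PPv2} by the standard normal-form/ping-pong argument for amalgamated free products. Write $G' = G_1 *_H G_2$ for the abstract amalgam and let $\pi\colon G' \to G$ be the canonical surjection induced by the inclusions $G_1, G_2 \hookrightarrow G$; since $G$ is generated by $G_1$ and $G_2$, this map is onto, and the content of the statement is that $\pi$ is injective (so that the natural splitting holds). To show injectivity, I would take a non-trivial element $w \in G'$ and verify that $\pi(w) \neq 1$ in $G$ by tracking the action on $\Omega$.

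First I would recall the normal form for elements of an amalgamated free product. Fix right transversals $T_1$ of $H$ in $G_1$ and $T_2$ of $H$ in $G_2$, each containing the identity coset representative. Every element of $G'$ can be written uniquely as $h\,t_1 t_2 \cdots t_n$, where $h \in H$, each $t_j$ lies in $T_1 \setminus \{1\}$ or $T_2 \setminus \{1\}$, and consecutive $t_j, t_{j+1}$ come from different factors; the integer $n$ is the syllable length. A non-trivial $w$ with $n \geq 1$ thus has a reduced expression as an alternating product $g_1 g_2 \cdots g_n$ with each $g_j \in G_i \setminus H$ (for the appropriate $i = i(j)$) and $i(j) \neq i(j+1)$. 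The ping-pong hypotheses say exactly that each factor moves the ``opposite'' ping-pong set into its own, and that $H$ preserves both sets.

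Next I would run ping-pong. Suppose for contradiction that $\pi(w) = 1$ for some reduced $w = g_1 \cdots g_n$ of length $n \geq 1$ (the case $w \in H \setminus \{1\}$ needs a separate, easy argument, using that $H$ is proper in both factors so some $G_i \setminus H$ is non-empty to test against). By cyclically conjugating I may assume $g_1$ and $g_n$ lie in different factors, say $g_1 \in G_1 \setminus H$ and $g_n \in G_2 \setminus H$; the condition $[G_i : H] \geq 3$ for some $i$ guarantees enough room to perform this reduction, since it provides a coset representative $s \in G_i \setminus H$ with $s \notin \{g_n^{-1}\}$-coset, allowing conjugation by $s$ without creating cancellation. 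Pick a point $x \in \Omega_1$. Reading the word from the right, $g_n$ sends $\Omega_1$ into $\Omega_2$, then $g_{n-1} \in G_1 \setminus H$ sends $\Omega_2$ into $\Omega_1$, and so on, alternately; since consecutive factors lie in opposite factors, each application lands in the set disjoint from the current one, and after all $n$ factors the point $\pi(w)\cdot x = g_1 \cdots g_n \cdot x$ lies in $\Omega_1$ if $g_1 \in G_1 \setminus H$. Because $\Omega_1$ and $\Omega_2$ are disjoint and non-empty, $\pi(w)$ cannot act as the identity (which fixes $x \in \Omega_1$ but would have to equal a point forced into $\Omega_2$ at the first step, or more carefully, the composite image avoids $x$). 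This contradiction shows $\pi(w) \neq 1$, hence $\ker \pi = \{1\}$ and $\pi$ is an isomorphism.

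The main obstacle is the careful bookkeeping in two places: handling the boundary syllables so that the ping-pong lands cleanly in disjoint sets, and using the hypothesis $[G_i : H] \geq 3$ to make the cyclic reduction legitimate (this is the amalgamated analogue of the ``$|G_i| \geq 3$'' condition in version~1, needed to rule out the degenerate situation where every element of $G_1 \setminus H$ or $G_2 \setminus H$ conjugates a word back onto itself). Since this is a named result quoted verbatim from \cite[Proposition~III.12.4]{LyndonSchupp}, the cleanest route for the paper is simply to cite that reference; the sketch above indicates how one reconstructs the proof if needed, but I would not reproduce the full normal-form verification, treating it as standard.
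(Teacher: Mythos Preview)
The paper does not prove this proposition at all: it is stated with a direct citation to \cite[Proposition~III.12.4]{LyndonSchupp} and used as a black box, exactly as you recommend in your final sentence. So your conclusion matches the paper's treatment.

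That said, your sketch of the argument contains a genuine error that you should be aware of. You write that after cyclic conjugation you may assume $g_1$ and $g_n$ lie in \emph{different} factors, and then trace $\pi(w)\cdot x$ for $x \in \Omega_1$ back into $\Omega_1$. But landing in $\Omega_1$ when you started in $\Omega_1$ does not contradict $\pi(w) = 1$; your parenthetical ``or more carefully, the composite image avoids $x$'' is not justified and in general false. The correct manoeuvre is the opposite: conjugate so that $g_1$ and $g_n$ lie in the \emph{same} factor, say $G_1 \setminus H$. Then $g_n\,\Omega_2 \subseteq \Omega_1$, $g_{n-1}\,\Omega_1 \subseteq \Omega_2$, and so on, ending with $g_1$ sending $\Omega_2$ into $\Omega_1$; hence $\pi(w)\,\Omega_2 \subseteq \Omega_1$, which is disjoint from $\Omega_2$, giving the contradiction. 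The hypothesis $[G_i:H] \geq 3$ is used precisely here: if, say, $g_1 \in G_2 \setminus H$ and $g_n \in G_1 \setminus H$, one conjugates by some $s \in G_1 \setminus (H \cup g_n H)$, which exists because there are at least three cosets, to obtain a reduced word beginning and ending in $G_1 \setminus H$.
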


If $H$ a free group on a subset $X \subseteq H$ then the Cayley graph of $H$ with respect to $X$ is a tree $\cT$ and $H$ acts freely on $\cT$. Every non-trivial element $h \in H$ will act as a hyperbolic isometry on $\cT$, fixing two distinct points $h^\infty$ and $h^{-\infty}$ on the boundary $\partial \cT$. It is easy to see that if $g,h \in H \setminus\{1\}$ then $\{g^{\pm \infty}\} \cap \{h^{\pm \infty}\} \neq \emptyset$ if and only if $g^m=h^n$ for some $m,n \in \Z\setminus\{0\}$. 
A standard application of the ping-pong argument from Proposition~\ref{prop:PPv1}, applied to the action of $H$ on $\partial \cT$, gives the following (see \cite[Corollary~6]{Olsh} or the proof of \cite[Proposition~III.$\Gamma$.3.20]{BridsonHaefliger} for a more general statement in the case of hyperbolic groups).

\begin{lemma}\label{lem:large powers} Let $H$ be a free group and let $h_1,\dots,h_k \in H$ be  non-trivial elements. If $\langle h_i \rangle \cap \langle h_j \rangle=\{1\}$ for all $i<j$, then there exist $r \in \N$ such that the elements $h_1^r,\dots,h_k^r$ freely generate a free subgroup of rank $k$ in $H$.    
\end{lemma}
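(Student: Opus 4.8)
The plan is to run the ping-pong argument of Proposition~\ref{prop:PPv1} on the action of $H$ on the boundary $\partial\cT$ of the Cayley tree $\cT$ of $H$ with respect to a free generating set, exactly as indicated in the discussion preceding the lemma. The cases $k \le 1$ are trivial with $r=1$, so assume $k \ge 2$; passing to the subgroup $\langle h_1,\dots,h_k\rangle$, which is again free, we may also assume $H$ is finitely generated. Each $h_i$ is a non-trivial element of the free group $H$, hence acts as a hyperbolic isometry of $\cT$ with two distinct fixed ends $h_i^{+\infty}, h_i^{-\infty} \in \partial\cT$.

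Next I would use the hypothesis to separate these ends. By the observation recorded just before the lemma, $\{h_i^{\pm\infty}\}\cap\{h_j^{\pm\infty}\}\neq\varnothing$ would yield $h_i^m=h_j^n$ for some $m,n\in\Z\setminus\{0\}$, and hence a non-trivial element in $\langle h_i\rangle\cap\langle h_j\rangle$; so our assumption forces the $2k$ ends $h_1^{\pm\infty},\dots,h_k^{\pm\infty}$ to be pairwise distinct. Since $\partial\cT$ is Hausdorff, I can choose pairwise disjoint open sets $U_i^{+}\ni h_i^{+\infty}$ and $U_i^{-}\ni h_i^{-\infty}$ for $i=1,\dots,k$, and put $\Omega_i:=U_i^{+}\cup U_i^{-}$ and $\Omega:=\partial\cT$; the sets $\Omega_1,\dots,\Omega_k$ are then pairwise disjoint and non-empty.

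Now I would invoke the North--South dynamics of hyperbolic tree isometries (see \cite{Olsh}): for each $i$ there is $r_i\in\N$ with $h_i^{r_i}(\partial\cT\setminus U_i^{-})\subseteq U_i^{+}$ and $h_i^{-r_i}(\partial\cT\setminus U_i^{+})\subseteq U_i^{-}$. Take $r$ to be a common multiple of $r_1,\dots,r_k$. Since $U_i^{+}$ and $U_i^{-}$ are disjoint, a straightforward induction gives $h_i^{rm}(\partial\cT\setminus U_i^{-})\subseteq U_i^{+}$ for all $m\ge1$ and $h_i^{rm}(\partial\cT\setminus U_i^{+})\subseteq U_i^{-}$ for all $m\le-1$. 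As $\Omega_j$ is disjoint from $U_i^{+}\cup U_i^{-}=\Omega_i$ whenever $j\neq i$, we conclude $g\,\Omega_j\subseteq\Omega_i$ for all $j\neq i$ and all $g\in\langle h_i^{r}\rangle\setminus\{1\}$. Finally, Proposition~\ref{prop:PPv1} applied with $G_i=\langle h_i^{r}\rangle$ (each infinite, so $|G_i|\ge3$) shows $\langle h_1^{r},\dots,h_k^{r}\rangle=\langle h_1^{r}\rangle*\cdots*\langle h_k^{r}\rangle$; since $h_i$ has infinite order in the free group $H$, every factor is infinite cyclic and this free product is free of rank $k$ with basis $h_1^{r},\dots,h_k^{r}$.

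There is no serious obstacle here, as this is a classical ping-pong argument; the one point that deserves a line of care is the uniformity in the exponent, namely that \emph{every} non-trivial power of $h_i^{r}$, not merely $h_i^r$ itself, moves the relevant sets correctly. This is exactly what the disjointness of $U_i^{+}$ and $U_i^{-}$ buys us in the induction above, so once the neighbourhoods are chosen disjoint the rest is automatic.
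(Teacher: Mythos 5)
Your proof is correct and follows exactly the route the paper indicates: the paper gives no detailed argument for this lemma, merely noting that it is ``a standard application of the ping-pong argument from Proposition~\ref{prop:PPv1}, applied to the action of $H$ on $\partial\cT$'' (citing \cite{Olsh} and \cite{BridsonHaefliger}), and your write-up is precisely that argument carried out in full, with the uniformity of the exponent handled correctly via the disjointness of the attracting and repelling neighbourhoods.
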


\subsection{Trace monoids and right-angled Artin groups}
\label{ssec:prelim-raags}

Let $\Gamma$ be a finite simplicial graph, and denote by $V(\Gamma)$ and $E(\Gamma)$ the sets of vertices and edges of $\Gamma$, respectively. The \emph{right-angled Artin group} on $\Gamma$, denoted by $A(\Gamma)$, is the group given by the presentation
\begin{equation} \label{eq:pres_RAAG}
\langle v \in V(\Gamma) \mid vw = wv, \text{ for all } \{v,w\} \in E(\Gamma) \rangle.
\end{equation}
Moreover, the \emph{trace monoid} on $\Gamma$, denoted by $T(\Gamma)$, is the monoid given by the presentation \eqref{eq:pres_RAAG}, viewed now as a monoid presentation. It is a result of Paris \cite[Theorem~1.1]{Paris} that $T(\Gamma)$ is naturally isomorphic to the \emph{monoid of positive words} $A(\Gamma)^+$ in $A(\Gamma)$, whose elements can be represented by positive words over $V(\Gamma)$ (that do not involve letters from $V(\Gamma)^{-1}$).

If $\Delta$ is an induced subgraph of $\Gamma$ then $V(\Delta)$ naturally generates a copy of $A(\Delta)$ in $A(\Gamma)$, and a copy of $T(\Delta)$ in $T(\Gamma)$. This important property can be proved by noticing that $A(\Gamma)$ (or $T(\Gamma)$) retracts onto $A(\Delta)$ (respectively, $T(\Delta)$), via the map sending all generators from $V(\Gamma) \setminus V(\Delta)$ to the identity element.

In most of the arguments, we will focus on the case when $\Gamma$ is a path $P_n$, of length $n-1$, for $n \in \{3,4\}$. Then $A(P_n)$ (respectively $T(P_n)$) has the group presentation (respectively monoid presentation)
\begin{equation}\label{eq:pres_HN}
\langle \alpha_1,\ldots,\alpha_n \mid \alpha_1\alpha_2=\alpha_2\alpha_1,\ldots,\alpha_{n-1}\alpha_n=\alpha_n\alpha_{n-1} \rangle.    
\end{equation}
By saying that elements $a_1,\ldots,a_n$ in a group $G$ generate a copy of $A(P_n)$ (respectively, $T(P_n)$) \emph{in the natural way}, we mean that the assignment $\alpha_i \mapsto a_i$, with $\alpha_1,\ldots,\alpha_n$ as in \eqref{eq:pres_HN}, extends to an injective group homomorphism $A(P_n) \to G$ (respectively, an injective monoid homomorphism $T(P_n) \to G$).

Given a finite simplicial graph $\Gamma$, the \emph{Bestvina--Brady subgroup} of $A(\Gamma)$ is the kernel of the group homomorphism $\Phi\colon A(\Gamma) \to \Z$ defined by setting $\Phi(v) = 1$, for all $v \in V(\Gamma)$. Since $P_n$ is a tree for all $n \in \N$, we have the following special case of a result of Dicks and Leary \cite{DicksLeary}.

\begin{lemma}[{\cite[Theorem~1]{DicksLeary}}] \label{lem:BBfree}
Let $n \in \N$ and let $H \lhd A(P_n)$ be the Bestvina--Brady subgroup. Then $H$ is free of rank $n-1$, freely generated by the elements $\alpha_1^{-1}\alpha_2,\ldots,\alpha_{n-1}^{-1}\alpha_n \in H$.
\end{lemma}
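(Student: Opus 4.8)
The plan is to prove the lemma by induction on $n$, realising $A(P_n)$ as an amalgamated product over $A(P_{n-1})$ and studying the induced action of $H$ on the associated Bass--Serre tree. The base cases are immediate: for $n=1$ we have $A(P_1)\cong\Z$ with $\Phi$ an isomorphism, so $H=\{1\}$; and for $n=2$ we have $A(P_2)\cong\Z^2$ with $H=\ker\Phi=\langle\alpha_1^{-1}\alpha_2\rangle\cong\Z$. So suppose $n\geq3$ and that the lemma holds for $P_{n-1}$. Since the vertex $\alpha_n$ of $P_n$ is adjacent only to $\alpha_{n-1}$, adjoining the relation $\alpha_{n-1}\alpha_n=\alpha_n\alpha_{n-1}$ to the presentation of $A(P_{n-1})$ exhibits $A(P_n)$ as the amalgamated free product
\begin{equation*}
A(P_n)=A(P_{n-1})*_{\langle\alpha_{n-1}\rangle}\bigl(\langle\alpha_{n-1}\rangle\times\langle\alpha_n\rangle\bigr),
\end{equation*}
which is non-degenerate precisely because $n\geq3$ forces $\langle\alpha_{n-1}\rangle$ to be a proper subgroup of $A(P_{n-1})$ (this is why $n\leq2$ are treated as base cases).

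Next I would let $\cT$ be the Bass--Serre tree of this splitting and consider the restricted action of $H$. Its edge stabilisers are the intersections of $H$ with conjugates of $\langle\alpha_{n-1}\rangle$, and its vertex stabilisers are the intersections of $H$ with conjugates of $A(P_{n-1})$ or of $\langle\alpha_{n-1},\alpha_n\rangle\cong\Z^2$. The two key points are: (i) as $\Phi$ is a homomorphism to an abelian group it is constant on conjugacy classes, hence injective on every conjugate of $\langle\alpha_{n-1}\rangle$, so all edge stabilisers of the $H$-action are trivial; and (ii) $\Phi$ maps each of the two vertex groups of the splitting onto $\Z=A(P_n)/H$ (each contains some $\alpha_i$), so $A(P_n)=H\cdot A(P_{n-1})=H\cdot\langle\alpha_{n-1},\alpha_n\rangle=H\cdot\langle\alpha_{n-1}\rangle$ and hence $H$ acts transitively on each of the two vertex orbits and on the edge orbit of $\cT$. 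Therefore the quotient graph $\cT/H$ is a single segment, and the Bass--Serre structure theorem (as in \cite[Section~I.4]{DD}) gives $H=\st_H(p)*\st_H(q)$ for a suitable pair of adjacent vertices $p,q$, which we take to be the ones fixed by $A(P_{n-1})$ and by $\langle\alpha_{n-1},\alpha_n\rangle$ respectively. Here $\st_H(p)=H\cap A(P_{n-1})$ is exactly the Bestvina--Brady subgroup of $A(P_{n-1})$, which by the inductive hypothesis is free of rank $n-2$ and freely generated by $\alpha_1^{-1}\alpha_2,\dots,\alpha_{n-2}^{-1}\alpha_{n-1}$, while $\st_H(q)=H\cap\langle\alpha_{n-1},\alpha_n\rangle=\ker(\Phi|_{\Z^2})=\langle\alpha_{n-1}\alpha_n^{-1}\rangle\cong\Z$.

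Combining these, $H$ is free of rank $(n-2)+1=n-1$, freely generated by $\alpha_1^{-1}\alpha_2,\dots,\alpha_{n-2}^{-1}\alpha_{n-1},\alpha_{n-1}\alpha_n^{-1}$. To finish, since $\alpha_{n-1}$ and $\alpha_n$ commute in $A(P_n)$ we have $\alpha_{n-1}\alpha_n^{-1}=\alpha_n^{-1}\alpha_{n-1}=(\alpha_{n-1}^{-1}\alpha_n)^{-1}$, so replacing the last free generator by its inverse shows that $H$ is also freely generated by $\alpha_1^{-1}\alpha_2,\dots,\alpha_{n-1}^{-1}\alpha_n$, completing the induction. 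I expect the only subtle points to be bookkeeping ones: confirming that the amalgamated product decomposition is genuine and non-degenerate, and checking that $\Phi$ surjects onto $\Z$ from each vertex group so that $\cT/H$ really collapses to a single edge; once these are in place, the rest is a routine application of the structure theory for groups acting on trees with trivial edge stabilisers, and in particular this argument is self-contained and independent of \cite{DicksLeary}.
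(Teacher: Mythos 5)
Your argument is correct, and it is genuinely different from what the paper does: the paper offers no proof at all, simply quoting the theorem of Dicks and Leary, which computes a presentation of the Bestvina--Brady kernel of $A(\Gamma)$ for an arbitrary finite connected graph $\Gamma$ (one generator per edge, with relators coming from cycles and higher-dimensional cells of the flag complex) and which, for a tree, specialises to freeness on the $n-1$ edge generators. Your induction replaces that citation with a self-contained Bass--Serre argument special to paths, and every step checks out: the splitting $A(P_n)=A(P_{n-1})*_{\langle\alpha_{n-1}\rangle}(\langle\alpha_{n-1}\rangle\times\langle\alpha_n\rangle)$ is the standard amalgam obtained by covering $P_n$ by the induced subgraphs on $\{\alpha_1,\dots,\alpha_{n-1}\}$ and $\{\alpha_{n-1},\alpha_n\}$; since $\Phi$ is valued in an abelian group it is injective on every conjugate of $\langle\alpha_{n-1}\rangle$, so $H$ acts on the Bass--Serre tree with trivial edge stabilisers; and since $\Phi$ already surjects onto $\Z$ from $\langle\alpha_{n-1}\rangle$, the subgroup $H$ is transitive on both vertex orbits and on the edge orbit, so the quotient graph is a single segment and $H=(H\cap A(P_{n-1}))*(H\cap\langle\alpha_{n-1},\alpha_n\rangle)$. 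The first factor is precisely the Bestvina--Brady subgroup of $A(P_{n-1})$ (the restriction of $\Phi$ is the corresponding character), handled by induction, and the second is $\langle\alpha_{n-1}\alpha_n^{-1}\rangle\cong\Z$; replacing the last generator by its inverse $\alpha_{n-1}^{-1}\alpha_n$ (legitimate because $\alpha_{n-1}$ and $\alpha_n$ commute, and inverting a free generator is a Nielsen move) yields exactly the stated basis. What your approach buys is independence from \cite{DicksLeary} and an argument that would extend verbatim to any finite tree by deleting a leaf; what it gives up is only the generality of the Dicks--Leary presentation for graphs that are not trees, which the paper never needs.
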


\section{Generating \texorpdfstring{$A(P_3)$}{A(P\_3)}}
\label{sec:P_3}

In this section we prove a strong version of Theorem~\ref{thm:P_n} in the case $n = 3$. Recall that $A(P_3)$ has the following standard presentation:
\begin{equation}\label{eq:pres-P_3}
    A(P_3)=\langle \alpha,\beta,\gamma \mid \alpha \beta=\beta\alpha,~\beta \gamma=\gamma\beta \rangle.
\end{equation} In particular, $A(P_3) \cong F_2 \times \Z$, where the free group $F_2$ is freely generated by $\{\alpha,\gamma\}$, and the infinite cyclic factor is generated by $\beta$. It is easy to see, for example by looking at the abelianisation of $A(P_3)$, that the subgroups  $\langle \alpha,\beta \rangle, \langle \beta,\gamma\rangle \leqslant A(P_3)$ are free abelian of rank $2$, and 
\[ \langle \alpha,\beta \rangle \cap \langle \beta , \gamma\rangle= \langle \beta \rangle \text{ in } A(P_3).\]

\begin{prop} \label{prop:P3} Suppose that $G$ is a one-relator group and we have a homomorphism $\psi\colon A(P_3) \to G$ (where $A(P_3)$ is given by the presentation \eqref{eq:pres-P_3}) such that $\psi$ is injective on the union  $\langle \alpha,\beta \rangle \cup \langle \beta,\gamma\rangle$. Then there exist 
 $k,l,m \in \mathbb{Z} \setminus \{0\}$ such that the elements $\psi(\alpha)^k$, $\psi(\beta)^l$ and $\psi(\gamma)^m$ generate a copy of $A(P_3)$ in $G$ in the natural way.
\end{prop}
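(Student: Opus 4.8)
The plan is to reduce the statement to finding a rank-$2$ free subgroup inside a suitable quotient of $G$, and then to produce it by analysing the action of $G$ on a Bass--Serre tree coming from its hierarchy \eqref{eq:HNN}, inducting on the length of that hierarchy.

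Set $a:=\psi(\alpha)$, $b:=\psi(\beta)$, $c:=\psi(\gamma)$, so that $b$ commutes with $a$ and with $c$ and, by hypothesis, $\langle a,b\rangle$ and $\langle b,c\rangle$ are copies of $\Z^2$ on the displayed bases with $\langle a,b\rangle\cap\langle b,c\rangle=\langle b\rangle$. From this one reads off that $b$ has infinite order and that $\langle a\rangle\cap\langle c\rangle=\langle a,b\rangle\cap\langle c\rangle=\langle a\rangle\cap\langle b,c\rangle=\{1\}$; and since $\Z^2$ embeds in $G$ and one-relator groups with torsion are hyperbolic, $G$ is torsion-free (so $\langle a,b,c\rangle$ has cohomological dimension $2$). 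Because $b$ lies in the centre of $\langle a,b,c\rangle$, the subgroup $\langle b\rangle$ is normal there; I claim it suffices to find $k,m\in\Z\setminus\{0\}$ such that the images $\bar a^k,\bar c^m$ freely generate a free group of rank $2$ in $Q:=\langle a,b,c\rangle/\langle b\rangle$. Indeed, then for any $l\in\Z\setminus\{0\}$ the assignment $\alpha\mapsto a^k$, $\beta\mapsto b^l$, $\gamma\mapsto c^m$ extends to a homomorphism $\phi\colon A(P_3)\to G$ (the defining relations hold since $b$ is central), and writing a general element of $A(P_3)=\langle\alpha,\gamma\rangle\times\langle\beta\rangle$ as $w(\alpha,\gamma)\beta^j$, membership in $\ker\phi$ means $w(a^k,c^m)b^{lj}=1$ in $G$; projecting to $Q$ kills the central factor and forces $w=1$, after which $b^{lj}=1$ forces $j=0$, so $\phi$ is injective, as required. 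One further notes that $\bar a,\bar c$ have infinite order in $Q$ and $\langle\bar a\rangle\cap\langle\bar c\rangle=\{1\}$; hence if $Q$ is virtually free we are done, by passing to a finite-index free subgroup of $Q$ and invoking Lemma~\ref{lem:large powers}.

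The quickest way to see that $Q$ is virtually free is to quote Bieri \cite[Corollary~8.7]{Bieri}, since $\langle a,b,c\rangle$ is finitely generated of cohomological dimension $2$ and $\langle b\rangle$ is a non-trivial normal subgroup; but for a self-contained argument I would induct on the length $s$ of a hierarchy \eqref{eq:HNN} for $G$. If $s=0$ then $G$ is a free product of cyclic groups, which contains no copy of $\Z^2$, so the hypothesis is vacuous. If $s\ge1$, let $G$ act on the Bass--Serre tree $\cT$ of the splitting of $G=G_s$ over $G_{s-1}$; edge stabilisers are free and vertex stabilisers are conjugates of $G_{s-1}$, which admits a hierarchy of length $s-1$. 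If $\langle a,b,c\rangle$ fixes a vertex of $\cT$, then it lies in a conjugate of $G_{s-1}$, and applying the inductive hypothesis to the induced map $A(P_3)\to G_{s-1}$ finishes the proof. Otherwise:
\begin{itemize}
\item If $b$ is hyperbolic, then $a$ and $c$ preserve $\ax(b)$ and, commuting with the non-trivial translation $b$, act on $\ax(b)$ as translations; so $\langle a,b,c\rangle$ acts on $\ax(b)$ through translations, with kernel $S$ contained in an edge stabiliser (hence free) and $\langle a,b,c\rangle/S\cong\Z$. Since $b$ maps to a non-trivial element of this $\Z$ and $S\cap\langle b\rangle=\{1\}$, the subgroup $S\langle b\rangle$ has finite index in $\langle a,b,c\rangle$, so $Q$ contains a finite-index subgroup isomorphic to the free group $S$ --- done.
\item If $b$ is elliptic, I would split according to the types of $a$ and $c$, using Lemma~\ref{lem:commuting-types} to locate their axes and fixed trees relative to $\fix(b)$. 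If both are hyperbolic, then Lemma~\ref{lem:bounded_intersec_of_axes} (applicable by the intersection facts above) shows $\ax(a)\cap\ax(c)$ is bounded, and a standard ping-pong on $\cT$ shows that $a^k,c^m$ freely generate a free group of rank $2$ for $k,m$ large. If exactly one of them, say $a$, is hyperbolic, then $\ax(a)\subseteq\fix(b)$ while $\fix(b)\cap\fix(c^m)$ is a single vertex for every $m\ne0$, so $\fix(c^m)$ meets the line $\ax(a)$ in at most one vertex; hence $c^m$ does not preserve $\ax(a)$, the intersection $\ax(a)\cap c^m\ax(a)$ is bounded, and a ping-pong between $a^k$ (with $k$ large) and $c^m$ again yields a rank-$2$ free group. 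If both are elliptic, then $\fix(a)\cap\fix(b)$ and $\fix(b)\cap\fix(c)$ are single vertices $v_1,v_2$, and were $\fix(a)\cap\fix(c)$ to contain a vertex $w$, the median of $v_1$, $v_2$, $w$ would lie on $[v_1,v_2]\subseteq\fix(b)$, on $[v_1,w]\subseteq\fix(a)$ and on $[v_2,w]\subseteq\fix(c)$, hence be fixed by $\langle a,b,c\rangle$, a contradiction; so $\fix(a)\cap\fix(c)=\varnothing$ and Proposition~\ref{prop:PPv1} gives $\langle a,c\rangle\cong\langle a\rangle*\langle c\rangle$, free of rank $2$.
\end{itemize}
In each of these sub-cases we obtain $k,m$ with $\langle a^k,c^m\rangle$ free of rank $2$ in $G$ itself, which likewise suffices (as $b$ is central), so the induction is complete.

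The main obstacle, I expect, is the case analysis in the elliptic-$b$ situation: making the ping-pong arguments for the mixed hyperbolic/elliptic configurations of $a$ and $c$ fully rigorous --- choosing the right ``half-tree'' regions on $\cT$ and verifying the overlap bounds --- and arranging the induction so that the fixed-vertex case feeds back cleanly (for which it is convenient that the statement being proved already carries the injectivity hypothesis, rather than quoting a black box).
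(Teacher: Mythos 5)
Your overall strategy is the same as the paper's: induct on the Magnus--Moldavanskii hierarchy, pass to the Bass--Serre tree of the top HNN-splitting, and split into cases according to the isometry types of $a$, $b$, $c$, using ping-pong. Your repackaging of the reduction (work in $Q=\langle a,b,c\rangle/\langle b\rangle$ and aim for a rank-$2$ free subgroup there) is sound, and your treatment of the case where $b$ is hyperbolic --- $\langle a,b,c\rangle$ acts on $\ax(b)$ by translations, so $Q$ is virtually free and Lemma~\ref{lem:large powers} applies --- is a clean alternative to the paper's argument with the elements $a^kb^p$ and $b^qc^m$. The median argument showing $\fix(a)\cap\fix(c)=\varnothing$ in the all-elliptic case is also correct (though not actually needed by the paper's version of that case).

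Two of your sub-cases, however, have genuine gaps. First, in the mixed case ($a$ hyperbolic, $b,c$ elliptic) the inference ``$\fix(c^m)$ meets $\ax(a)$ in at most one vertex, hence $c^m$ does not preserve $\ax(a)$'' is false as stated: a reflection of a simplicial line fixes at most one vertex yet preserves the line setwise. (It is repairable --- if $c^m$ reflected $\ax(a)$ then $c^{2m}$ would fix $\ax(a)$ pointwise, contradicting the fact that $\fix(c^{2m})\cap\fix(b)$ is a single vertex --- but the repair is absent, as is the argument ruling out $\ax(a)$ and $c^m\ax(a)$ sharing a ray, which needs Lemma~\ref{lem:fixed_end}.) More seriously, boundedness of $\ax(a)\cap c^m\ax(a)$ for a single $m$ does not set up a ping-pong for $\langle a^k\rangle*\langle c^m\rangle$: one must control \emph{all} powers $c^{mn}$, $n\neq 0$, and the real difficulty of this case --- which the paper isolates --- is that when $\fix(c)$ meets $\ax(a)$ in the single vertex $w$, some specific power $c^n$ may swap the two edges of $\ax(a)$ at $w$, breaking the naive ping-pong; one must then choose $m$ so that $c^{mt}$ avoids this for every $t\neq 0$. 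Your sketch does not detect this possibility. Second, in the all-elliptic case, $\fix(a)\cap\fix(c)=\varnothing$ alone does \emph{not} yield $\langle a,c\rangle\cong\langle a\rangle*\langle c\rangle$ via Proposition~\ref{prop:PPv1}: the ping-pong requires that no non-zero power of $a$ (resp.\ $c$) fixes the first (resp.\ last) edge of the bridge between the fixed trees, and $\fix(a^n)$ can be strictly larger than $\fix(a)$. The missing input, which the paper supplies, is that $b$ already fixes these edges, so $\langle a^n,b\rangle\cong\Z^2$ would have to embed into a free edge stabiliser. Both gaps are fixable with the tools you cite, but as written these steps do not go through.
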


\begin{proof}
Let $a,b,c \in G$ denote the $\psi$-images of the generators $\alpha, \beta, \gamma$ of $A(P_3)$ respectively. Note that each of these elements has infinite order in $G$.
Consider the hierarchy \eqref{eq:HNN} for $G=G_s$. Notice that $s>0$ because $G_0$ is a free product of finitely many cyclic groups, so it cannot contain free abelian subgroups of rank $2$. Thus we may assume, by induction on $s$, that the subgroup $\langle a,b,c \rangle$ is not conjugate into $G_{s-1}$ in $G$. In particular, this subgroup does not fix a vertex of the Bass--Serre tree $\cT$, associated to the HNN-splitting of $G$ over $G_{s-1}$, as discussed in Subsection~\ref{ssec:prelim-1rel}.

We aim to prove that for some $k,l,m \in \mathbb{Z}\setminus\{0\}$ the group homomorphism $\varphi = \varphi_{k,l,m}\colon A(P_3) \to G$, sending $\alpha \to a^k$, $\beta \to b^l$ and $\gamma \to c^m$, is injective. 
It is enough to show that $a^kb^{pl}$ and $b^{ql}c^m$ freely generate the free subgroup $F_2$, of rank $2$, in $G$, for some $p,q \in \mathbb{Z}$: that is, $\varphi|_{\langle \alpha\beta^p,\beta^q\gamma \rangle}$ is injective. Indeed, suppose this is the case and let $g \in \ker(\varphi)$. Let $r \in \mathbb{Z}$ and $h \in \langle \alpha\beta^p,\beta^q\gamma \rangle$ be such that $g = \beta^r h$. Then $\varphi(h) = \varphi(\beta)^{-r}$ is central in $\varphi(\langle \alpha\beta^p,\beta^q\gamma \rangle) \cong F_2$, implying that $\varphi(h) = 1$, and, therefore, $h = 1$ since $\varphi|_{\langle \alpha\beta^p,\beta^q\gamma \rangle}$ is injective. Thus $g = \beta^r$, implying that $g = 1$ (as if $r \neq 0$, then $\varphi(\beta) = \psi(\beta)^l$ has finite order in $G$, contradicting the assumption that $\psi$ is injective on $\langle \alpha,\beta \rangle \cong \Z^2$). Thus $\varphi$ must be injective, as required.

The proof depends on the types of isometries that $a$, $b$ and $c$ induce on $\cT$. In each case, we will use Lemma~\ref{lem:commuting-types} to analyse the resulting configuration of axes and fixed-point sets. Some of the cases considered are displayed in Figure~\ref{fig:P3}.

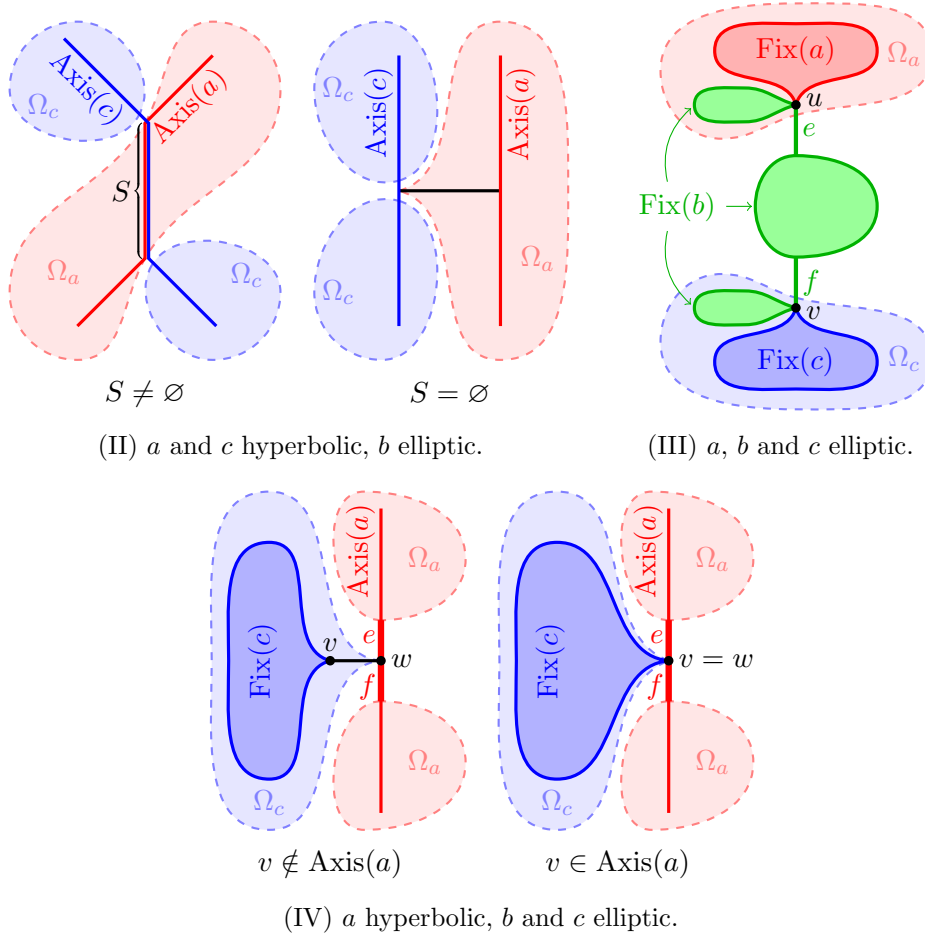
\begin{figure}[ht]
\renewcommand{\thesubfigure}{\Roman{subfigure}}
\begin{subfigure}[b]{0.6\textwidth}
\stepcounter{subfigure}
\centering
\begin{tikzpicture}[scale=0.9]
\begin{scope}
\draw [dashed,thick,red!50,fill=red!10] (0,1) to[out=60,in=180] (1,2.5) to[out=0,in=90] (2,1.5) to[out=-90,in=60] (0,-1) to[out=-120,in=0] (-1,-2.5) to[out=180,in=-90] (-2,-1.5) to[out=90,in=-120] cycle;
\node [red!50] at (-1.2,-1.2) {$\Omega_a$};
\draw [dashed,thick,blue!50,fill=blue!10] (-0.1,1.1) to[out=60,in=0] (-1,2.5) to[out=180,in=90] (-2,1.5) to[out=-90,in=-120] cycle;
\node [blue!50] at (-1.5,1.25) {$\Omega_c$};
\draw [dashed,thick,blue!50,fill=blue!10] (0.1,-1.1) to[out=-120,in=180] (1,-2.5) to[out=0,in=-90] (2,-1.5) to[out=90,in=60] cycle;
\node [blue!50] at (1.5,-1.25) {$\Omega_c$};
\draw [very thick,red] (-1,-2) -- (0,-1) -- (0,1) -- (1,2) node [below left,yshift=3pt,rotate=45] {$\ax(a)$};
\draw [very thick,blue,xshift=1.5pt] (1,-2) -- (0,-1) -- (0,1) -- (-1.25,2.25) node [below right,yshift=3pt,rotate=-45] {$\ax(c)$};
\draw [thick,decorate,decoration={calligraphic brace}] (-0.05,-1) -- (-0.05,1) node [midway,left] {$S$};
\node at (0,-3) {$S \neq \varnothing$};
\end{scope}
\begin{scope}[xshift=4.5cm]
\draw [dashed,thick,red!50,fill=red!10] (-0.75,0) to[out=15,in=180] (0.75,2.5) to[out=0,in=90] (1.75,0) to[out=-90,in=0] (0.75,-2.5) to[out=180,in=-15] (-0.75,0);
\node [red!50] at (1.3,-1) {$\Omega_a$};
\draw [dashed,thick,blue!50,fill=blue!10] (-0.75,0.15) to[out=15,in=0] (-1,2.5) to[out=180,in=90] (-2,1.5) to[out=-90,in=-165] cycle;
\node [blue!50] at (-1.65,1.5) {$\Omega_c$};
\draw [dashed,thick,blue!50,fill=blue!10] (-0.75,-0.15) to[out=-15,in=0] (-1,-2.5) to[out=180,in=-90] (-2,-1.5) to[out=90,in=165] cycle;
\node [blue!50] at (-1.65,-1.5) {$\Omega_c$};
\draw [very thick] (-0.75,0) -- (0.75,0);
\draw [very thick,blue] (-0.75,-2) -- (-0.75,2) node [above left,xshift=2pt,yshift=-0cm,rotate=90] {$\ax(c)$};
\draw [very thick,red] (0.75,-2) -- (0.75,2) node [below left,xshift=-2pt,rotate=90] {$\ax(a)$};
\node at (0,-3) {$S = \varnothing$};
\end{scope}
\end{tikzpicture}
\caption{$a$ and $c$ hyperbolic, $b$ elliptic.}
\label{sfig:P3-heh}
\end{subfigure}
\hfill
\begin{subfigure}[b]{0.35\textwidth}
\centering
\begin{tikzpicture}[scale=0.9]
\draw [dashed,thick,red!50,fill=red!10] (0,1.35) to[out=15,in=-90] (2,2.25) to[out=90,in=0] (0,3) to[out=180,in=90] (-2,2) to[out=-90,in=180] (-1.1,1) to[out=0,in=-165] cycle;
\node [red!50] at (1.6,2.25) {$\Omega_a$};
\draw [dashed,thick,blue!50,fill=blue!10] (0,-1.35) to[out=-15,in=90] (2,-2.25) to[out=-90,in=0] (0,-3) to[out=180,in=-90] (-2,-2) to[out=90,in=180] (-1.1,-1) to[out=0,in=165] cycle;
\node [blue!50] at (1.6,-2.25) {$\Omega_c$};
\draw [ultra thick,green!70!black] (0,1.5) -- (0,0.75) node [midway,right,xshift=-2pt] {$e$};
\draw [ultra thick,green!70!black] (0,-1.5) -- (0,-0.75) node [midway,right,xshift=-2pt] {$f$};
\draw [very thick,red,fill=red!30] (0,1.5) to[out=75,in=-90] (1.2,2.3) to[out=90,in=0] (0,2.7) to[out=180,in=90] (-1.2,2.3) to[out=-90,in=105] (0,1.5);
\node [red] at (0,2.3) {$\fix(a)$};
\draw [very thick,green!70!black,fill=green!40] (0,1.5) to[out=-165,in=0] (-0.75,1.25) to[out=180,in=-90] (-1.5,1.5) to[out=90,in=180] (-0.75,1.75) to[out=0,in=165] (0,1.5);
\draw [very thick,green!70!black,fill=green!40] (0,-1.5) to[out=165,in=0] (-0.75,-1.25) to[out=180,in=90] (-1.5,-1.5) to[out=-90,in=180] (-0.75,-1.75) to[out=0,in=-165] (0,-1.5);
\draw [very thick,green!70!black,fill=green!40] (0,0.75) to[out=0,in=90] (1.2,0) to[out=-90,in=0] (0,-0.75) to[out=180,in=-90] (-0.6,0) to[out=90,in=180] cycle;
\node [green!70!black] (fixb) at (-1.75,0) {$\fix(b)$};
\draw [->,green!70!black] (fixb) to[bend left] (-1.55,1.45);
\draw [->,green!70!black] (fixb) to[bend right] (-1.55,-1.45);
\draw [->,green!70!black] (fixb) to (-0.65,0);
\draw [very thick,blue,fill=blue!30] (0,-1.5) to[out=-75,in=90] (1.2,-2.3) to[out=-90,in=0] (0,-2.7) to[out=180,in=-90] (-1.2,-2.3) to[out=90,in=-105] (0,-1.5);
\node [blue] at (0,-2.3) {$\fix(c)$};
\fill (0,1.5) circle (2pt) node [right,yshift=2pt] {$u$};
\fill (0,-1.5) circle (2pt) node [right,yshift=-2pt] {$v$};
\end{tikzpicture}
\caption{$a$, $b$ and $c$ elliptic.}
\label{sfig:P3-eee}
\end{subfigure}
\bigskip

\begin{subfigure}{\textwidth}
\centering
\begin{tikzpicture}[scale=0.9]
\begin{scope}[xshift=-4.25cm,xscale=-1]
\draw [dashed,thick,blue!50,fill=blue!10] (-0.75,0) to[out=5,in=180] (0.9,2.5) to[out=0,in=90] (1.75,0) to[out=-90,in=0] (0.9,-2.5) to[out=180,in=-5] (-0.75,0);
\node [blue!50] at (0.9,-2.1) {$\Omega_c$};
\draw [dashed,thick,red!50,fill=red!10] (-0.75,0.6) to[out=0,in=0] (-0.5,2.5) to[out=180,in=90] (-2,1.5) to[out=-90,in=180] cycle;
\node [red!50] at (-1.4,1.5) {$\Omega_a$};
\draw [dashed,thick,red!50,fill=red!10] (-0.75,-0.6) to[out=0,in=0] (-0.5,-2.5) to[out=180,in=-90] (-2,-1.5) to[out=90,in=180] cycle;
\node [red!50] at (-1.4,-1.5) {$\Omega_a$};
\draw [very thick] (-0.75,0) -- (0,0);
\draw [very thick,red] (-0.75,-2.25) -- (-0.75,2.25) node [above left,xshift=2pt,yshift=6pt,rotate=90] {$\ax(a)$};
\draw [line width=2.4pt,red] (-0.75,0) -- (-0.75,0.6) node [pos=0.6,left,xshift=3pt] {$e$};
\draw [line width=2.4pt,red] (-0.75,0) -- (-0.75,-0.6) node [pos=0.65,left,xshift=3pt] {$f$};
\draw [very thick,blue,fill=blue!30] (0,0) to[out=15,in=180] (0.9,1.75) to[out=0,in=90] (1.5,0) to[out=-90,in=0] (0.9,-1.75) to[out=180,in=-15] (0,0);
\fill (-0.75,0) circle (2pt) node [right] {$w$} (0,0) circle (2pt) node [above] {$v$};
\node [blue,rotate=90] at (1,0) {$\fix(c)$};
\node at (0,-3) {$v \notin \ax(a)$};
\end{scope}
\begin{scope}[xscale=-1]
\draw [dashed,thick,blue!50,fill=blue!10] (-0.75,0) to[out=20,in=180] (0.9,2.5) to[out=0,in=90] (1.75,0) to[out=-90,in=0] (0.9,-2.5) to[out=180,in=-20] (-0.75,0);
\node [blue!50] at (0.9,-2.1) {$\Omega_c$};
\draw [dashed,thick,red!50,fill=red!10] (-0.75,0.6) to[out=0,in=0] (-0.5,2.5) to[out=180,in=90] (-2,1.5) to[out=-90,in=180] cycle;
\node [red!50] at (-1.4,1.5) {$\Omega_a$};
\draw [dashed,thick,red!50,fill=red!10] (-0.75,-0.6) to[out=0,in=0] (-0.5,-2.5) to[out=180,in=-90] (-2,-1.5) to[out=90,in=180] cycle;
\node [red!50] at (-1.4,-1.5) {$\Omega_a$};
\draw [very thick,red] (-0.75,-2.25) -- (-0.75,2.25) node [above left,xshift=2pt,yshift=6pt,rotate=90] {$\ax(a)$};
\draw [line width=2.4pt,red] (-0.75,0) -- (-0.75,0.6) node [pos=0.6,left,xshift=3pt] {$e$};
\draw [line width=2.4pt,red] (-0.75,0) -- (-0.75,-0.6) node [pos=0.65,left,xshift=3pt] {$f$};
\draw [very thick,blue,fill=blue!30] (-0.75,0) to[out=5,in=180] (0.9,1.75) to[out=0,in=90] (1.5,0) to[out=-90,in=0] (0.9,-1.75) to[out=180,in=-5] (-0.75,0);
\fill (-0.75,0) circle (2pt) node [right] {$v=w$};
\node [blue,rotate=90] at (1,0) {$\fix(c)$};
\node at (0,-3) {$v \in \ax(a)$};
\end{scope}
\end{tikzpicture}
\caption{$a$ hyperbolic, $b$ and $c$ elliptic.}
\label{sfig:P3-hee}
\end{subfigure}

\caption{Some cases in the proof of Proposition~\ref{prop:P3}.}
\label{fig:P3}
\end{figure}

\renewcommand{\thesubsubsection}{\textit{Case~\Roman{subsubsection}\hspace{-1pt}}}

\subsubsection{$b$ is hyperbolic}
Note, first, that there exist $(k,p) \in \Z^2$, with $k \neq 0$, such that $a^kb^p$ fixes $\ax(b)$ pointwise. Indeed, if $a$ is elliptic then $\ax(b) \subseteq \fix(a)$ and we may take $(k,p) = (1,0)$ (see Lemma~\ref{lem:commuting-types}.\ref{it:commtypes-hypell}); and if $a$ is hyperbolic then $\ax(a) = \ax(b)$ and we may choose $k,p \in \Z \setminus \{0\}$ so that $a^k$ and $b^{-p}$ translate $\ax(b)$ with the same translation length and the same direction. Similarly, there exist $(q,m) \in \Z^2$, with $m \neq 0$, such that $b^qc^m$ fixes $\ax(b)$ pointwise. Thus the subgroup $A = \langle a^kb^p,b^qc^m \rangle \leqslant G$ fixes $\ax(b)$ pointwise, so it must be free because $G$ acts on $\mathcal{T}$ with free edge stabilisers. Since $\psi$ is injective on the union  $\langle \alpha,\beta \rangle \cup \langle \beta,\gamma\rangle$ and $\langle \alpha,\beta \rangle \cap\langle \beta,\gamma\rangle=\langle \beta \rangle$ in $A(P_3)$, it follows that $\langle a^kb^p \rangle \cap \langle b^qc^m \rangle = \{1\}$ in $G$. Hence $A$ is not cyclic, implying that $A$ is freely generated by $a^kb^p$ and $b^qc^m$, as required.

\subsubsection{$a$ and $c$ are hyperbolic, $b$ is elliptic} Then $\ax(a) \cup \ax(c) \subseteq \fix(b)$ by Lemma~\ref{lem:commuting-types}.\ref{it:commtypes-hypell}.

Since $\psi$ is injective on $\langle \alpha,\beta \rangle \cup \langle \beta,\gamma\rangle $, we know that $\langle a,b \rangle \cong \Z^2$ and $\langle a,b \rangle \cap \langle c \rangle=\{1\}$ in $G$. Therefore, by 
Lemma~\ref{lem:bounded_intersec_of_axes}, the intersection $S := \ax(a) \cap \ax(c)$ must be bounded. Choose $k,m \in \Z \setminus \{0\}$ so that $a^k\,S \cap S = \varnothing$ and $c^m\,S \cap S = \varnothing$. Let $\pi_a\colon \cT \to \ax(a)$ and $\pi_c\colon \cT \to \ax(c)$ be the closest point projections, let $\Omega_a = \pi_c^{-1}(\pi_c(\ax(a)))$, and let $\Omega_c = \cT \setminus \Omega_a$ (see Figure~\ref{fig:P3}.(\subref{sfig:P3-heh})). We then have $c^{mn}\,\Omega_a \subseteq \Omega_c$, for all $n \in \Z \setminus \{0\}$. Moreover, $\pi_a(\Omega_c)$ consists either of a single point (if $S$ is empty or a point) or of the endpoints of $S$ (if $S$ is a non-trivial interval), implying that $a^{kn}\,\Omega_c \subseteq \Omega_a$, for all $n \in \Z \setminus \{0\}$. It then follows, by version~1 of the Ping-Pong Lemma (Proposition~\ref{prop:PPv1}), that $a^k$ and $c^m$ freely generate a free group, and therefore $\varphi_{k,1,m}|_{\langle \alpha,\gamma \rangle}$ is injective, as required.

\subsubsection{$a$, $b$ and $c$ are elliptic} Then $\fix(a) \cap \fix(b) = \{u\}$ and $\fix(b) \cap \fix(c) = \{v\}$, for some vertices $u,v$ of $\cT$, by  Lemma~\ref{lem:commuting-types}.\ref{it:commtypes-elliptic}, because $G$-stabilisers of edges in $\cT$ are free and $\langle a,b \rangle \cong \langle b,c \rangle \cong \Z^2$. Moreover, we have $u \neq v$, since otherwise $\langle a,b,c \rangle$ would fix a vertex in $\cT$, contradicting our assumption.

Let $e$ and $f$ be the first and the last edges of the geodesic segment $[u,v] \subseteq \fix(b)$ in $\cT$ respectively, in particular, $b\,e=e$ and $b\,f=f$. Let $\Omega_a$ be the connected component containing the vertex $u$, obtained after we remove the interior of the edge $e$ from $\cT$, and let $\Omega_c$ be the connected component containing $v$, after we remove the interior of $f$ from $\cT$ (see Figure~\ref{fig:P3}.(\subref{sfig:P3-eee})). It is easy to see that if $a^n\,\Omega_c \nsubseteq \Omega_a$, for some $n \in \Z$, then $a^n$ must fix $e$, and so $n=0$ since $\langle a^n,b \rangle \leqslant \st_G(e)$ must be both free and free abelian at the same time (by the injectivity of $\psi$ on $\langle \alpha, \beta\rangle$). Therefore, $a^n\,\Omega_c \subseteq \Omega_a$, for all $n \in \Z \setminus \{0\}$. Similarly, we have $c^n\,\Omega_a \subseteq \Omega_c$, for all $n \in \Z \setminus \{0\}$. It then follows from Proposition~\ref{prop:PPv1} that $\varphi_{1,1,1}|_{\langle \alpha,\gamma \rangle}$ is injective, as required.

\subsubsection{$a$ is hyperbolic, $b$ and $c$ are elliptic} Then $\ax(a) \subseteq \fix(b)$ and $\fix(b) \cap \fix(c) = \{v\}$, for some vertex $v$ of $\cT$ which is also the unique common fixed point of $c^n$ and $b$, for any $n \in \mathbb{Z} \setminus \{0\}$.

Let $w$ be the vertex of $\ax(a)$ closest to $v$ (it may happen that $w=v$), and let $e,f$ be the two edges on $\ax(a)$ starting at $w$ and facing in opposite directions. Observe that no non-zero power of $c$ can fix $e$ or $f$ because these edges are already fixed by $b$, because $\langle b,c^n \rangle \cong \Z^2$, for any $n \in \Z\setminus\{0\}$, by the assumptions on $\psi$, and because edge stabilisers are free.

After removing the interiors of the edges $e$ and $f$ from $\cT$ we get three connected components. Let $\Omega_c$ be the connected component containing $w$ (and $v$), and let $\Omega_a$ be the union of the remaining two connected components (each of them will contain an infinite ray of $\ax(a)$), see Figure~\ref{fig:P3}.(\subref{sfig:P3-hee}). It is then easy to see that $a^n \,\Omega_c \subseteq \Omega_a$, for any $n \in \mathbb{Z} \setminus \{0\}$. On the other hand, if $w=v$ then it is, a priori,  possible that $c^n \,\Omega_a \cap \Omega_a \neq \varnothing$, for some $n \neq 0$ (this can only happen if $c^n$ sends $e$ to $f$ or $f$ to $e$).  However, in that case we have $c^t \,\Omega_a \subseteq \Omega_c$, for all $t \in \Z \setminus \{0,\pm n\}$ (as otherwise either $c^{t-n}$ or $c^{t+n}$ would fix $e$ or $f$, which cannot happen as observed above). It follows that there is $m \in \Z \setminus \{0\}$ such that $c^{mn} \, \Omega_a \subseteq \Omega_c$, for all $n \in \Z \setminus \{0\}$. Therefore (by Proposition~\ref{prop:PPv1}) $a$ and $c^m$ freely generate a free group, i.e., $\varphi_{1,1,m}|_{\langle \alpha,\gamma \rangle}$ is injective, as required.

\subsubsection{$a$ and $b$ are elliptic, $c$ is hyperbolic} This is similar to Case~IV. \qedhere
\end{proof}

\section{Embedding \texorpdfstring{$A(P_4)$}{A(P\_4)}} \label{sec:P_4}
We now concentrate on proving  Proposition~\ref{prop:P_4-enhanced} from the Introduction. Throughout this section we will use the standard presentation \eqref{eq:pres-P_4} of $A(P_4)$. 
The following observation is an easy consequence of the normal form theorem for right-angled Artin groups (see \cite[Theorem~3.9]{Green}).

\begin{rem}\label{rem:expected_intersec} The subgroup $\langle \beta ,\gamma\rangle \leqslant A(P_4)$ is free abelian of rank $2$ and the following equalities are true:
\begin{align*}  \alpha \langle \beta,\gamma \rangle \alpha^{-1} \cap  \langle \beta,\gamma \rangle &=
\langle \beta,\alpha\gamma \alpha^{-1}\rangle  \cap  \langle \beta,\gamma \rangle =\langle \beta \rangle, \\
\langle \beta,\gamma \rangle \cap \delta^{-1}\langle \beta,\gamma \rangle \delta &=
\langle \beta,\gamma \rangle \cap  \langle \delta^{-1}\beta\delta,\gamma \rangle = \langle \gamma \rangle, 
\text{ and} \\
\alpha \langle \beta,\gamma \rangle \alpha^{-1} \cap  \delta^{-1}\langle \beta,\gamma \rangle \delta &=
\langle \beta,\alpha\gamma\alpha^{-1} \rangle \cap  \langle \delta^{-1}\beta\delta,\gamma \rangle = \{1\}.
\end{align*}    
\end{rem}

In some cases in order to show the injectivity of a group homomorphism $A(P_4) \to G$ we will apply the following criterion.

\begin{lemma}\label{lem:inj_on-xyz->inj}
Let $G$ be a group and let $\varphi\colon A(P_4) \to G$ be a group homomorphism. Then $\varphi$ is injective if and only if $\varphi(x_1)$, $\varphi(y_1)$ and $\varphi(z_1)$ freely generate a free subgroup of rank $3$ in $G$, where $x_1 = \alpha^{-1}\beta$, $y_1=\beta^{-1}\gamma$ and $z_1=\gamma^{-1}\delta$.
\end{lemma}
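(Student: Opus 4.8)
The statement to prove is Lemma~\ref{lem:inj_on-xyz->inj}: a homomorphism $\varphi\colon A(P_4)\to G$ is injective if and only if the images of $x_1=\alpha^{-1}\beta$, $y_1=\beta^{-1}\gamma$, $z_1=\gamma^{-1}\delta$ freely generate a rank-$3$ free subgroup of $G$. The plan is to relate the subgroup $\langle x_1,y_1,z_1\rangle$ of $A(P_4)$ to the Bestvina--Brady subgroup $H=\ker\Phi$, where $\Phi\colon A(P_4)\to\Z$ sends each generator to $1$. By Lemma~\ref{lem:BBfree}, $H$ is free of rank $3$, freely generated precisely by $x_1=\alpha^{-1}\beta$, $y_1=\beta^{-1}\gamma$, $z_1=\gamma^{-1}\delta$. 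So the ``only if'' direction is immediate: if $\varphi$ is injective then $\varphi|_H$ is an isomorphism onto $\varphi(H)$, hence $\varphi(x_1),\varphi(y_1),\varphi(z_1)$ freely generate a rank-$3$ free group.

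For the ``if'' direction, suppose $\varphi(x_1),\varphi(y_1),\varphi(z_1)$ freely generate a free group of rank $3$; equivalently, $\varphi|_H$ is injective. I want to upgrade this to injectivity of $\varphi$ on all of $A(P_4)$. The key structural fact is that $A(P_4)$ is an ascending HNN-extension (indeed a semidirect product) built from $H$: writing $t=\alpha$ (or any fixed generator), we have $A(P_4)=H\rtimes\langle t\rangle$ with $\Phi(t)=1$, and conjugation by $t$ is an injective (in fact bijective, since it's a splitting) endomorphism of $H$. So any $g\in\ker\varphi$ can be written uniquely as $g=t^r h$ with $r\in\Z$, $h\in H$. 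The plan is to show first $r=0$ and then $h=1$.

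To get $r=0$: apply $\Phi$ after noting that $\varphi(t)$ must have infinite order. Here is the subtlety — we need to know $\varphi(t)$ is not torsion, or more precisely that $\ker\varphi\subseteq H$. Since $\ker\varphi$ is a normal subgroup of $A(P_4)$ and $A(P_4)/H\cong\Z$, either $\ker\varphi\subseteq H$ or $\ker\varphi\cdot H/H$ is a finite-index (hence all of, or a proper finite-index) subgroup of $\Z$. If $\ker\varphi\not\subseteq H$, pick $g=t^r h\in\ker\varphi$ with $r\neq 0$ minimal positive; then for any $n$, $g^n\in\ker\varphi$ has $H$-component involving a product of $H$-conjugates, and conjugating $x_1$ by $g$ we would get $\varphi(x_1)$ conjugate-and-equal to a word forcing a nontrivial relation among $\varphi(x_1),\varphi(y_1),\varphi(z_1)$. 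More cleanly: if $t^r h\in\ker\varphi$ with $r\neq 0$, then $\varphi$ factors through $A(P_4)/\langle\!\langle t^r h\rangle\!\rangle$, and one checks this quotient has the property that the images of $x_1,y_1,z_1$ no longer generate a free group of rank $3$ (e.g.\ because the quotient group has a nontrivial element centralizing too much, or simply because $\langle x_1,y_1,z_1\rangle$ meets the normal closure nontrivially). Then $r=0$ forces $g=h\in H\cap\ker\varphi=\{1\}$, since $\varphi|_H$ is injective by hypothesis.

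The main obstacle I anticipate is precisely the step ruling out $\ker\varphi\not\subseteq H$ cleanly; the cheapest route is probably to observe that $\langle x_1,y_1,z_1\rangle=H$ has trivial intersection with every nontrivial normal subgroup of the form $\langle\!\langle t^r h\rangle\!\rangle$ for $r\neq 0$ — but that is false in general, so instead I would argue via centralizers: in $A(P_4)$, $\beta$ commutes with both $\alpha$ and $\gamma$, so $x_1^{-1}$ and $y_1$ have a common ``overlap'' structure that, combined with $\varphi(x_1),\varphi(y_1),\varphi(z_1)$ being free, pins down $\varphi(\beta)$, $\varphi(\gamma)$, etc., uniquely; concretely $\varphi(\beta)=\varphi(t)\varphi(x_1)$ and one shows $\varphi(t)$ is determined, forcing $\varphi$ to be the restriction-determined map which is injective. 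Given the length, I expect the actual paper proof to simply cite Lemma~\ref{lem:BBfree} and use that $A(P_4)/H\cong\Z$ is torsion-free together with injectivity of $\varphi$ on $H$ being inherited from the rank-$3$ freeness hypothesis, handling the $r\neq 0$ case by a short direct computation showing $\varphi$ would then kill some $H$-element.
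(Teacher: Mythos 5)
Your ``only if'' direction is correct and identical to the paper's (both rest on Lemma~\ref{lem:BBfree}), and your reduction of the ``if'' direction to showing $\ker(\varphi)\subseteq H$ is also sound, since $\ker(\varphi)\cap H=\{1\}$ follows from the hypothesis. But the crucial step --- ruling out an element $t^rh\in\ker(\varphi)$ with $r\neq 0$ --- is never actually carried out. You offer three candidate routes: (1) ``one checks this quotient has the property that the images of $x_1,y_1,z_1$ no longer generate a free group of rank $3$'', which is an assertion, not an argument; (2) the claim that $H$ meets every normal closure $\langle\!\langle t^rh\rangle\!\rangle$ with $r\neq 0$ nontrivially, which you yourself concede is false in general; and (3) a vague plan to ``pin down'' $\varphi(\beta)$, $\varphi(\gamma)$ from overlap structure, which does not engage with what injectivity requires (the values of $\varphi$ are given; nothing needs to be determined). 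So the proof has a genuine gap at its central point.

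The paper closes this gap with two ingredients you are missing. First, since $H\supseteq[A,A]$ is normal in $A=A(P_4)$ and $\ker(\varphi)$ is normal with $\ker(\varphi)\cap H=\{1\}$, the two subgroups commute elementwise: $[\ker(\varphi),H]\subseteq\ker(\varphi)\cap H=\{1\}$. Hence $\ker(\varphi)\subseteq C_A(H)\subseteq C_A(x_1y_1z_1)=C_A(\alpha^{-1}\delta)$. Second, by Servatius's description of centralisers in right-angled Artin groups, $C_A(\alpha^{-1}\delta)=\langle\alpha^{-1}\delta\rangle=\langle x_1y_1z_1\rangle\subset H$, so $\ker(\varphi)\subseteq H$ and therefore $\ker(\varphi)=\{1\}$. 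Your remark that ``the quotient group has a nontrivial element centralizing too much'' is gesturing in the right direction, but without the observation that disjoint normal subgroups commute, and without the centraliser theorem to bound $C_A(\alpha^{-1}\delta)$, the argument does not go through.
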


\begin{proof}
Let $H = \langle x_1,y_1,z_1 \rangle \leqslant A(P_4)$. By Lemma~\ref{lem:BBfree}, $H$ is the Bestvina--Brady subgroup of $A=A(P_4)$. In particular, $H$ contains the derived subgroup $[A,A]$, of $A$, and it is free, freely generated by $x_1$, $y_1$ and $z_1$.

Suppose, first, that $\varphi$ is injective. Then $\varphi|_H$ is also injective. In particular, since $x_1$, $y_1$ and $z_1$ freely generate a free group, so do $\varphi(x_1)$, $\varphi(y_1)$ and $\varphi(z_1)$, as required.

Conversely, suppose that $\varphi(x_1)$, $\varphi(y_1)$ and $\varphi(z_1)$ freely generate a free subgroup of $G$, and, thus, $\varphi|_H$ is injective, i.e., $\ker(\varphi) \cap H = \{1\}$. Then $\ker(\varphi)$ centralises $H$ in $G$ (because these are disjoint normal subgroups). In particular, we have $\ker(\varphi) \subseteq C_{A}(x_1y_1z_1) = C_{A}(\alpha^{-1}\delta)$. But, from the description of centralisers in right-angled Artin groups due to Servatius \cite{Servatius}, it follows that $C_{A}(\alpha^{-1}\delta) = \langle \alpha^{-1}\delta \rangle = \langle x_1y_1z_1\rangle \subset H$, implying that $\ker(\varphi) = \ker(\varphi) \cap H = \{1\}$, as required.
\end{proof}

In view of Remark~\ref{rem:expected_intersec}, Proposition~\ref{prop:P_4-enhanced} can be restated as follows.

\begin{prop}\label{prop:n=4} Let $G$ be a one-relator group and let $\psi\colon A(P_4) \to G$ be a homomorphism such that $\psi$ is injective on the union 
\begin{equation}\label{eq:union_of_3}
    \alpha \langle \beta, \gamma \rangle \alpha^{-1} \cup \langle \beta, \gamma \rangle \cup \delta^{-1} \langle \beta, \gamma \rangle \delta , 
\end{equation}
of three free abelian subgroups of rank $2$ in $A(P_4)$. Then there exist $k,l,m,n \in \Z\setminus\{0\}$ such that the elements $\psi(\underline{\alpha})^k$, $\psi(\beta)^l$, $\psi(\gamma)^m$ and $\psi(\underline{\delta})^n$ generate a copy of $A(P_4)$ in $G$ in the natural way, where $\underline{\alpha}=\alpha \gamma \alpha^{-1}$ and $\underline{\delta}=\delta^{-1}\beta \delta$ in $G$.   
\end{prop}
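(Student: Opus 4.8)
The plan is to deduce Proposition~\ref{prop:n=4} (equivalently Proposition~\ref{prop:P_4-enhanced}) from the machinery already developed for $A(P_3)$ together with the action of $G$ on the Bass--Serre tree $\cT$ of its HNN-splitting over $G_{s-1}$. Write $a=\psi(\alpha)$, $b=\psi(\beta)$, $c=\psi(\gamma)$, $d=\psi(\delta)$, and set $b'=aca^{-1}$ (a copy of $\psi(\underline\alpha)$ is then $aca^{-1}\cdot$-conjugate data; more precisely $\underline\alpha=\alpha\gamma\alpha^{-1}$, $\underline\delta=\delta^{-1}\beta\delta$). The injectivity hypothesis on the union \eqref{eq:union_of_3} guarantees that each of the three subgroups $\langle aca^{-1},b\rangle$, $\langle b,c\rangle$, $\langle b, d^{-1}bd\rangle$ is a copy of $\Z^2$ sitting inside $G$, that they pairwise intersect exactly in the ``expected'' cyclic subgroups ($\langle b\rangle$ between the first two, $\langle c\rangle$ between the last two), and that the outer two meet trivially. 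Since $G_0$ is a free product of cyclic groups it contains no $\Z^2$, so $s>0$; by induction on $s$ we may assume $\langle a,b,c,d\rangle$ (indeed the relevant subgroup $\langle aca^{-1},b,c,d^{-1}bd\rangle$) is not conjugate into $G_{s-1}$, hence does not fix a vertex of $\cT$.

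First I would invoke the already-proved $P_3$ statement twice. Applying Proposition~\ref{prop:P3} to the sub-path $\langle\alpha,\beta,\gamma\rangle$ — whose image generates $\langle aca^{-1}$-type data; rather, to $\langle\underline\alpha,\beta,\gamma\rangle\cong A(P_3)$ with $\underline\alpha=\alpha\gamma\alpha^{-1}$ — yields exponents so that $(aca^{-1})^{k}$, $b^{l}$, $c^{m}$ generate $A(P_3)$ naturally; symmetrically, applying it to $\langle\beta,\gamma,\underline\delta\rangle$ gives exponents so that $b^{l'}$, $c^{m'}$, $(d^{-1}bd)^{n}$ generate $A(P_3)$. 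Passing to a common multiple of $l,l'$ and of $m,m'$, we may assume the two copies of $A(P_3)$ overlap consistently on the $\langle b^{l},c^{m}\rangle\cong\Z^2$ piece. The task is then to show that these two copies of $A(P_3)$, glued along $\Z^2$, assemble into a copy of $A(P_4)$ — equivalently, after Lemma~\ref{lem:inj_on-xyz->inj}, that the three elements $\varphi(x_1),\varphi(y_1),\varphi(z_1)$ freely generate $F_3$, where $x_1$ involves $(aca^{-1})^{k}$ and $b^{l}$, $y_1$ involves $b^{l}$ and $c^{m}$, and $z_1$ involves $c^{m}$ and $(d^{-1}bd)^{n}$.

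The crux is to run a ping-pong argument on $\cT$ (and, where the relevant elements are all elliptic, on the Cayley tree of an edge stabiliser, using Lemma~\ref{lem:large powers}) to see that the two "ends" of the $P_4$ — the element built from $a,c$ on one side and the element built from $d$ on the other — interact freely with the middle $\Z^2$. I would split into cases according to the isometry types on $\cT$ of $b$ and $c$ (the two middle generators), exactly as in the proof of Proposition~\ref{prop:P3}, and in each case use the tools of Subsection~\ref{subsec:trees}: Lemma~\ref{lem:commuting-types} to locate $\ax$ and $\fix$ sets of commuting pairs, Lemma~\ref{lem:bounded_intersec_of_axes} to force certain axis-intersections to be bounded, Lemma~\ref{lem:fixed_end} to rule out the degenerate configuration where too many elements share a fixed end (this would force a $\Z^2$ containing all of them, contradicting hypothesis (iii)), and Proposition~\ref{prop:bagh} to control stabilisers of adjacent edges. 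The hypotheses (i)--(iii) of Proposition~\ref{prop:P_4-enhanced} are precisely what prevent unwanted coincidences of axes/fixed trees: (iii) in particular keeps $\ax$ or $\fix$ of the $a$-side element and the $d$-side element from overlapping in an unbounded set. Having separated the configuration, choose ping-pong domains $\Omega_1,\Omega_2,\Omega_3$ in $\cT$ (nested half-trees determined by the various projection maps, as in Figure~\ref{fig:P3}) and raise $x_1,y_1,z_1$ to a common high power so that Proposition~\ref{prop:PPv1} applies, giving the desired $F_3$; the only subtlety is that raising $y_1$ to a power must be absorbed into further multiplying $l$ and $m$, so one should set up the exponents $k,l,m,n$ only at the very end after all the power-raising demands have been collected.

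The main obstacle I anticipate is the case analysis when $b$ and $c$ are both \emph{elliptic} with large intersection $\fix(b)\cap\fix(c)$: here the $a$-side and $d$-side elements may also be elliptic and fix overlapping subtrees, and one must carefully locate four (or more) branch points — the meeting points of $\fix(aca^{-1})$, $\fix(b)$, $\fix(c)$, $\fix(d^{-1}bd)$ — along a common segment, verify they are genuinely distinct (using hypothesis (iii) and that $\langle a,b,c,d\rangle$ does not fix a vertex), and then peel off edges at each of them to build the three ping-pong domains. Managing the bookkeeping of which powers are needed (the $c^m$-side power must serve in both $y_1$ and $z_1$, while not breaking the $x_1$ ping-pong) is where the argument becomes delicate but not conceptually hard; everything else is a mild elaboration of the proof of Proposition~\ref{prop:P3}.
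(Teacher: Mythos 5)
Your overall scaffolding (induction on the hierarchy, passing to $\underline{a}=aca^{-1}$ and $\underline{d}=d^{-1}bd$, a case division by the isometry types of $b$ and $c$ on $\cT$, and the criterion of Lemma~\ref{lem:inj_on-xyz->inj}) matches the paper, and your plan does work in the two ``spread out'' configurations: when $b,c$ are both hyperbolic, all of $x_1,y_1,z_1$ become elliptic elements of a common free pointwise stabiliser and Lemma~\ref{lem:large powers} applies; and when $b,c$ are both elliptic with $\fix(\underline{a})\cap\fix(c)=\varnothing=\fix(b)\cap\fix(\underline{d})$, the three branch vertices are distinct and a three-domain ping-pong succeeds. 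The gap is in the remaining configurations, which the hypotheses do \emph{not} exclude. Condition (iii) of Proposition~\ref{prop:P_4-enhanced} only forces $a\langle b,c\rangle a^{-1}\cap d^{-1}\langle b,c\rangle d=\{1\}$; it does not prevent $\fix(\underline{a})\cap\fix(c)\neq\varnothing$, in which case the branch points $\fix(\underline{a})\cap\fix(b)$ and $\fix(b)\cap\fix(c)$ coincide at a single vertex $u$. Then $x_1$ and $y_1$ both fix $u$, so there are no distinct branch points at which to ``peel off edges'' and separate their ping-pong domains; your assertion that hypothesis (iii) makes the branch points genuinely distinct is false. Similarly, when $b$ is hyperbolic and $c$ elliptic, $x_1=\underline{a}^{-k}b^{l}$ and $y_1=b^{-l}c^{m}$ are both hyperbolic with axis $\ax(b)$, so no three-domain ping-pong on $\cT$ can split off $\langle x_1\rangle*\langle y_1\rangle$; and since $z_1$ is hyperbolic with a different axis, the all-elliptic trick of Lemma~\ref{lem:large powers} is unavailable as well. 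Your fallback of gluing two copies of $A(P_3)$ along $\Z^2=\langle b,c\rangle$ also does not yield a valid application of Proposition~\ref{prop:PPv2}: in the degenerate elliptic case $\fix(b)\cap\fix(c)$ is a single vertex, so $b$ moves the first edge of the separating segment and the amalgam $\langle b,c\rangle$ fails to preserve the two domains.

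What the paper does in these two cases --- and what is missing from your proposal --- is to use the asymmetric visual splitting $A(P_4)\cong A(P_3)*_{\langle\gamma\rangle}\Z^2$: it takes $G_1=\langle\underline{a},b,c\rangle$ (upgraded to a genuine copy of $A(P_3)$ by first invoking Proposition~\ref{prop:P3}) amalgamated with $G_2=\langle c,\underline{d}\rangle\cong\Z^2$ over the \emph{cyclic} subgroup $H=\langle c\rangle$, which does fix the whole separating segment (respectively, both axes) pointwise and therefore preserves both ping-pong domains. Verifying the condition for $g_1\in G_1\setminus H$ is then the real work: one must rule out that such a $g_1$ fixes the relevant edge, and this is done with Bagherzadeh's theorem (Proposition~\ref{prop:bagh}) together with the fact that $c$ is not a proper power in $A(P_3)$, plus, in the hyperbolic-$b$ case, a comparison of translation lengths with $\mathrm{diam}(\ax(b)\cap\ax(\underline{d}))$, which is finite by Lemma~\ref{lem:bounded_intersec_of_axes}. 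None of this is a routine elaboration of the $P_3$ argument, and your proposal contains no substitute for it.
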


\begin{proof} Denote $a=\psi(\alpha)$, $b=\psi(\beta)$, $c=\psi(\gamma)$ and $d=\psi(\delta)$ in $G$. We also set $\aa=aca^{-1}=\psi(\underline{\alpha})$ and $\dd=d^{-1} b d=\psi(\underline{\delta})$.
Arguing by induction on the number $s$ in the sequence~\eqref{eq:HNN} for $G$, we may assume that $s \geq 1$ and that the subgroup $\langle a,b,c,d \rangle \leqslant G$ is not conjugate into $G_{s-1}$. In geometric terms, this means that $\langle a,b,c,d \rangle$ does not fix any point in $\cT$, where $\cT$ is the Bass--Serre tree for the splitting of $G$ as an HNN-extension of $G_{s-1}$.

Suppose the elements $b,c$ both fix some vertex $v$ of $\cT$. If $\aa\,v=v$ then $c\,(a^{-1}\,v)=a^{-1}\,v$, i.e., $a^{-1}\,v \in \fix(c)$. Recall that $\fix(b)$ is invariant under the action of $a^{-1} \in C_G(b)$, by Lemma~\ref{lem:commuting-types}.\ref{it:commtypes-preserved}, so $a^{-1}\,v \in \fix(b)$. Thus $a^{-1}\,v \in \fix(b) \cap \fix(c)=\{v\}$ by Lemma~\ref{lem:commuting-types}.\ref{it:commtypes-elliptic}, hence $a^{-1}\,v=v$, i.e., $v \in \fix(a)$. Similarly, if $\dd$ fixes $v$ then the same must be true for $d$. Therefore, we can further assume that the subgroup $\langle \aa,b,c,\dd \rangle$ does not fix any vertex of $\cT$.

Note that the isometry of $\cT$ induced by $\aa$ has the same type as the one induced by $c$ since these elements are conjugate in $G$; similarly, $\dd$ has the same type as $b$. By symmetry, after swapping $(a,b) \leftrightarrow (d,c)$ if necessary, we may assume that the actions of $\aa$, $b$, $c$ and $\dd$ on $\cT$ fall into one of the following four cases:
\begin{enumerate}[label=\arabic*.]
\item $b$ and $c$ are hyperbolic;
\item $b$ and $c$ are elliptic, $\fix(\aa) \cap \fix(c) = \varnothing = \fix(b) \cap \fix(\dd)$;
\item $b$ and $c$ are elliptic, $\fix(\aa) \cap \fix(c) \neq \varnothing$;
\item $b$ is hyperbolic, $c$ is elliptic.
\end{enumerate}

We now consider each of these cases in order.

\renewcommand{\thesubsubsection}{\textit{Case~\arabic{subsubsection}\hspace{-1pt}}}

\medskip
\subsubsection{$b$ and $c$ are hyperbolic.}~

As observed  above, the elements $\aa$ and $\dd$ will also be hyperbolic.
Since $[\aa,b] = [b,c] = [c,\dd] = 1$, Lemma~\ref{lem:commuting-types} implies that $\ax(\aa) = \ax(b) = \ax(c) = \ax(\dd) = \ell$. There then exist integers $k,l,m,n \in \Z \setminus \{0\}$ such that $\aa^k$, $b^l$, $c^m$ and $\dd^n$ translate $\ell$ in the same direction and with the same translation length. Therefore, the elements $x = \aa^{-k}b^l$, $y = b^{-l}c^m$ and $z = c^{-m}\dd^n$ are elliptic and fix $\ell$ pointwise.

We claim that there exists an integer $r \in \N$ that $x^r$, $y^r$ and $z^r$ freely generate a free subgroup of $G$. Indeed, the group $H = \langle x,y,z \rangle$ fixes an edge in $\cT$, so it is free and in particular hyperbolic. Moreover, by the assumption of injectivity of $\psi$ on the union \eqref{eq:union_of_3} together with Remark~\ref{rem:expected_intersec}, the subgroups $\langle x \rangle$, $\langle y \rangle$ and $\langle z \rangle$ of $H$ are infinite and have pairwise trivial intersections. Lemma~\ref{lem:large powers} now implies the existence of $r \in \N$ as desired.

Now, since $[\aa,b] = [b,c] = [c,\dd] = 1$, the assignment
\[
\varphi(\alpha) = \aa^{kr}, \quad
\varphi(\beta) = b^{lr}, \quad
\varphi(\gamma) = c^{mr} \quad\text{and}\quad
\varphi(\delta) = \dd^{nr}
\]
extends to a well-defined group homomorphism $\varphi\colon A(P_4) \to G$. Moreover, since $\varphi(\alpha^{-1}\beta) = x^r$, $\varphi(\beta^{-1}\gamma) = y^r$ and $\varphi(\gamma^{-1}\delta) = z^r$ freely generate a free subgroup of $G$, it follows from Lemma~\ref{lem:inj_on-xyz->inj} that $\varphi$ is injective, as required.

\medskip
\subsubsection{$b$ and $c$ are elliptic, $\fix(\aa) \cap \fix(c) = \varnothing = \fix(b) \cap \fix(\dd)$}~
\label{case:-ee-}

The subgroups $\langle \aa,b \rangle=a\langle b,c \rangle a^{-1}$ and $\langle b,c \rangle$ are isomorphic to $\Z^2$ in $G$, by the assumptions and Remark~\ref{rem:expected_intersec}, therefore, by Lemma~\ref{lem:commuting-types}, we have $\fix(\aa) \cap \fix(b) = \{u\}$ and $\fix(b) \cap \fix(c) = \{v\}$, for some vertices $u,v$ of  $\cT$. Note that $u \neq v$ because $\fix(\aa) \cap \fix(c) = \varnothing$ in the present case.

Similarly, $\fix(c) \cap \fix(\dd) = \{w\}$, for some vertex $w$ in $\cT$, and $v \neq w$. Note that since $\fix(b)$ (respectively, $\fix(c)$) contains $u$ and $v$ (respectively, $v$ and $w$), it also contains the geodesic segment in $\cT$ joining those two vertices. It follows that $[v,u] \cap [v,w]=\{v\}$ in $\cT$.

Let $e_1, e_2,e_3,e_4$ the first edges of the geodesic segments $[u,v]$, $[v,u]$, $[v,w]$ and $[w,v]$ respectively. Let $\Omega_1$, $\Omega_2$ and $\Omega_3$ be the connected components containing  $u$, $v$ and $w$, respectively, that we obtain after removing the interiors of the edges $e_1,e_2,e_3,e_4$ from $\cT$ (see Figure~\ref{fig:all-elliptic}). By Lemma~\ref{lem:inj_on-xyz->inj}, it is enough to show that $x = \aa^{-1}b$, $y= b^{-1}c$ and $z = c^{-1}\dd$ freely generate a free subgroup of $G$.

\begin{figure}[ht]
\centering
\begin{tikzpicture}
\draw [dashed,thick,yellow!60!black!50!red,fill=yellow!62!red!13] (-3.35,0) to[out=90,in=-90] (-2.25,1.4) to[out=90,in=0] (-4,3) to[out=180,in=90] (-6.2,0) to[out=-90,in=180] (-3.25,-1.75) to[out=0,in=-60] (-3,-0.5) to[out=120,in=-90] cycle;
\node [yellow!60!black!50!red] at (-4.5,-1.25) {$\Omega_1$};
\draw [dashed,thick,yellow!54!green!65!black,fill=yellow!56!green!15] (-0.15,0) to[out=90,in=-90] (-1.2,1.4) to[out=90,in=180] (0,3) to[out=0,in=90] (1.2,1.4) to[out=-90,in=90] (0.15,0) to[out=-90,in=135] (0.4,-0.2) to[out=-45,in=45] (1,-1.5) to[out=-135,in=0] (0,-2) to[out=180,in=-45] (-1,-1.5) to[out=135,in=-135] (-0.4,-0.2) to[out=45,in=-90] cycle;
\node [yellow!54!green!65!black] at (0,-1.75) {$\Omega_2$};
\draw [dashed,thick,green!70!black!50!blue,fill=green!57!blue!12] (3.35,0) to[out=90,in=-90] (2.25,1.4) to[out=90,in=180] (4,3) to[out=0,in=90] (6.2,0) to[out=-90,in=0] (3.25,-1.75) to[out=180,in=-120] (3,-0.5) to[out=60,in=-90] cycle;
\node [green!70!black!50!blue] at (4.5,-1.25) {$\Omega_3$};
\draw [very thick,red,fill=red!30] (-3.5,0) to[out=165,in=0] (-5,0.7) to[out=180,in=90] (-6,0) to[out=-90,in=180] (-5,-0.7) to[out=0,in=-165] (-3.5,0);
\node [red] at (-5,0) {$\fix(\aa)$};
\draw [very thick,yellow!60!black,fill=yellow!50,rotate around={15:(-3.5,0)}] (-3.5,0) to[out=-75,in=90] (-3.25,-1) to[out=-90,in=0] (-3.5,-1.5) to[out=180,in=-90] (-3.75,-1) to[out=90,in=-105] (-3.5,0);
\draw [very thick,yellow!60!black,fill=yellow!50,rotate around={-30:(0,0)}] (0,0) to[out=-75,in=90] (0.25,-1) to[out=-90,in=0] (0,-1.5) to[out=180,in=-90] (-0.25,-1) to[out=90,in=-105] (0,0);
\draw [very thick,yellow!60!black,fill=yellow!50] (-2.7,0) to[out=90,in=180] (-1.75,0.6) to[out=0,in=90] (-0.8,0) to[out=-90,in=0] (-1.75,-0.6) to[out=180,in=-90] cycle;
\draw [ultra thick,yellow!60!black] (-3.5,0) -- (-2.7,0) node [pos=0.6,above,yshift=-3pt] {$e_1$};
\draw [ultra thick,yellow!60!black] (0,0) -- (-0.8,0) node [pos=0.6,above,yshift=-3pt] {$e_2$};
\node [yellow!60!black] (fixb) at (-1.75,-2) {$\fix(b)$};
\draw [->,yellow!60!black] (fixb.west) to[bend left] (-3.1,-1.5);
\draw [->,yellow!60!black] (fixb.east) to[bend right] (-0.7,-1.4);
\draw [->,yellow!60!black] (fixb) to (-1.75,-0.7);
\draw [very thick,green!70!black,fill=green!40,rotate around={-15:(3.5,0)}] (3.5,0) to[out=-105,in=90] (3.25,-1) to[out=-90,in=180] (3.5,-1.5) to[out=0,in=-90] (3.75,-1) to[out=90,in=-75] (3.5,0);
\draw [very thick,green!70!black,fill=green!40,rotate around={30:(0,0)}] (0,0) to[out=-75,in=90] (0.25,-1) to[out=-90,in=0] (0,-1.5) to[out=180,in=-90] (-0.25,-1) to[out=90,in=-105] (0,0);
\draw [very thick,green!70!black,fill=green!40] (2.7,0) to[out=90,in=0] (1.75,0.6) to[out=180,in=90] (0.8,0) to[out=-90,in=180] (1.75,-0.6) to[out=0,in=-90] cycle;
\draw [ultra thick,green!70!black] (3.5,0) -- (2.7,0) node [pos=0.6,above,yshift=-3pt] {$e_4$};
\draw [ultra thick,green!70!black] (0,0) -- (0.8,0) node [pos=0.6,above,yshift=-3pt] {$e_3$};
\node [green!70!black] (fixc) at (1.75,-2) {$\fix(c)$};
\draw [->,green!70!black] (fixc.east) to[bend right] (3.1,-1.5);
\draw [->,green!70!black] (fixc.west) to[bend left] (0.7,-1.4);
\draw [->,green!70!black] (fixc) to (1.75,-0.7);
\draw [very thick,blue,fill=blue!30] (3.5,0) to[out=15,in=180] (5,0.7) to[out=0,in=90] (6,0) to[out=-90,in=0] (5,-0.7) to[out=180,in=-15] (3.5,0);
\node [blue] at (5,0) {$\fix(\dd)$};
\draw [very thick,yellow!60!black!50!red,fill=yellow!62!red!40] (-3.5,0) to[out=75,in=-90] (-2.5,1.5) to[out=90,in=0] (-3.5,2.5) to[out=180,in=90] (-4.5,1.5) to[out=-90,in=105] (-3.5,0);
\node [yellow!60!black!50!red] at (-3.5,1.5) {$\fix(\aa^{-1}b)$};
\draw [very thick,yellow!54!green!65!black,fill=yellow!56!green!45] (0,0) to[out=105,in=-90] (-1,1.5) to[out=90,in=180] (0,2.5) to[out=0,in=90] (1,1.5) to[out=-90,in=75] (0,0);
\node [yellow!54!green!65!black] at (0,1.75) {$\fix(b^{-1}c)$};
\draw [very thick,green!70!black!50!blue,fill=green!57!blue!35] (3.5,0) to[out=105,in=-90] (2.5,1.5) to[out=90,in=180] (3.5,2.5) to[out=0,in=90] (4.5,1.5) to[out=-90,in=75] (3.5,0);
\node [green!70!black!50!blue] at (3.5,1.5) {$\fix(c^{-1}\dd)$};
\fill (-3.5,0) circle (2pt) node [below,xshift=-4pt] {$u$};
\fill (0,0) circle (2pt);
\draw [->] (0,0.6) node [above] {$v$} -- (0,0.2);
\fill (3.5,0) circle (2pt) node [below,xshift=4pt] {$w$};
\end{tikzpicture}
\caption{The configuration in \ref{case:-ee-}.}
\label{fig:all-elliptic}
\end{figure}
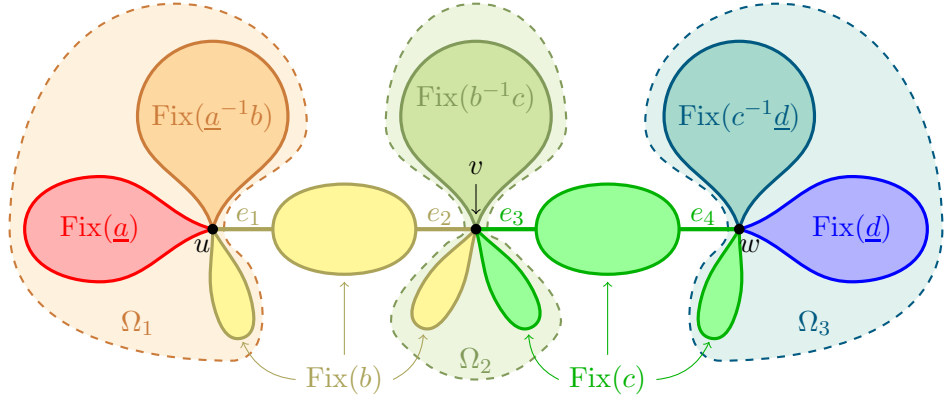

Note, first, that for any $k \in \Z \setminus \{0\}$, we have  $\fix(x^k) \cap \fix(b) = \{u\}$ by Lemma~\ref{lem:commuting-types}, because $\langle x^k,b \rangle=\langle \aa^k,b \rangle \cong \Z^2$. Consequently, $x^k$ does not fix $e_1$, and therefore $x^k\,(\Omega_2 \cup \Omega_3) \subseteq \Omega_1$, for all $k \in \Z \setminus \{0\}$. Similarly, we have $z^k\,(\Omega_1 \cup \Omega_2) \subseteq \Omega_3$, for all $k \in \Z \setminus \{0\}$.

We further claim that $y^k\,(\Omega_1 \cup \Omega_3) \subseteq \Omega_2$, for all $k \in \Z \setminus \{0\}$. Indeed, as in the previous paragraph, we have $\fix(y^k) \cap \fix(b) = \{v\}=\fix(y^k) \cap \fix(c)$, implying that $y^k$ does not fix the edges $e_2$ and $e_3$. Furthermore, $y^k$ cannot send $e_2$ to $e_3$, since in that case we would have both $y^k\,e_2=e_3 \in \fix(c) \cap \fix(b)$ (as $y^k$ preserves $\fix(b)$ setwise by Lemma~\ref{lem:commuting-types}.\ref{it:commtypes-preserved}), contradicting the fact that $\fix(b) \cap \fix(c) = \{v\}$. It follows that $y^k\,\Omega_1 \subseteq \Omega_2$, and a similar argument shows that $y^k\,\Omega_3 \subseteq \Omega_2$; this proves the claim.

Now let $G_1 = \langle x \rangle$, $G_2 = \langle y \rangle$ and $G_3 = \langle z \rangle$. Then, as shown above, we have $g_i\,\Omega_j \subseteq \Omega_i$, for all $i \neq j$ and all $1 \neq g_i \in G_i$. In particular, by version~1 of the Ping-Pong Lemma (Proposition~\ref{prop:PPv1}) we have $\langle x,y,z \rangle = G_1 * G_2 * G_3$, and so $x$, $y$ and $z$ freely generate a free group, as required. Therefore, by Lemma~\ref{lem:inj_on-xyz->inj}, the elements $\aa,b,c,\dd \in G$ generate a copy of $A(P_4)$ in the natural way.

\medskip
\subsubsection{$b$ and $c$ are elliptic and $\fix(\aa) \cap \fix(c) \neq \varnothing$}~
\label{case:+ee-}

As before, using Lemma~\ref{lem:commuting-types} and the assumption that $\psi$ is injective on the subset \eqref{eq:union_of_3}, we can deduce  that $\fix(\aa) \cap \fix(b) = \{u\} = \fix(b) \cap \fix(c)$, for some vertex $u$ in $\cT$. Moreover, using Proposition~\ref{prop:P3}, we may replace $\aa$, $b$ and $c$ with their powers if necessary to assume that the elements $\aa$, $b$ and $c$ generate a copy of $A(P_3)$ in the natural way.

Recall that the injectivity assumptions on $\psi$ imply that $c$ and $\dd$ freely generate a free abelian group of rank $2$ in $G$.
Therefore $\dd=d^{-1}bd$ is elliptic and  $\fix(\dd) \cap \fix(c) = \{v\}$, for some vertex $v$ in $\cT$. Moreover, we have $v \neq u$ because the subgroup $\langle \aa,b,c,\dd \rangle$ does not fix any vertex of $\cT$.

Let $e$ and $f$ be the first and the last edges of  the geodesic segment $[u,v]$ in $\cT$, and let $\Omega_1$ and $\Omega_2$ be the connected components  containing $u$ and $v$, respectively, obtained from $\cT$ after removing the interiors of the edges $e$ and $f$; see Figure~\ref{fig:all-elliptic-amalg}. Let $G_1 = \langle \aa,b,c \rangle \cong A(P_3)$, $G_2 = \langle c,\dd \rangle \cong \Z^2$ and $H = \langle c \rangle \cong \Z$. We claim that $G_1$, $G_2$, $H$, $\Omega_1$ and $\Omega_2$ satisfy the assumptions of version~2 of the Ping-Pong Lemma (Proposition~\ref{prop:PPv2}). In this case it will follow that we have a splitting $\langle \aa,b,c,\dd \rangle \cong G_1 *_H G_2$, implying that $\aa,b,c,\dd$ generate $A(P_4)$ in the natural way.

\begin{figure}[ht]
\centering
\begin{tikzpicture}
\draw [dashed,thick,yellow!60!black,fill=yellow!17] (-2.5,1.5) to[out=60,in=150] (0,2) to[out=-30,in=90] (0.5,1) to[out=-90,in=90] (0.15,0) to[out=-90,in=90] (0.9,-1.5) to[out=-90,in=0] (-0.25,-2.75) to[out=180,in=-120] cycle;
\node [yellow!60!black] at (-1.5,-1) {$\Omega_1$};
\draw [dashed,thick,green!70!black!50!blue,fill=green!57!blue!12] (3.35,0) to[out=90,in=-90] (3,1) to[out=90,in=180] (3.75,2) to[out=0,in=90] (5.5,0) to[out=-90,in=0] (3.75,-2.75) to[out=180,in=-90] (2.6,-1.5) to[out=90,in=-90] cycle;
\node [green!70!black!50!blue] at (4.8,-0.5) {$\Omega_2$};
\draw [very thick,red,fill=red!30,rotate=60] (0,0) to[out=45,in=-90] (0.7,1.5) to[out=90,in=0] (0,2.5) to[out=180,in=90] (-0.7,1.5) to[out=-90,in=105] (0,0);
\node [red] at (-1.2,0.8) {$\fix(a)$};
\draw [very thick,yellow!60!black,fill=yellow!50,rotate=90] (0,0) to[out=165,in=0] (-1.5,0.7) to[out=180,in=90] (-2.5,0) to[out=-90,in=180] (-1.5,-0.7) to[out=0,in=-165] (0,0);
\node [yellow!60!black] at (0,-1.5) {$\fix(b)$};
\draw [very thick,green!70!black,fill=green!40,rotate around={180:(0,0)}] (0,0) to[out=-75,in=90] (0.25,-1) to[out=-90,in=0] (0,-1.5) to[out=180,in=-90] (-0.25,-1) to[out=90,in=-105] (0,0);
\draw [very thick,green!70!black,fill=green!40,rotate around={180:(3.5,0)}] (3.5,0) to[out=-75,in=90] (3.75,-1) to[out=-90,in=0] (3.5,-1.5) to[out=180,in=-90] (3.25,-1) to[out=90,in=-105] (3.5,0);
\draw [very thick,green!70!black,fill=green!40] (2.7,0) to[out=90,in=0] (1.75,0.6) to[out=180,in=90] (0.8,0) to[out=-90,in=180] (1.75,-0.6) to[out=0,in=-90] cycle;
\draw [ultra thick,green!70!black] (0,0) -- (0.8,0) node [pos=0.6,above,yshift=-2pt] {$e$};
\draw [ultra thick,green!70!black] (3.5,0) -- (2.7,0) node [pos=0.6,above,yshift=-2pt] {$f$};
\node [green!70!black] (fixc) at (1.75,2) {$\fix(c)$};
\draw [->,green!70!black] (fixc.east) to[bend left] (3.35,1.55);
\draw [->,green!70!black] (fixc.west) to[bend right] (0.15,1.55);
\draw [->,green!70!black] (fixc) to (1.75,0.7);
\draw [very thick,blue,fill=blue!30,rotate around={-90:(3.5,0)}] (3.5,0) to[out=15,in=180] (5,0.7) to[out=0,in=90] (6,0) to[out=-90,in=0] (5,-0.7) to[out=180,in=-15] (3.5,0);
\node [blue] at (3.5,-1.5) {$\fix(\dd)$};
\fill (0,0) circle (2pt) node [below,xshift=-6pt,yshift=3pt] {$u$};
\fill (3.5,0) circle (2pt) node [right] {$v$};
\end{tikzpicture}
\caption{The configuration in \ref{case:+ee-}.}
\label{fig:all-elliptic-amalg}
\end{figure}

Consider any element $g_2 \in G_2 \setminus H$, so that $g_2 = c^k\dd^l$, for some $k,l \in \Z$ with $l \neq 0$. Then $\langle c,g_2 \rangle=\langle c,\dd^l \rangle \cong \Z^2$ in $G$. By Lemma~\ref{lem:commuting-types}, it follows that $\fix(g_2) \cap \fix(c) = \{v\}$, so $g_2$ does not fix the edge $f$. In particular, we have $g_2\,\Omega_1 \subseteq \Omega_2$, as required. Note that it also follows that $g_2$ does not fix $u$ and therefore $g_2 \notin G_1$; thus $G_1 \cap G_2 \subseteq H$, and since clearly $H \subset G_1$ and $H \subset G_2$, we have $G_1 \cap G_2 = H$ in $G$.

Suppose, for a contradiction, that $g_1\,\Omega_2 \not\subseteq \Omega_1$, for some $g_1 \in G_1 \setminus H$. This means that $g_1\,e=e$. Now, since $c$ fixes $e$ and $\fix(b) \cap \fix(c) = \{u\}$, it follows that $b\,e \neq e$; therefore, by Proposition~\ref{prop:bagh}, the group $C = \st_G(e) \cap \st_G(b\,e)$ is cyclic. However, since $b$ is central in $G_1$, it commutes with both $c$ and $g_1$, implying that $b$ preserves $\fix(c)$ and $\fix(g_1)$ setwise (see Lemma~\ref{lem:commuting-types}.\ref{it:commtypes-preserved}), and therefore $\langle c,g_1 \rangle \subseteq C$. This is impossible, since $g_1 \notin H=\langle c \rangle$ and since $c$ is not a proper power in $G_1=\langle \aa,b,c \rangle \cong A(P_3)$. Thus $g_1\,\Omega_2 \subseteq \Omega_1$, as required.

Finally, it is clear that any element of $H = \langle c \rangle$ fixes $e$ and $f$, and so it preserves $\Omega_1$ and $\Omega_2$ setwise. This proves the claim, implying that we can apply Proposition~\ref{prop:PPv2} to conclude that $\aa,b,c,\dd$ generate a copy of $A(P_4)$ in the natural way.

\medskip
\subsubsection{$b$ is hyperbolic and $c$ is elliptic}~
\label{case:?he?}

In view of Proposition~\ref{prop:P3} we can replace the elements $\aa$, $b$ and $c$ by their powers to assume that the triple $\aa,b,c$ generates a copy of $A(P_3)$ in $G$ in the natural way. 

Lemma~\ref{lem:commuting-types}.\ref{it:commtypes-hypell} implies that $\ax(b) \subseteq \fix(c) \cap \fix(\aa)$. The element $\dd=d^{-1} b d$ is also hyperbolic and  the intersection $S = \ax(b) \cap \ax(\dd)$ is bounded by Lemma~\ref{lem:bounded_intersec_of_axes}, Remark~\ref{rem:expected_intersec} and the injectivity assumption for $\psi$ on the union \eqref{eq:union_of_3}.

Choose  $r \in \N$ so that the translation length $\|b^r\|=r\|b\|$, of $b^r$, is greater than the diameter $\mathrm{diam}(S)$ of $S$; then the same will be true for the translation length of $\dd^r$, because it is conjugate to $b^r$. It follows that  every hyperbolic element of the subgroup $G_1 = \langle \aa,b^r,c \rangle \leqslant G$ has translation length greater than $\mathrm{diam}(S)$.

Let $\pi_{\dd}\colon \cT \to \ax(\dd)$ be the closest point projection. The image  $\pi_{\dd}(\ax(b))$ is a bounded closed segment of $\ax(\dd)$, so there are exactly two oriented edges $e,e' \in \ax(\dd)$ that start at a vertex of $\pi_{\dd}(\ax(b))$ and end at a vertex of $\ax(\dd)\setminus \pi_{\dd}(\ax(b))$. After removing the interiors of the edges $e$ and $e'$ from $\cT$, we get $3$ connected components. Let $\Omega_1$ be the connected component containing $\pi_{\dd}(\ax(b))$, and let $\Omega_2$ be the union of the two other connected components (see Figure~\ref{fig:ell-hyp}).

Let  $G_2 = \langle c,\dd^r \rangle$ and $H = \langle c \rangle$. We claim that $G_1$, $G_2$, $H$, $\Omega_1$ and $\Omega_2$ satisfy the assumptions of version~2 of the Ping-Pong Lemma (Proposition~\ref{prop:PPv2}). In this case it will follow that we have a splitting $\langle a,b^r,c,\dd^r \rangle \cong G_1 *_H G_2$, implying that $\aa,b^r,c,\dd^r$ generate a copy of $A(P_4)$ in $G$ in the natural way, because $G_1 \cong A(P_3)$ and $G_2 \cong \Z^2$.

\begin{figure}[ht]
\centering
\begin{tikzpicture}
\begin{scope}
\draw [dashed,thick,yellow!60!black,fill=yellow!17] (0,0.75) to[out=60,in=180] (1,2.5) to[out=0,in=90] (2,1.5) to[out=-90,in=60] (0,-0.75) to[out=-120,in=0] (-1,-2.5) to[out=180,in=-90] (-2,-1.5) to[out=90,in=-120] cycle;
\node [yellow!60!black] at (-1.2,-1.2) {$\Omega_1$};
\draw [dashed,thick,green!70!black!50!blue,fill=green!57!blue!12] (-0.5,1.25) to[out=60,in=0] (-1,2.5) to[out=180,in=90] (-2,1.5) to[out=-90,in=180] (-1.25,0.5) to[out=0,in=-120] cycle;
\node [green!70!black!50!blue] at (-1.3,0.9) {$\Omega_2$};
\draw [dashed,thick,green!70!black!50!blue,fill=green!57!blue!12] (0.5,-1.25) to[out=-120,in=180] (1,-2.5) to[out=0,in=-90] (2,-1.5) to[out=90,in=0] (1.25,-0.5) to[out=180,in=60] cycle;
\node [green!70!black!50!blue] at (1.3,-0.9) {$\Omega_2$};
\draw [very thick,yellow!60!black] (-1.25,-2) -- (0,-0.75) -- (0,0.75) -- (1.25,2) node [below left,yshift=3pt,rotate=45] {$\ax(b)$};
\draw [very thick,blue,xshift=1.5pt] (1.25,-2) -- (0,-0.75) -- (0,0.75) -- (-1.5,2.25) node [below right,yshift=3pt,rotate=-45] {$\ax(\dd)$};
\draw [line width=2pt,blue,xshift=1.5pt] (0,-0.75) -- (0.5,-1.25) node [below,pos=0.4,xshift=0pt] {$e$};
\draw [blue,ultra thick,->,xshift=1.5pt] (0,-0.75) -- (0.3,-1.05);
\draw [line width=2pt,blue,xshift=1.5pt] (0,0.75) -- (-0.5,1.25) node [above,pos=0.4,xshift=1pt] {$e'$};
\draw [blue,ultra thick,->,xshift=1.5pt] (0,0.75) -- (-0.3,1.05);
\draw [thick,decorate,decoration={calligraphic brace}] (-0.05,-0.75) -- (-0.05,0.75) node [midway,left] {$S$};
\node at (0,-3) {$\mathrm{diam}(S)>0$};
\end{scope}
\begin{scope}[xshift=4.5cm]
\draw [dashed,thick,yellow!60!black,fill=yellow!17] (-0.75,0) to[out=15,in=180] (0.75,2.5) to[out=0,in=90] (1.75,0) to[out=-90,in=0] (0.75,-2.5) to[out=180,in=-15] (-0.75,0);
\node [yellow!60!black] at (1.4,-0.5) {$\Omega_1$};
\draw [dashed,thick,green!70!black!50!blue,fill=green!57!blue!12] (-0.75,0.7) to[out=15,in=0] (-0.75,2.5) to[out=180,in=90] (-2,1.5) to[out=-90,in=-165] cycle;
\node [green!70!black!50!blue] at (-1.6,1.5) {$\Omega_2$};
\draw [dashed,thick,green!70!black!50!blue,fill=green!57!blue!12] (-0.75,-0.7) to[out=-15,in=0] (-0.75,-2.5) to[out=180,in=-90] (-2,-1.5) to[out=90,in=165] cycle;
\node [green!70!black!50!blue] at (-1.6,-1.5) {$\Omega_2$};
\draw [very thick] (-0.75,0) -- (0.75,0);
\draw [very thick,blue] (-0.75,-2) -- (-0.75,2.25) node [below right,xshift=2pt,rotate=-90] {$\ax(\dd)$};
\draw [line width=2pt,blue] (-0.75,-0.7) -- (-0.75,0) node [left,pos=0.4,xshift=2pt] {$e$};
\draw [blue,ultra thick,->] (-0.75,0) -- (-0.75,-0.4);
\draw [line width=2pt,blue] (-0.75,0.7) -- (-0.75,0) node [left,pos=0.4,xshift=3pt] {$e'$};
\draw [blue,ultra thick,->] (-0.75,0) -- (-0.75,0.4);
\draw [very thick,yellow!60!black] (0.75,-2) -- (0.75,2) node [below left,xshift=-1pt,yshift=4pt,rotate=90] {$\ax(b)$};
\fill (-0.75,0) circle (2pt) node [left] {$w$};
\node at (0,-3) {$S = \varnothing$};
\end{scope}
\begin{scope}[xshift=8.5cm]
\draw [dashed,thick,yellow!60!black,fill=yellow!17] (-0.25,0) to[out=30,in=180] (1,2.5) to[out=0,in=90] (1.75,0) to[out=-90,in=0] (1,-2.5) to[out=180,in=-30] (-0.25,0);
\node [yellow!60!black] at (1.4,-0.5) {$\Omega_1$};
\draw [dashed,thick,green!70!black!50!blue,fill=green!57!blue!12] (-0.6,0.5) to[out=30,in=0] (-0.75,2.5) to[out=180,in=90] (-1.75,1.5) to[out=-90,in=-180] cycle;
\node [green!70!black!50!blue] at (-1.35,1.5) {$\Omega_2$};
\draw [dashed,thick,green!70!black!50!blue,fill=green!57!blue!12] (-0.6,-0.5) to[out=-30,in=0] (-0.75,-2.5) to[out=180,in=-90] (-1.75,-1.5) to[out=90,in=180] cycle;
\node [green!70!black!50!blue] at (-1.35,-1.5) {$\Omega_2$};
\draw [very thick,blue] (-0.75,-2) to[out=90,in=-120] (-0.6,-0.5) (-0.6,0.5) to[out=120,in=-90] (-0.75,2.25) node [above right,xshift=-3pt,rotate=-90] {$\ax(\dd)$};
\draw [very thick,yellow!60!black] (1,-2) to[out=90,in=-10] (-0.25,0) (-0.25,0) to[out=10,in=-90] (1,2) node [below left,yshift=3pt,rotate=80] {$\ax(b)$};
\draw [line width=2pt,blue] (-0.6,-0.5) -- (-0.25,0) node [right,pos=0.2,xshift=-2pt] {$e$};
\draw [blue,ultra thick,->] (-0.25,0) -- (-0.45,-0.29);
\draw [line width=2pt,blue] (-0.6,0.5) -- (-0.25,0) node [right,pos=0.1,yshift=1pt,xshift=-1pt] {$e'$};
\draw [blue,ultra thick,->] (-0.25,0) -- (-0.45,0.29);
\fill (-0.25,0) circle (2pt);
\draw [->] (-1,0) node [left,xshift=3pt] {$w$} -- (-0.35,0);
\node at (0,-3) {$S = \{w\}$};
\end{scope}
\end{tikzpicture}
\caption{The possible configurations in \ref{case:?he?}.}
\label{fig:ell-hyp}
\end{figure}
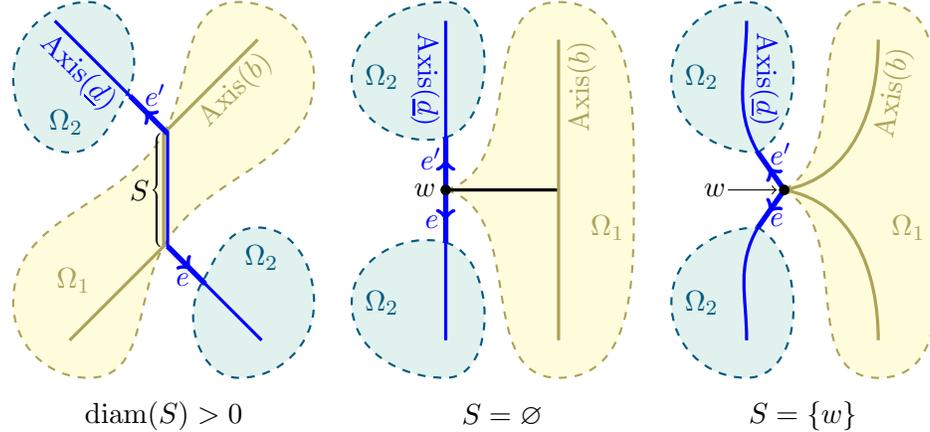

It is clear that any element of $H = \langle c \rangle$ fixes $\ax(b)$ and $\ax(\dd)$ pointwise, and so it preserves $\Omega_1$ and $\Omega_2$ setwise.

Consider any $g_2 \in G_2 \setminus H$. Then 
$g_2 = c^k\dd^{rl}$, for some $k,l \in \Z$ with $l \neq 0$. By Lemma~\ref{lem:commuting-types}, we have $\ax(\dd) \subseteq \fix(c)$, so $g_2$ is hyperbolic with axis $\ax(\dd)$ and translation length $|l| \,\|b^r\|>\mathrm{diam}(S)$. 
It follows that  $g_2\,\Omega_1 \subseteq \Omega_2$, as required. In particular, $g_2$ does not preserve $\ax(b)$ setwise and therefore $g_2 \notin G_1$. Thus $G_1 \cap G_2= H$.

It remains to show that $g_1\,\Omega_2 \subseteq \Omega_1$, for all $g_1 \in G_1 \setminus H = \langle \aa,b^r,c \rangle \setminus \langle c \rangle$. This is a consequence of the following lemma, whose proof is postponed until the end of this section.

\begin{lemma} \label{lem:b.hyp-c.ell}
If $g \in \langle \aa,b^r,c \rangle \setminus \langle c \rangle$ then $g\,\Omega_2 \subseteq \Omega_1$.
\end{lemma}

This lemma  finishes the proof of the claim, so we can apply  Proposition~\ref{prop:PPv2} to deduce that the elements $\aa,b^r,c,\dd^r$ generate a copy  of $A(P_4)$ in $G$ in the natural way, as required.

\medskip
Thus we have considered all possible cases, so the proof of the proposition is complete.
\end{proof}

\begin{proof}[Proof of Lemma~\ref{lem:b.hyp-c.ell}]
Arguing by contradiction, suppose that there is an element $g \in G_1 \setminus H$ such that $g\,\Omega_2 \not\subseteq \Omega_1$.
Since every hyperbolic element of $G_1$ translates $\ax(b)$ by more than $\mathrm{diam}(S)$, we can conclude that $g$ must be elliptic, hence it must pointwise fix $\ax(b) \subseteq \Omega_1$.  The assumption $g\,\Omega_2 \not\subseteq \Omega_1$ now yields that 
\begin{equation}\label{eq:e-e'}
 g\,\{e,e'\} \cap \{e,e'\} \neq \varnothing.   
\end{equation}

First, we claim that $F = \fix(g) \cap \fix(c)$ is equal to $\ax(b)$. Indeed, since $b$ is central in $G_1$ we have $\ax(b) \subseteq \fix(g) \cap \fix(c)$, by Lemma~\ref{lem:commuting-types}.\ref{it:commtypes-hypell}. Hence $F$ is a subtree of $\cT$ containing $\ax(b)$. If $F \neq \ax(b)$ then it must contain a vertex $v$ of degree $\geq 3$ in $F$. Since $G$ acts on $\cT$ with two orbits of oriented edges, there  exists an edge $f \subset F$ incident to $v$ and an element $h \in \st_G(v) \setminus \st_G(f)$ such that $h\,f \in F$. But then $\langle c,g \rangle \subseteq \st_G(f) \cap \st_G(h\,f)$ is cyclic by Proposition~\ref{prop:bagh}; this is impossible, since $g \notin \langle c \rangle$ and $c$ is not a proper power in $G_1=\langle \aa,b^r,c \rangle \cong A(P_3)$. Thus $\fix(g) \cap \fix(c) = \ax(b)$, as claimed. 

It follows that $g\,e \neq e$ and $g\,e' \neq e'$, so, in view of \eqref{eq:e-e'}, either $g\,e=e'$ or $g\,e'=e$. Without loss of generality we can assume that $g\,e=e'$. 
If $\pi_{\dd}(\ax(b))$ contains at least one edge then $\pi_{\dd}(\ax(b))=\ax(b) \cap \ax(\dd)=S$, so it is fixed by $g$ pointwise. In particular, $g$ fixes the start vertices of $e$ and $e'$, the distance between which is positive (as it equals the number of edges in $S$), contradicting the assumption that $g\,e=e'$. Therefore $\pi_{\dd}(\ax(b))$ must consist of a single vertex $w$ in $\cT$, and $e,e'$ are edges of $\ax(\dd)$ starting at $w$ and facing in opposite directions. 

Since $g\,e=e'$, we see that $g\,w=w$, so $\st_G(e) \cap \st_G(e')$ must be cyclic by Proposition~\ref{prop:bagh}. Since both $e$ and $e'$ are edges of $\ax(\dd)$, the pointwise stabiliser $H$, of $\ax(\dd)$, must be cyclic. However, since $\dd=d^{-1}bd$ and $\ax(b)$ is fixed by $\langle \aa,c \rangle$ pointwise, we see that $d^{-1} \langle \aa,c \rangle d \leqslant H$. Recall that $\langle \aa,b,c \rangle \cong A(P_3)$, so the subgroup $\langle \aa,c \rangle$ is isomorphic to the free group of rank $2$. This gives a contradiction with the fact that $H$ is cyclic, finishing the proof of the lemma.
\end{proof}

\section{Free products in strictly ascending HNN-extensions of free groups}\label{sec:asc_HNN}
In the next two sections we will develop some tools for proving Theorem~\ref{thm:one-relator-free-or-snormal} and Theorem~\ref{thm:main} in the case when $\Gamma$ is disconnected. The material from this section will be used  when $G$ is a strictly ascending HNN-extension of a free group.

Let $F$ be a group and let $\varphi\colon F \to F$ be an injective endomorphism. The \emph{ascending HNN-extension of $F$ with respect to $\varphi$} is the group $G$ given by the presentation
\begin{equation}\label{eq:asc_HNN}
    G=\langle F,t \mid tft^{-1}=\varphi(f),\text{ for all }f \in F \rangle.
\end{equation}
If $\varphi(F)$ is a proper subgroup of $ F$, then we will say that $G$ is a \emph{strictly ascending HNN-extension of $F$}.

Suppose that $G$ is given by \eqref{eq:asc_HNN}. Since $t^{n+1}Ft^{-n-1}=\varphi^{n+1}(F) \subseteq \varphi^n(F)=t^n F t^{-n}$, for all $n \in \N\cup \{0\}$, we see that $t$ normalises the subgroup \begin{equation}\label{eq:def_of_N}
N=\bigcap_{n=0}^\infty \varphi^n(F),
\end{equation} whence $\langle N,t\rangle \cong N \rtimes \langle t \rangle$.

On the other hand, the subgroups $(t^{-n} F t^n)_{n \in \N\cup \{0\}}$ form an ascending sequence such that 
\begin{equation}\label{eq:def_of_M}
  M=\bigcup_{n \in \N\cup \{0\}} t^{-n}Ft^n  
\end{equation}
is a normal subgroup of $G$ and $G/M \cong \Z$, where the quotient homomorphism 
\begin{equation}\label{eq:def_of_psi}
\psi\colon G \to \Z ~\text{ is given by } \psi(t)=1 \text{ and }\psi(f)=0, \text{ for all }f \in F.    
\end{equation}

It is easy to see that each element $g \in G$ can be written as the product $t^{-n} f t^m$, for some $m,n \in \N\cup \{0\}$ and $f \in F$. We will say that $g$ is \emph{elliptic} if $m=n$ (equivalently, $g\in \ker\psi= M$); otherwise, we will say that $g$ is \emph{hyperbolic}. This terminology becomes apparent when one considers the natural action of $G$ on its Bass--Serre tree $\cT$. Moreover, in this case it is well-known this action of $G$ on $\cT$ fixes an end (cf. \cite[Proposition~4.13]{MinasyanOsin}).

\begin{notation} \label{not:asc_HNN} 
For the remainder of this section we  assume that $G$ is an ascending 
HNN-extension \eqref{eq:asc_HNN} of a group $F$, and we define $N \leqslant F$, $M \lhd G$ and $\psi\colon G \to \Z$ by \eqref{eq:def_of_N}, \eqref{eq:def_of_M} and \eqref{eq:def_of_psi} respectively.
\end{notation}

\begin{lemma}\label{lem:free_prod_in_asc_HNN} Suppose that $H \leqslant F$ is a non-trivial subgroup such that the subgroup $\langle H, \varphi(F) \rangle$ is naturally isomorphic to the free product $H *\varphi(F)$ in $F$. Then the subgroup $\langle H, N,t \rangle \leqslant G$ is naturally isomorphic to the free product $H*\langle N,t \rangle$ in $G$.    
\end{lemma}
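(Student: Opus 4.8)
The plan is to prove that the natural epimorphism $\rho\colon H*\langle N,t\rangle \to \langle H,N,t\rangle\leqslant G$ is injective. Using that $\langle N,t\rangle\cong N\rtimes\langle t\rangle$ (as noted before the statement), an arbitrary nontrivial element of the free product can be written as a reduced word
\[
g = s_0(n_1t^{m_1})s_1(n_2t^{m_2})s_2\cdots(n_kt^{m_k})s_k,
\]
with $k\geq 1$, $s_i\in H$ (and $s_1,\dots,s_{k-1}\neq 1$), $n_i\in N$, and $(n_i,m_i)\in N\times\Z\setminus\{(1,0)\}$ for $i=1,\dots,k$; the cases where the whole word lies in a single factor are trivial. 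I must show $g\neq 1$ in $G$. Applying the homomorphism $\psi\colon G\to\Z$ from \eqref{eq:def_of_psi} gives $\psi(g)=m_1+\dots+m_k$, so if $m_1+\dots+m_k\neq 0$ we are done at once. Hence from now on assume $m_1+\dots+m_k=0$, so that $g$ lies in $M=\bigcup_{n\geq 0}t^{-n}Ft^n$.

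Put $d_0=0$ and $d_i=m_1+\dots+m_i$ for $1\leq i\leq k$, so $d_k=0$, and set $D=\max\{0,-d_1,\dots,-d_{k-1}\}\geq 0$. Collecting the $t$-letters by conjugation one rewrites $g=(s_0n_1)\prod_{i=1}^{k-1}\bigl(t^{d_i}(s_in_{i+1})t^{-d_i}\bigr)s_k$; now conjugating by $t^{D}$ and using $t f t^{-1}=\varphi(f)$ for $f\in F$ yields, in $G$,
\[
t^{D}gt^{-D} \;=\; \varphi^{e_0}(s_0)\,\varphi^{e_0}(n_1)\,\varphi^{e_1}(s_1)\,\varphi^{e_1}(n_2)\cdots\varphi^{e_{k-1}}(s_{k-1})\,\varphi^{e_{k-1}}(n_k)\,\varphi^{e_k}(s_k),
\]
where $e_0=e_k=D$ and $e_i=D+d_i\geq 0$, so that $e_i-e_{i-1}=m_i$. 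In particular $t^{D}gt^{-D}\in F$, so it suffices to show this element is nontrivial in $F$.

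The key algebraic input is that, since $F=H*\varphi(F)$ and $\varphi$ is an injective endomorphism, applying $\varphi$ repeatedly gives $F=H*\varphi(H)*\varphi^2(H)*\cdots*\varphi^{j}(H)*\varphi^{j+1}(F)$ for every $j\geq 0$; as $N\leqslant\varphi^{j+1}(F)$ by \eqref{eq:def_of_N}, the subgroup of $F$ generated by $H,\varphi(H),\dots,\varphi^{j}(H)$ and $N$ is the internal free product $H*\varphi(H)*\cdots*\varphi^{j}(H)*N$. Taking $j=\max_i e_i$, the element $t^{D}gt^{-D}$ displayed above is a word in this free product whose syllables lie alternately in a factor of the form $\varphi^{e_i}(H)$ and in the factor $N$ (using $\varphi(N)=N$), hence always in \emph{distinct} consecutive factors. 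Its only syllables that can be trivial are $\varphi^{e_0}(s_0)$, $\varphi^{e_k}(s_k)$ (when $s_0$ or $s_k$ is trivial), and those $\varphi^{e_{i-1}}(n_i)$ with $n_i=1$ — but $n_i=1$ forces $m_i\neq 0$, hence $e_{i-1}\neq e_i$. Deleting the trivial syllables one at a time therefore leaves a reduced word: deleting an end syllable cannot create a bad adjacency, and deleting a trivial $N$-syllable $\varphi^{e_{i-1}}(n_i)$ brings together the two $H$-type syllables $\varphi^{e_{i-1}}(s_{i-1})$ and $\varphi^{e_i}(s_i)$, which lie in the \emph{different} factors $\varphi^{e_{i-1}}(H)\neq\varphi^{e_i}(H)$. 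The remaining word is non-empty: if $k\geq 2$ then $s_1\neq 1$ and the syllable $\varphi^{e_1}(s_1)$ survives, while if $k=1$ then $m_1=0$ forces $n_1\neq 1$ and $\varphi^{e_0}(n_1)$ survives. Hence $t^{D}gt^{-D}\neq 1$ in $F$, and so $g\neq 1$ in $G$, as required.

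I expect the main obstacle to be the bookkeeping in the middle steps: verifying that the conjugated element genuinely has the displayed form with the stated exponents $e_i$, and checking in all configurations of the signs and vanishings of the $m_i$ that reducedness of the word in $H*\varphi(H)*\cdots*\varphi^{j}(H)*N$ really does survive the removal of every trivial syllable.
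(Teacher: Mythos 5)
Your argument is correct and follows essentially the same route as the paper's proof: both reduce to a normal-form element of $H*\langle N,t\rangle$, use $\psi$ to dispose of the case of non-zero total $t$-exponent, conjugate by a suitable power of $t$ to land in $F$, and then verify reducedness of the resulting word in the internal free product $H*\varphi(H)*\cdots*\varphi^{j}(H)*N$. The only blemish is notational: the hypothesis gives $\langle H,\varphi(F)\rangle=H*\varphi(F)$ as a subgroup of $F$, not $F=H*\varphi(F)$, but since your argument only uses the free-product structure of the subgroup generated by $H,\varphi(H),\dots,\varphi^{j}(H)$ and $N$, nothing is affected.
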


\begin{proof}
Since  $\langle H, \varphi(F) \rangle \cong  H*\varphi(F)$ in $F$ and $\varphi$ is injective, we have 
$\langle \varphi(H), \varphi^2(F) \rangle \cong  \varphi(H)*\varphi^2(F)$ in $\varphi(F)$. Therefore, 
\[\langle H, \varphi(H),\varphi^2(F) \rangle \cong  H*\langle \varphi(H),\varphi^2(F) \rangle\cong H*\varphi(H)*\varphi^2(F).\]
It  follows, by induction on $n \in \N$, that the subgroups $\{\varphi^i(H)\mid 0 \le i \le n-1\}$ and the subgroup $\varphi^n(F)$ freely generate their free product in $F$. Since $N \subseteq \varphi^n(F)$, for all $n \in \N$, it is easy to see (e.g., by using normal forms in free products) that the subgroups $\{ \varphi^n(H) \mid n \geq 0 \}$  together with $N$ generate the free product $\displaystyle \mathop{\ast}_{i=0}^\infty \varphi^i(H) * N$ in $F$.

Finally, we claim that the homomorphism $\chi\colon H * \langle N,t \rangle \to G$, where the restrictions of $\chi$ to $H$ and $\langle N,t \rangle$ are the inclusions of these subgroups into $G$, is injective. Indeed, let $g \in H * \langle N,t \rangle$ be a non-trivial element, and suppose, for contradiction, that $\chi(g) = 1$. Then we have an expression
\[
g = g_0 t^{\alpha_0} h_1 g_1 t^{\alpha_1} \cdots h_n g_n t^{\alpha_n},
\]
for some $n \geq 1$, $g_0,\ldots,g_n \in N$, $\alpha_0,\ldots,\alpha_n \in \Z$ and some $h_1,\dots,h_n \in H$, such that $h_i \neq 1$ for $i=1, \dots, n$, and for each $0 < i < n$ we have either $g_i \neq 1$ or $\alpha_i \neq 0$. Note that we have $\sum_{i=0}^n \alpha_i = \psi(\chi(g))  = 0$, where $\psi\colon G \to \Z$ is defined in \eqref{eq:def_of_psi}.
Since $t$ normalises $N$, we can write \[g = g_0 t^{\gamma_1} h_1 t^{-\gamma_1} g_1' \cdots t^{\gamma_n} h_n t^{-\gamma_n} g_n',\] where $\gamma_i = \alpha_0 + \cdots + \alpha_{i-1}$, for $i=1, \dots, n$, and $g'_1,\dots,g_n' \in N$ are conjugates of $g_1,\dots,g_n$ (respectively) in $G$. After replacing $g$ with $t^m g t^{-m}$, for $m \in\N$ large enough, we may assume that $\gamma_i \geq 0$,  for each $i$, and therefore
\begin{equation} \label{eq:g-expression}
\chi(g) = g_0'  \varphi^{\gamma_1}(h_1)  g_1'' \cdots \varphi^{\gamma_n}(h_n)  g_n'' ~\text{ in } G,
\end{equation}
for some $g_0',g_1'',\dots,g_n'' \in N$ that are conjugate to $g_0,g_1',\dots,g_n'$ (respectively) in $G$. Now, since $h_i \neq 1$, for all $i=1,\dots,n$, and for each $1 \le  i <n$ we have either $g_i \neq 1$ or $\alpha_i \neq 0$, it follows that whenever $g_i'' = 1$, for some $i \in \{1,\ldots,n-1\}$, we have $\gamma_i \neq \gamma_{i+1}$. Therefore, after possibly removing the elements $g_i''$ that are trivial in $N$, the expression \eqref{eq:g-expression} for $\chi(g)$ is reduced as a word in the free product $\displaystyle \mathop{\ast}_{i=0}^\infty \varphi^i(H) * N$. Since the latter free product embeds into $F$ and $n \geq 1$, this reduced word  represents a non-trivial element of $F \leqslant G$, contradicting our assumption. Therefore $\chi$ is injective, and the lemma is proved.
\end{proof}

\begin{lemma}\label{lem:red_to_stable_letter}  Given arbitrary $h \in F$ and $l \in\N$, let $\widetilde\varphi\colon F \to F$ be the injective endomorphism defined by  $\widetilde\varphi(f)=h \varphi^l(f) h^{-1}$, for all $f \in F$, and let $\widetilde G$ be the corresponding ascending HNN-extension of $F$, i.e., 
\begin{equation*}\label{eq:tilde_G}
\widetilde G=\langle F, \widetilde{t} \mid \widetilde t f \widetilde t^{-1}=\widetilde\varphi(f), \text{ for all } f \in F \rangle.    
\end{equation*}
If $g=ht^l \in G$ and $J=\langle F,g \rangle \leqslant G$ then 
\begin{enumerate}[label=\rm(\roman*)]
    \item \label{it:redstable-J} $J=\psi^{-1}(l\Z)$, where $\psi\colon G \to \Z$ is defined by \eqref{eq:def_of_psi};
    \item \label{it:redstable-iso} there is an isomorphism  $\theta\colon \widetilde G \to J$ such that $\theta$ restricts to the identity map on $F$ and $\theta(\hspace{1pt}\widetilde{t}\hspace{1pt})=g$. 
\end{enumerate}
\end{lemma}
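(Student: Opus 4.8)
The plan is to verify the two claims essentially directly from the presentations, using the structure of $G$ as an ascending HNN-extension.

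\medskip

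\textbf{Part \ref{it:redstable-J}.} First I would observe that $\psi(g) = \psi(ht^l) = l$, so $J = \langle F, g\rangle \subseteq \ker\psi \cdot \langle g\rangle = \psi^{-1}(l\Z)$ since $F = \ker\psi \cap \{$elliptic elements$\}$... more precisely $\psi(F) = \{0\}$ and $\psi(g) = l$, giving $J \subseteq \psi^{-1}(l\Z)$. For the reverse inclusion, take an arbitrary element $u \in \psi^{-1}(l\Z)$; write $u = t^{-a} f t^b$ with $a, b \in \N\cup\{0\}$ and $f \in F$, so that $\psi(u) = b - a \in l\Z$. The key point is that modulo $J$ one can "absorb" powers of $t$ in blocks of size $l$: since $g = ht^l$, we have $t^l = h^{-1}g \in J$ and $t^{-l} = g^{-1}h \in J$. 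Using $t^l, t^{-l} \in J$ together with $F \subseteq J$, and the fact that conjugating $F$ by $t^{\pm l}$ stays inside $F \cdot$(stuff already in $J$)—concretely $t^l F t^{-l} = \varphi^l(F) \subseteq F$—one rewrites $u$ as a product of elements of $F$ and powers of $g$. I would do this carefully by induction on $a + b$: if $a \geq l$, replace $t^{-a}ft^b$ by $t^{-(a-l)}(t^{-l}ft^l)t^b$... but $t^{-l}ft^l$ need not lie in $F$; instead it is cleaner to push $t$'s to one side first. So: write $u = t^{-a}ft^b$; since $b - a \in l\Z$ and both are $\geq 0$, assume WLOG (after multiplying by a suitable power of $g^{\pm 1}$, which lies in $J$) that $a = 0$, i.e. it suffices to show $ft^b \in J$ whenever $l \mid b$. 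Then $ft^b = f(h^{-1}g)^{b/l} \cdot (\text{correction})$; more precisely, since $t^l = h^{-1}g$, we get $t^{bl'} = (h^{-1}g)^{l'}$ after noting $h \in F \subseteq J$, hence $ft^b \in J$. This settles \ref{it:redstable-J}.

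\medskip

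\textbf{Part \ref{it:redstable-iso}.} The homomorphism $\theta\colon \widetilde G \to J$ is defined on generators by $\theta|_F = \mathrm{id}_F$ and $\theta(\widetilde t) = g$. To see it is well-defined, one checks the defining relations of $\widetilde G$: for $f \in F$,
\[
\theta(\widetilde t)\,\theta(f)\,\theta(\widetilde t)^{-1} = g f g^{-1} = h t^l f t^{-l} h^{-1} = h\varphi^l(f)h^{-1} = \widetilde\varphi(f) = \theta(\widetilde\varphi(f)),
\]
so $\theta$ is a well-defined homomorphism into $G$, with image $\langle F, g\rangle = J$. For injectivity, I would use the normal form theorem for (ascending) HNN-extensions: a non-trivial element of $\widetilde G$ has a reduced expression, and since $\widetilde G$ itself is an ascending HNN-extension of $F$ with stable letter $\widetilde t$, every element can be written as $\widetilde t^{-p} f \widetilde t^q$ with $p, q \geq 0$, $f \in F$, and (if this represents $1$) $f$ constrained by the structure of $\widetilde\varphi$. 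Applying $\theta$ gives $g^{-p}fg^q = (ht^l)^{-p}f(ht^l)^q \in G$; expanding, this has $\psi$-value $l(q - p)$, so if it is trivial then $p = q$, and then $\theta(\widetilde t^{-p}f\widetilde t^p) = g^{-p}fg^p$, which equals $\widetilde\varphi^p$-image... one checks $g^{-p}fg^p = 1$ in $G$ forces $f = 1$ using injectivity of $\varphi$ (hence of $f \mapsto h\varphi^l(f)h^{-1}$) and the fact that $F$ embeds in $G$. The cleanest route is: $\theta$ is surjective onto $J = \psi^{-1}(l\Z)$ by \ref{it:redstable-J}; and there is an obvious homomorphism $\widetilde G \to \Z$ sending $\widetilde t \mapsto 1$, $F \mapsto 0$, compatible under $\theta$ with $\frac{1}{l}\psi|_J$, so it suffices to check $\theta$ is injective on $\ker(\widetilde G \to \Z) = M_{\widetilde G} = \bigcup_n \widetilde t^{-n}F\widetilde t^n$; but $\theta(\widetilde t^{-n}F\widetilde t^n) = g^{-n}Fg^n = (ht^l)^{-n}F(ht^l)^n$, and I would verify these subgroups sit inside $M = \bigcup_m t^{-m}Ft^m$ and that $\theta$ matches the two ascending unions bijectively, using injectivity of $\varphi$.

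\medskip

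The main obstacle I anticipate is keeping the bookkeeping straight in the ascending (as opposed to two-sided) HNN setting: in an ascending HNN-extension the normal forms are one-sided ($t^{-n}ft^m$), so the usual Britton's-lemma argument needs the mild adaptation of first clearing denominators (conjugating by a large power of $t$, as in the proof of Lemma~\ref{lem:free_prod_in_asc_HNN}) to land inside $F$ before comparing. Once one commits to always rewriting elements of $\widetilde G$ (and of $J$) in the one-sided form and tracking the $\psi$- (resp. exponent-sum) grading, both parts reduce to the injectivity of $\varphi$ and the fact that $F \hookrightarrow G$, with no further subtlety.
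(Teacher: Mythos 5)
Your proposal is correct and follows essentially the same route as the paper: for part (i) the paper simply notes $t^{-n}Ft^n\subseteq t^{-ln}Ft^{ln}=g^{-n}Fg^n\subseteq J$ (so $M=\ker\psi\subseteq J$ and $J=\psi^{-1}(l\Z)$), which is the same absorption-of-$t$-powers idea you carry out by left-multiplying by a large positive power of $g$ and using $t^l=h^{-1}g\in J$. Part (ii) is left as an exercise in the paper, and your verification (checking the single relation, surjectivity from part (i), and injectivity via the compatible $\Z$-gradings reducing to $g^{-n}Fg^n=1\Rightarrow f=1$) is a correct way to do it.
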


\begin{proof}
For part~\ref{it:redstable-J}, note that for every $m\in \N\cup\{0\}$ we have $h \in t^{-m}Ft^m$, whence
\[t^{-n} F t^n \subseteq t^{-ln} F t^{ln}=g^{-n}Fg^n \subseteq J,~\text{ for all }n \in \N\cup\{0\}.\] This implies that $M=\ker\psi \subseteq J$. Since $\psi(J)=l\Z$, we can conclude that $J=\psi^{-1}(l\Z)$.

The proof of part~\ref{it:redstable-iso} is left as an easy exercise for the reader.    
\end{proof}

\begin{defn}\label{def:principal_elt} Under Notation~\ref{not:asc_HNN}, suppose that $A \leqslant G$ is a subgroup such that $\psi(A)=l\Z$, for some $l \in \N$. Then any element $g \in A$ with $\psi(g)=l$ will be called a \emph{principal element of $A$}. Note that $g=t^{-n} h t^m$, for some $m,n \in \N\cup \{0\}$ and $h \in F$, with $m-n=l$.    
\end{defn}

Observe that when the base group $F$ is free, the natural action of $G$, given by \eqref{eq:asc_HNN}, on its Bass--Serre tree $\cT$ has free vertex and edge stabilisers and fixes an end of $\cT$. Therefore  Lemma~\ref{lem:fixed_end} is applicable in this case.

\begin{lemma}\label{lem:asc_HNN-principal} Suppose that $F$ is a free group and $A \leqslant G$  has non-trivial centre and contains a principal element $g=ht^l$, for some $h \in F$ and $l \in \N$. Set $P=h\varphi^l(F)h^{-1} \leqslant F$. If a subgroup $H \leqslant  F$ is such that  $\langle H,P \rangle \leqslant G$ is naturally isomorphic to the free product $H * P$ in $F$ then the subgroup $\langle H,A \rangle $ is naturally isomorphic to $ H * A$ in $G$.    
\end{lemma}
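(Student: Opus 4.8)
The plan is to reduce to the situation handled in Lemma~\ref{lem:free_prod_in_asc_HNN} by passing to the subgroup $J = \langle F, g\rangle$ and applying Lemma~\ref{lem:red_to_stable_letter}. First I would invoke Lemma~\ref{lem:red_to_stable_letter}: since $g = ht^l$ is a principal element, $J = \psi^{-1}(l\Z)$ and there is an isomorphism $\theta\colon \widetilde G \to J$ fixing $F$ pointwise and sending the stable letter $\widetilde t$ to $g$, where $\widetilde G$ is the ascending HNN-extension of $F$ along $\widetilde\varphi(f) = h\varphi^l(f)h^{-1}$. Note that $\widetilde\varphi(F) = h\varphi^l(F)h^{-1} = P$, so the hypothesis that $\langle H, P\rangle$ is naturally $H * P$ in $F$ is exactly the hypothesis of Lemma~\ref{lem:free_prod_in_asc_HNN} for $\widetilde G$ (with $\widetilde\varphi$ in place of $\varphi$). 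Applying that lemma, the subgroup $\langle H, \widetilde N, \widetilde t\rangle$ of $\widetilde G$ is naturally $H * \langle \widetilde N, \widetilde t\rangle$, where $\widetilde N = \bigcap_{n\ge 0}\widetilde\varphi^n(F)$. Transporting through $\theta$, the subgroup $\langle H, \theta(\widetilde N), g\rangle \leqslant J \leqslant G$ is naturally isomorphic to the free product $H * \langle \theta(\widetilde N), g\rangle$ in $G$.

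The remaining and main point is to show that $A \leqslant \langle \theta(\widetilde N), g\rangle$, so that the free product decomposition above restricts to one for $\langle H, A\rangle$. Certainly $\psi(A) = l\Z$ and $g$ has $\psi(g) = l$, so every element of $A$ can be written as $a_0 g^{k}$ with $a_0 \in A \cap M$ and $k \in \Z$; hence it suffices to show that $A \cap M \leqslant \theta(\widetilde N)$. Here is where I would use that $A$ has non-trivial centre. Pick a non-trivial central element $z \in A$. Writing $z = z_0 g^{j}$ with $z_0 \in M$: since $z$ is central and $g \in A$ normalises $M$, one checks (comparing $\psi$-values and using that $g$ conjugates $M$ into itself injectively) that the elliptic part $z_0$ is itself central in the elliptic subgroup $A \cap M$, or at least that $A \cap M$ has non-trivial centre — I would argue that $A$ satisfies the hypotheses of Lemma~\ref{lem:fixed_end}, since $G$ acts on its Bass–Serre tree with free edge stabilisers and fixes an end, $A$ has a non-trivial central element, and this central element is elliptic (its $\psi$-image must be $0$ because otherwise no non-trivial power of it is elliptic, contradicting centrality together with the structure of $M$ as a locally free normal subgroup). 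By Lemma~\ref{lem:fixed_end}\ref{it:fixedend-normal} the elliptic elements of $A$ — which is exactly $A \cap M$ — form a torsion-free locally cyclic normal subgroup; in particular $A \cap M$ is locally cyclic.

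Now I would pin down $\theta(\widetilde N)$ concretely: $\theta$ fixes $F$ and conjugation by $g$ corresponds, under $\theta$, to $\widetilde\varphi$, so $\theta(\widetilde\varphi^n(F)) = g^{-n}Fg^n = t^{-ln}Ft^{ln}$, and hence $\theta(\widetilde N) = \bigcap_{n\ge 0} t^{-ln}Ft^{ln}$. On the other hand $M = \bigcup_{n\ge 0} t^{-n}Ft^n$, and $A \cap M$, being a locally cyclic subgroup of $G$ that is normalised by the principal element $g$, is contained in $\bigcap_{n\ge 0} g^{-n}Mg^n$; since $g^{-1}Mg \subseteq \bigcup_{n} t^{-ln+ (\text{shift})} F t^{ln}$ and iterating the conjugation pushes any fixed element arbitrarily deep into the tower $t^{-ln}Ft^{ln}$ (using that conjugation by $g$ acts on the ascending union $M$ as the ``shift by $l$'', so a fixed element $a \in A\cap M$ satisfies $g^n a g^{-n} \in F$ eventually, forcing $a = g^{-n}(g^n a g^{-n})g^n \in g^{-n}Fg^n$ for all large $n$, and then centrality/normality lets one conclude $a$ lies in the intersection over \emph{all} $n$). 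This shows $A \cap M \subseteq \theta(\widetilde N)$, hence $A \leqslant \langle \theta(\widetilde N), g\rangle$, and restricting the free product decomposition $\langle H, \theta(\widetilde N), g\rangle \cong H * \langle \theta(\widetilde N), g\rangle$ to $\langle H, A\rangle$ gives $\langle H, A\rangle \cong H * A$ naturally in $G$, as required. The hardest step is the verification that $A \cap M \subseteq \theta(\widetilde N) = \bigcap_n t^{-ln}Ft^{ln}$; the non-trivial centre hypothesis is what makes this work, via Lemma~\ref{lem:fixed_end} and the locally cyclic (hence ``bounded'') nature of the elliptic part of $A$.
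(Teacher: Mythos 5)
Your overall strategy coincides with the paper's: reduce via Lemma~\ref{lem:red_to_stable_letter} to the case where $A$ contains the stable letter, apply Lemma~\ref{lem:free_prod_in_asc_HNN}, and then show that $A$ lies in $\langle N,t\rangle$. However, the step where you actually use the non-trivial centre contains two genuine errors. First, your parenthetical claim that a non-trivial central element of $A$ must be elliptic is false: after the reduction $A$ contains the hyperbolic element $g$, and, for example, if $\varphi$ fixes two elements $x,y$ generating a free subgroup of rank $2$, then $A=\langle x,y,t\rangle\cong F_2\times\Z$ satisfies all the hypotheses of the lemma while its centre is $\langle t\rangle$, consisting of hyperbolic elements only; so Lemma~\ref{lem:fixed_end} does not apply to this $A$, and its conclusion actually fails, since $A\cap M=\langle x,y\rangle$ is not locally cyclic. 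Second, your identification of $\theta(\widetilde N)$ reverses the direction of conjugation: since $\widetilde\varphi^{\,n}(F)=\widetilde t^{\,n}F\widetilde t^{\,-n}$, one has $\theta(\widetilde N)=\bigcap_{n\ge 0}g^nFg^{-n}$, a decreasing intersection of subgroups of $F$, whereas $\bigcap_{n\ge 0}t^{-ln}Ft^{ln}$ is the intersection of an increasing chain and equals $F$. Your concluding iteration accordingly only produces $a\in g^{-n}Fg^n$, a subgroup containing $F$, which is vacuous; it never yields the containment $a\in g^nFg^{-n}$ that is actually needed.

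The missing idea is how the centre is really exploited. Lemma~\ref{lem:fixed_end} should be used to produce a \emph{hyperbolic} central element $z$ (if the centre contained a non-trivial elliptic element, then Lemma~\ref{lem:fixed_end}.\ref{it:fixedend-Z^2} would force $A\cong\Z^2$, which again has hyperbolic central elements); commuting with $t$ forces $z=t^ka$ with $a\in A\cap F$ and $\varphi(a)=a$. Then $B=A\cap F$ satisfies $B=zBz^{-1}=\varphi^k(aBa^{-1})=\varphi^k(B)$, hence $B\subseteq\varphi^{kn}(F)$ for all $n$, i.e.\ $B\subseteq N$, and one checks that $A=\langle B,t\rangle\leqslant\langle N,t\rangle$; the conclusion then follows from Lemma~\ref{lem:free_prod_in_asc_HNN} exactly as you set up. Without this argument (or a correct substitute) your proposal does not establish $A\cap M\subseteq\theta(\widetilde N)$, which you correctly identify as the crux of the proof.
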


\begin{proof} In view of Lemma~\ref{lem:red_to_stable_letter}, $\psi^{-1}(l\Z)$ is isomorphic to an ascending HNN-extension $\widetilde G$ of $F$, and $g$ is the stable letter $\widetilde t$ in it. Since $A \subseteq \psi^{-1}(l\Z)$, we can replace $\varphi$ with $\widetilde \varphi$ and $G$ with $\widetilde G$ to further assume that $A$ contains the stable letter $t$ of the ascending HNN-extension $G$, given by \eqref{eq:asc_HNN} (so that $l=1$, $h=1$ and $P=\varphi(F)$). 

Since $A$ has non-trivial centre $Z$,  Lemma~\ref{lem:fixed_end} implies that there exists a hyperbolic element $z=t^{-p}at^q \in Z$, where  $a \in F$ and $p,q \in \N\cup \{0\}$ are such that $k=q-p \in \N$. Since $t \in A$ and $z$ is central in $A$, we see that $z=t^ka$ and $a \in A \cap F$ commutes with $t$.
Set $B=A \cap F$, then $A=\langle B ,t \rangle$, $a \in B$ and 
\[B=z^n Bz^{-n}=t^{kn} a^n B a^{-n} t^{-kn}=\varphi^{kn}(B),~ \text{  for all } n \in \N\cup\{0\}.\] Therefore $B \subseteq N$, where $N \leqslant F$ is defined by \eqref{eq:def_of_N}.

By the assumptions, the subgroup $\langle H,P \rangle$ splits as the free product $H * P$ in $F$, where $P=\varphi(F)$. Therefore we can apply Lemma~\ref{lem:free_prod_in_asc_HNN} to conclude that the subgroup  $\langle H,N,t \rangle \leqslant G$ is naturally isomorphic to the free product $H * \langle N,t \rangle$. Since $A=\langle B,t \rangle \leqslant \langle N,t \rangle$, we have a natural isomorphism between $\langle H, A \rangle$ and  $H*A$ in $G$, as required.    
\end{proof}

\begin{lemma}\label{lem:asc_HNN-tot_ell} Suppose that $F$ is a free group and  $A \leqslant G$ is a subgroup with non-trivial centre satisfying $\psi(A)=\{0\}$.  If some elements $a \in F \cap A$ and $f \in F$ do not commute in $F$ then the subgroup $\langle f,A \rangle $ is naturally isomorphic to $ \langle f \rangle * A$ in $G$.  
\end{lemma}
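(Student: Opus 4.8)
The plan is to reduce \textbf{Lemma~\ref{lem:asc_HNN-tot_ell}} to the situation handled by \textbf{Lemma~\ref{lem:asc_HNN-principal}}, by producing the required free-product structure \emph{inside} $F$ and then promoting it up the HNN-extension. Since $\psi(A) = \{0\}$, we have $A \leqslant M = \bigcup_{n \geq 0} t^{-n} F t^n$, so $A$ is a subgroup of an ascending union of conjugates of $F$. After conjugating $A$ by a suitable power $t^m$ (which does not affect the conclusion, since $t^m$ normalises $M$ and fixes the end of $\cT$, and we can simultaneously replace $f$ by $t^m f t^{-m} \in F$, noting that $t^m a t^{-m} = \varphi^m(a)$ still fails to commute with $t^m f t^{-m} = \varphi^m(f)$ because $\varphi$ is injective), I may assume $A \leqslant F$ outright, with $a \in A$ and $f \in F$ not commuting in the free group $F$.

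So the core task becomes the following purely free-group statement: if $A \leqslant F$ has non-trivial centre and contains $a$, and $f \in F$ does not commute with $a$, then $\langle f, A \rangle \cong \langle f \rangle * A$ in $F$. First I would use the structure of centralisers in free groups: since $Z(A) \neq \{1\}$ and $F$ is free, $A$ is contained in a maximal cyclic subgroup $\langle w \rangle$ of $F$, hence $A$ itself is infinite cyclic, say $A = \langle a_0 \rangle$ for some $a_0 \in F$ with $a \in \langle a_0 \rangle \setminus \{1\}$. (Here I use the classical fact that every non-trivial centraliser in a free group is cyclic, equivalently that abelian subgroups of free groups are cyclic.) The hypothesis that $f$ and $a$ do not commute then says exactly that $f \notin C_F(a) = C_F(a_0) = \langle a_0 \rangle$ — the last equality because $\langle a_0 \rangle$ is maximal cyclic. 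Thus $\langle a_0 \rangle \cap \langle f \rangle^F$-type analysis is not even needed in that crude form; instead I would appeal to \textbf{Lemma~\ref{lem:large powers}} or a direct ping-pong on the Cayley tree of $F$: since $\langle f \rangle$ and $\langle a_0 \rangle$ are cyclic subgroups of the free group $F$ with $\langle f \rangle \cap \langle a_0 \rangle = \{1\}$ (as $f \notin \langle a_0 \rangle$ and $\langle a_0 \rangle$ is malnormal-on-its-roots, i.e. two cyclic subgroups of a free group intersect trivially unless they share a common root, which would force $f$ into $\langle a_0 \rangle$), the subgroup $\langle f, a_0 \rangle$ splits as $\langle f \rangle * \langle a_0 \rangle$ in $F$. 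Concretely: if some power $f^m = a_0^n$ with $m,n \neq 0$, then $f$ and $a_0$ have a common root $w$, so $f \in \langle w \rangle = \langle a_0 \rangle$, a contradiction; hence the intersection is trivial, and then two cyclic subgroups of a free group that intersect trivially always generate their free product (this is a standard fact — it follows from the Kurosh subgroup theorem, or from ping-pong on the tree with the two fixed-point pairs on $\partial\cT$ being disjoint). This gives $\langle f, A \rangle = \langle f \rangle * \langle a_0 \rangle = \langle f \rangle * A$ in $F$.

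Finally I would feed this into \textbf{Lemma~\ref{lem:asc_HNN-principal}}. That lemma, however, is stated for subgroups $A$ containing a principal element (i.e. with $\psi(A) \neq \{0\}$), which is the opposite of our hypothesis, so I cannot invoke it directly; rather, the present lemma is the ``elliptic'' companion and should be proved by the analogue of its argument. Unwinding: since $A \leqslant F \subseteq G$ and we have shown $\langle f, A \rangle \cong \langle f \rangle * A$ already inside $F$, and since $F$ embeds into $G$, the same free-product decomposition holds in $G$ — there is nothing further to promote. So in fact, once the reduction to $A \leqslant F$ is carried out, the conclusion is immediate from the free-group computation; the HNN-extension only enters through the initial conjugation step that places $A$ inside $F$.

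\textbf{The main obstacle} I anticipate is the reduction step: verifying carefully that conjugating $A$ into $F$ by a power of $t$ is legitimate and that it preserves all hypotheses — in particular that $\psi(A) = \{0\}$ guarantees $A$ is conjugate (by a power of $t$) into $F$ (this uses that $A$ is \emph{finitely generated}? — no, even for infinitely generated $A \leqslant M$ one needs that $A$ lies in a single $t^{-n}Ft^n$, which is \emph{not} automatic since $M$ is a strictly increasing union; here one must use that $A$ has non-trivial centre, forcing $A$ to be cyclic hence contained in one term of the union — so the centrality hypothesis is doing real work both in the reduction and in the free-group step), and that the non-commutation of $a$ and $f$ transports correctly under $\varphi^m$ using injectivity of $\varphi$. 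Handling the case where $a$ and $f$ land in different terms $t^{-n}Ft^n$ of the union before conjugating, and making sure one conjugates by a large enough power, is the fiddly part; everything after that is routine free-group theory.
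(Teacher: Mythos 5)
Your reduction to the case $A \leqslant F$ contains a genuine gap, and you put your finger on it yourself before resolving it incorrectly: you claim that the non-trivial centre of $A$ forces $A$ to be cyclic and hence contained in a single term $t^{-n}Ft^n$ of the union $M$. This is false. Since $\psi(A)=\{0\}$, every element of $A$ is elliptic, and Lemma~\ref{lem:fixed_end}\ref{it:fixedend-normal} only yields that $A$ is torsion-free and \emph{locally} cyclic, i.e.\ isomorphic to a subgroup of $\mathbb{Q}$. Such a group need not be cyclic: for instance, if $F=\langle x,y\rangle$ and $\varphi(x)=x^2$, the subgroup $A=\bigcup_{n\geq 0} t^{-n}\langle x\rangle t^{n}\cong \Z[1/2]$ is abelian (so has non-trivial centre), lies in $\ker\psi$, and is contained in no single conjugate $t^{-n}Ft^{n}$. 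The argument "free group with non-trivial centre is cyclic" that you invoke is circular here: it presupposes that $A$ is free, which is exactly what containment in one term of the union would give you. Since Lemma~\ref{lem:asc_HNN-tot_ell} is applied in Proposition~\ref{prop:HNN-free-products} to arbitrary centrally non-trivial subgroups with trivial $\psi$-image, the non-cyclic case cannot be excluded, so your proof as written does not establish the lemma.

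The repair is local and is exactly what the paper does: instead of conjugating all of $A$ into $F$, one assumes for contradiction that $\langle f,A\rangle\not\cong\langle f\rangle * A$ and fixes a non-trivial relation $a_1f^{i_1}\cdots a_kf^{i_k}=1$. Only the finitely many elements $a,a_1,\dots,a_k$ of $A$ are involved; by local cyclicity they lie in a common cyclic subgroup $\langle b\rangle$, and the single element $b$ does lie in some $t^{-n}Ft^{n}$, a free group containing $F$ and hence containing $f$. The relation shows $\{b,f\}$ is not a free basis of $\langle b,f\rangle$, so this two-generated subgroup of a free group is cyclic, forcing $b$ (hence $a\in\langle b\rangle$) to commute with $f$, a contradiction. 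Your free-group computation in the middle paragraph (trivial intersection of maximal cyclic subgroups, ping-pong or \cite[Proposition~I.2.7]{LyndonSchupp}) is essentially this same step and is fine; it is only the global reduction that fails and must be replaced by the finitary one.
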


\begin{proof} By the assumptions,  $A \subseteq M=\ker\psi$, thus $A$ consists of elliptic elements only, so $A$ must be locally cyclic by Lemma~\ref{lem:fixed_end}.  Suppose that $\langle f,A \rangle \not\cong \langle f \rangle*A$ in $G$. Then there exist $k \ge 1$, $a_1,a_2,\dots,a_k \in A\setminus\{1\}$ and $i_1,\dots,i_k \in \Z\setminus\{0\}$ such that
\begin{equation}\label{eq:not_a_free_prod}
 a_1f^{i_1}a_2f^{i_2} \dots a_kf^{i_k}=1 \text{ in } G.   
\end{equation}
Now, since $A$ is locally cyclic, there exists an element $b \in A$ such that $a,a_1,\dots,a_k \in \langle b \rangle$. Moreover, in view of \eqref{eq:def_of_M},  $b \in t^{-n} F t^n$, for some $n \in \N$. 

Recall that $t^{-n} F t^n$ is a free group containing $F$. Equation~\eqref{eq:not_a_free_prod} means that $\{b,f\}$ is not a free generating set of the subgroup $\langle b,f \rangle \leqslant t^{-n} F t^n$, hence this subgroup must be cyclic (for example, by \cite[Proposition~I.2.7]{LyndonSchupp}). In particular, $b$ must commute with $f$, whence $a$ must commute with $f$, contradicting the choice of $f$. This contradiction completes the proof.        
\end{proof}

\begin{lemma} \label{lem:existence_of_f} Suppose that $F$ is a non-abelian finitely generated free group and $P_1,\dots, P_s \leqslant F$ are finitely generated subgroups of infinite index in $F$. Then there exists an element $f \in F \setminus\{1\}$ such that the subgroup $\langle f,P_i \rangle$ is naturally isomorphic to the free product $\langle f \rangle*P_i$ in $F$, for each $i=1,\dots,s$.    
\end{lemma}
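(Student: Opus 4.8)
The plan is to find a single element $f \in F \setminus \{1\}$ that works simultaneously for all the $P_i$ by exploiting the fact that each $P_i$, being finitely generated of infinite index, is a ``small'' subgroup of $F$ in a strong combinatorial sense. First I would recall (or prove) the key structural fact about finitely generated subgroups of infinite index in a free group: if $P \leqslant F$ is finitely generated of infinite index, then using Stallings folds there is a finite core graph $X_P$ representing $P$, and since $[F:P] = \infty$ this graph is not a covering of the wedge-of-circles rose, so there is a vertex where some generator direction is missing. From this one extracts an explicit infinite family of ``$P$-reduced'' words: more usefully, one shows that the set of elements of $F$ that generate a free product $\langle f \rangle * P$ with $P$ is ``generic''. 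Concretely, $\langle f, P \rangle \cong \langle f \rangle * P$ if and only if $f$ has infinite order (automatic, as $F$ is torsion-free) and no nontrivial power of $f$ lies in a conjugate-relevant position — equivalently, the natural map $\langle f\rangle * P \to F$ is injective, which by the ping-pong/Stallings picture holds as soon as the reduced graph obtained by attaching a loop for $f$ to $X_P$ is already folded.

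The cleanest route is via Proposition~\ref{prop:PPv1} (ping-pong) applied to the action of $F$ on the boundary of its Cayley tree $\mathcal{T}$, exactly as in Lemma~\ref{lem:large powers}. Fix a free basis of $F$ and let $\mathcal{T}$ be the corresponding Cayley tree. Since $P_i$ has infinite index, the subtree $\mathcal{T}_{P_i} = \widetilde{X_{P_i}} \subseteq \mathcal{T}$ (the union of axes of elements of $P_i$, i.e. the minimal $P_i$-invariant subtree, together with $P_i$-translates of the base core graph's lift) is a proper subtree whose complement in $\mathcal{T}$ contains, at every vertex of $\mathcal{T}_{P_i}$ avoided by the core graph, an entire half-tree. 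The idea is then: pick a vertex $v \in \mathcal{T}$ and a loxodromic element $f$ whose axis $\mathrm{Axis}(f)$ is positioned so that it meets each $\mathcal{T}_{P_i}$ in a bounded set, and so that high powers of $f$ push a chosen neighbourhood $\Omega_0$ of $v$ off into disjoint ends. Replacing $f$ by a suitable power (using that there are only finitely many $P_i$) one arranges $f^n \Omega_i \subseteq \Omega_0$ for the relevant sets, where $\Omega_i$ is a $P_i$-invariant set disjoint from $\Omega_0$, and $P_i \Omega_0 \subseteq \Omega_i$; then Proposition~\ref{prop:PPv1} (with $|P_i| \geq 3$ or $|\langle f\rangle| \geq 3$) gives $\langle f, P_i\rangle = \langle f \rangle * P_i$.

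To make the single $f$ work for all $i$ at once, the simplest implementation is a counting/genericity argument: I would show that for each $i$ the set $B_i = \{ f \in F : \langle f, P_i\rangle \not\cong \langle f\rangle * P_i \}$ is contained in the set of $f$ fixing an end of $\mathcal{T}_{P_i}$ or, more crudely, is not all of $F$ and in fact is contained in a finite union of ``bad'' cosets determined by the finitely many missing directions of the core graph $X_{P_i}$; then pick any $f$ avoiding $\bigcup_{i=1}^s B_i$, which is possible since $F$ is not a finite union of proper subgroups (indeed not a finite union of cosets of infinite-index subgroups, as $F$ is non-abelian free, hence of exponential growth). Alternatively, and perhaps more transparently, one proceeds by induction on $s$: given $f'$ working for $P_1, \dots, P_{s-1}$, note $\langle f', P_i\rangle = \langle f'\rangle * P_i$ is itself free, and then correct $f'$ to handle $P_s$ using that $\langle f' \rangle$ has infinite index in $F$, feeding the problem back into the $s$-small case with the single subgroup $P_s$ enlarged appropriately.

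\textbf{Main obstacle.} The delicate point is ensuring one element $f$ works for \emph{all} $s$ subgroups simultaneously while each $P_i$ may be positioned quite differently in $\mathcal{T}$; a naive ping-pong set-up is tailored to one subgroup. I expect the argument that $F$ is not covered by the relevant finitely many bad cosets (equivalently, that the generic $f$ is ``hyperbolic relative to every $P_i$'' in the free-group sense) to be the crux — this is where infinite index of each $P_i$ is genuinely used, via the Stallings core graph having a missing edge, and one must quantify this uniformly. Once that combinatorial input is in place, the ping-pong verification is routine along the lines of Lemma~\ref{lem:large powers} and the proof of Proposition~\ref{prop:P3}.
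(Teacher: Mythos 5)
Your overall strategy (exploit infinite index via core graphs/quasiconvexity, then ping-pong on the tree) is in the right spirit, but the proof as written has a genuine gap at exactly the point you flag as the crux: producing \emph{one} element $f$ that works for all $s$ subgroups simultaneously. Your first mechanism — that the bad set $B_i=\{f\in F: \langle f,P_i\rangle\not\cong\langle f\rangle * P_i\}$ is contained in a finite union of cosets of infinite-index subgroups — is asserted but not proved, and it is false in general for non-cyclic $P_i$. For instance, with $F=F(a,b)$ and $P=\langle a, bab^{-1}\rangle$ (finitely generated, infinite index), every element of the double coset $\langle a\rangle b\langle a\rangle$ lies in $B$, since adjoining any such element yields all of $F$, which has rank $2<3$; bad sets here are built from double cosets and root-closures, not finitely many cosets, so the appeal to ``$F$ is not a finite union of cosets'' does not apply. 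Your fallback induction on $s$ also does not go through as described: once you ``correct'' $f'$ to handle $P_s$, nothing in the argument preserves the free-product property for $P_1,\dots,P_{s-1}$.

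The paper closes this gap by splitting the simultaneity into two steps, each handled by a citation. First, since $F$ is hyperbolic and each finitely generated $P_i$ is quasiconvex of infinite index, a lemma from the literature produces a single infinite-order $g\in F$ with $\langle g\rangle\cap P_i=\{1\}$ for every $i$ at once. Second, a separate theorem guarantees that for all sufficiently large $n$ the element $f=g^n$ generates $\langle f\rangle * P_i$ with each $P_i$; since each $i$ only imposes a lower bound on $n$ and there are finitely many $i$, one power works for all. If you want a self-contained argument along your lines, you would need to prove analogues of both steps — in particular a quantitative statement saying that once $\langle g\rangle\cap P_i=\{1\}$, a sufficiently high power of $g$ ping-pongs with $P_i$ (this is where quasiconvexity, i.e.\ the finiteness of the core graph, is used uniformly) — rather than a genericity claim about the set of good $f$ itself.
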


\begin{proof} Recall that free groups are hyperbolic (in the sense of Gromov) and finitely generated subgroups are quasiconvex (see \cite[Proposition~2]{Short}). Therefore, by \cite[Lemma~4.6]{Min-some_props} there exists an infinite order element $g \in F$ such that $\langle g \rangle \cap P_i=\{1\}$, for each $i=1,\dots,s$. Now we can apply \cite[Theorem~5]{Min-res_props} (with $s=1$, $g_1=g$ and $x_1=1$) to find a sufficiently large $n \in \N$ such that for the element $f=g^n$ and for every $i=1,\dots,s$ the subgroup $\langle f,P_i\rangle$ naturally splits as the free product $\langle f \rangle *P_i$ in $F$.    
\end{proof}

\begin{rem}
The assumption that $F$ is finitely generated in Lemma~\ref{lem:existence_of_f} is not necessary.  Indeed, if $F$ is not finitely generated and $\mathcal{B}$ is an (infinite) free basis for $F$, then only finitely many elements of $\mathcal{B}$ appear in the words defining the generators of the $P_i$. It follows that there exists a free splitting $F = H * K$, with $H$ finitely generated, such that $P_i \leqslant H$ for each $i$, and we could simply take $f$ to be any non-trivial element of $K$.
\end{rem}

\begin{prop} \label{prop:HNN-free-products}
Let $G$ be a strictly ascending HNN-extension of a finitely generated non-abelian free group $F$ with respect to a non-surjective injective endomorphism $\varphi\colon F \to F$. Suppose that  $A_1,\dots,A_s \leqslant G$  are subgroups with non-trivial centres. Then there exists an elliptic element of infinite order $f \in G$  such that for each $i=1,\dots,s$ the subgroup $\langle f,A_i \rangle \leqslant G$ is naturally isomorphic to the free product $\langle f \rangle * A_i$.
\end{prop}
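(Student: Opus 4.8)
The plan is to split the subgroups $A_i$ into two families according to whether $\psi(A_i)=\{0\}$ or $\psi(A_i)=l_i\Z$ with $l_i\ge 1$, to conjugate the whole family by a single power of $t$ so that the lemmas of this section become applicable, and then to find one common element $f$ using Lemma~\ref{lem:existence_of_f}. A preliminary observation that will be used repeatedly is that $\varphi^l(F)$ has infinite index in $F$ for every $l\in\N$: since $\varphi$ is injective, $\varphi(F)\cong F$ is free of rank $r=\operatorname{rank}(F)\ge 2$, so if $[F:\varphi(F)]$ were finite, say equal to $k$, then the Nielsen--Schreier formula would give $r=k(r-1)+1$, forcing $k=1$ and contradicting the assumption that $\varphi$ is not surjective. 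Hence $\varphi^l(F)\subseteq\varphi(F)$, and every $F$-conjugate of $\varphi^l(F)$, is a finitely generated subgroup of infinite index in $F$.

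For the normalisation step, for each $i$ with $\psi(A_i)=l_i\Z$ I would pick a principal element $g_i\in A_i$ and, following Definition~\ref{def:principal_elt}, write $g_i=t^{-n_i}h_it^{m_i}$ with $h_i\in F$ and $m_i-n_i=l_i$; for each $i$ with $\psi(A_i)=\{0\}$ I would pick $a_i\in A_i\setminus\{1\}$ and, using \eqref{eq:def_of_M}, an integer $m_i\ge 0$ with $a_i=t^{-m_i}b_it^{m_i}$ for some $b_i\in F$, setting $n_i=0$. Let $n^*=\max_i\max\{n_i,m_i\}$ and replace every $A_i$ by its conjugate $A_i'=t^{n^*}A_it^{-n^*}$; this loses nothing, since if $f\in F\setminus\{1\}$ satisfies $\langle f,A_i'\rangle\cong\langle f\rangle*A_i'$ for all $i$, then $\widehat f=t^{-n^*}ft^{n^*}$ lies in $\ker\psi$, is elliptic of infinite order, and satisfies $\langle\widehat f,A_i\rangle\cong\langle\widehat f\rangle*A_i$ for all $i$. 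A short computation using $t^jft^{-j}=\varphi^j(f)$ shows that $t^{n^*}g_it^{-n^*}=\varphi^{\,n^*-n_i}(h_i)\,t^{l_i}$, a principal element of $A_i'$ of the shape required by Lemma~\ref{lem:asc_HNN-principal}, while $a_i':=t^{n^*}a_it^{-n^*}=\varphi^{\,n^*-m_i}(b_i)$ is a non-trivial element of $F\cap A_i'$. Note that each $A_i'$ still has non-trivial centre, and $\psi(A_i')=\psi(A_i)$.

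Now I would apply Lemma~\ref{lem:existence_of_f} to the finite list consisting of the subgroups $P_i:=\varphi^{\,n^*-n_i}(h_i)\,\varphi^{l_i}(F)\,\varphi^{\,n^*-n_i}(h_i)^{-1}$, for $i$ with $\psi(A_i)\ne\{0\}$, and the infinite cyclic subgroups $\langle a_i'\rangle$, for $i$ with $\psi(A_i)=\{0\}$. Each of these is finitely generated of infinite index in $F$ --- the $P_i$ by the first paragraph, and $\langle a_i'\rangle$ because a non-abelian free group is not virtually cyclic. Lemma~\ref{lem:existence_of_f} then yields $f\in F\setminus\{1\}$ with $\langle f,P_i\rangle\cong\langle f\rangle*P_i$ for all such $i$, and with $\langle f,a_i'\rangle\cong\langle f\rangle*\langle a_i'\rangle$ (so in particular $f$ and $a_i'$ do not commute) for all such $i$. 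For $i$ with $\psi(A_i)\ne\{0\}$, Lemma~\ref{lem:asc_HNN-principal} with $H=\langle f\rangle$ gives $\langle f,A_i'\rangle\cong\langle f\rangle*A_i'$; for $i$ with $\psi(A_i)=\{0\}$, Lemma~\ref{lem:asc_HNN-tot_ell} with $a=a_i'$ gives the same. Conjugating back by $t^{-n^*}$ produces the element $\widehat f$ required by the proposition.

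The main obstacle is the normalisation step. A general principal element of $A_i$ has the shape $t^{-n}ht^m$ rather than the shape $ht^l$ demanded by Lemma~\ref{lem:asc_HNN-principal}, and an $A_i$ with $\psi(A_i)=\{0\}$ need not meet $F$ at all, so one has to conjugate the whole family by a \emph{single} power of $t$ large enough to repair both defects simultaneously, and then check that the element produced inside $F$ conjugates back to an \emph{elliptic} element of $G$ (which it does, since conjugates of elements of $F\subseteq\ker\psi$ remain in $\ker\psi$). The only other point needing care is the infinite-index claim for $\varphi^l(F)$, which, via the rank formula for finitely generated free groups, is exactly where the hypothesis that $G$ is \emph{strictly} ascending is used.
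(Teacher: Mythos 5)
Your proposal is correct and follows essentially the same route as the paper: split the $A_i$ by whether $\psi(A_i)$ is trivial, conjugate the whole family by a single power of $t$ to put the principal elements in the form $ht^{l}$ and the elliptic witnesses inside $F$, apply Lemma~\ref{lem:existence_of_f} to the subgroups $P_i$ and the cyclic subgroups, and finish with Lemmas~\ref{lem:asc_HNN-principal} and~\ref{lem:asc_HNN-tot_ell}. The only cosmetic differences are that you spell out the Nielsen--Schreier computation showing $[F:\varphi^{l}(F)]=\infty$ (the paper phrases this as a proper subgroup of rank at most $r$ having infinite index by the Schreier formula) and that you make the conjugation back by $t^{-n^*}$ explicit.
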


\begin{proof}
Let us assume that $G$ has presentation \eqref{eq:asc_HNN}, and let $\psi\colon G \to \Z$ be defined by \eqref{eq:def_of_psi}. After re-labelling, we can suppose that, for some $q \in \{0,\dots,s\}$, the subgroups $A_1,\dots,A_q$ have non-trivial images under $\psi$ and the subgroups $A_{q+1},\dots,A_s$ have trivial $\psi$-images. For $i=1,\dots,q$, choose a principal element $g_i=t^{-n_i}h_it^{m_i} \in A_i$,  where $m_i-n_i=l_i \in \N$ and $h_i \in F$. For $j=q+1,\dots,s$, choose any non-trivial element $a_j \in A_j$; then $a_j=t^{-n_j} h_j t^{n_j}$, for some $n_j \in \N\cup\{0\}$ and $h_j \in F \setminus\{1\}$.

Recall that $t^k h t^{-k}=\varphi^k(h) \in F$, for all $k \in \N\cup\{0\}$ and all $h \in F$. Therefore, after setting $n=\max\{n_i \mid i=1,\dots,s\}$ and replacing $A_1,\dots,A_s$ with their conjugates by $t^n$, we can assume that $g_i=h_it^{l_i}$, for $i=1,\dots,q$, and $a_j \in (A_j \cap F)\setminus\{1\}$, for $j=q+1,\dots,s$.

Denote $P_i=h_i \varphi^{l_i}(F) h_i^{-1} \leqslant F$, for $i=1,\dots,q$, and $P_j=\langle a_j \rangle \leqslant F$, for $j=q+1,\dots,s$. Recall that $\varphi(F)$ is a proper subgroup of $F$ and $F$ is a free group of finite rank $r \ge 2$. So each $P_i$ is a proper subgroup of $F$ of rank at most $r$, therefore $|F:P_i|=\infty$ by Schreier index formula \cite[Propostion~I.3.9]{LyndonSchupp}. Hence we can use Lemma~\ref{lem:existence_of_f} to find an infinite order element $f \in F$ such that $\langle f,P_i \rangle \cong \langle f \rangle* P_i$, for each $i=1,\dots,s$. In particular, $f$ does not commute with $a_j$, when $q+1\le j\le s$. Therefore, according to Lemmas~\ref{lem:asc_HNN-principal} and \ref{lem:asc_HNN-tot_ell}, for each $i=1,\dots,s$ the subgroup $\langle f,A_i \rangle$ naturally splits as the free product $\langle f \rangle*A_i$ in $G$.
\end{proof}

\section{Free products in one-relator groups}\label{sec:one-rel}
In this section we give a coarse classification of one-relator groups (Proposition~\ref{prop:1-rel_ah-refinement}), which allows us  to prove Theorem~\ref{thm:one-relator-free-or-snormal} by combining the results about strictly ascending HNN-extensions of free groups obtained in Section~\ref{sec:asc_HNN} together with known properties of acylindrically hyperbolic groups.

Recall that a group $G$ is said to be \emph{acylindrically hyperbolic} if it admits a non-elementary acylindrical action on a Gromov hyperbolic geodesic metric space. Acylindrically hyperbolic groups were defined by Osin in \cite{Osin} and we refer the reader to this manuscript for the detailed definitions and examples. For the purposes of the current paper we can use this concept as a black box.

The next proposition is essentially a refinement of a statement about acylindrical hyperbolicity of one-relator groups given by the first author and Osin in \cite{MinasyanOsin}, using recent progress achieved by Genevois and Horbez \cite{GenevoisHorbez} for free-by-cyclic groups. Recall that a group $G$ is said to \emph{commensurate a subgroup} $H\leqslant G$ if $H \cap gHg^{-1}$ has finite index in $H$, for all $g \in G$. When $H=\langle a \rangle$ is cyclic this means that for each $g \in G$ there exist $k,l \in \Z\setminus \{0\}$ such that $g a^k g^{-1}=a^l$ in $G$.

\begin{prop}\label{prop:1-rel_ah-refinement}
Let $G$ be a one-relator group with presentation \eqref{eq:1-rel_pres}. If $k \ge 3$ then $G$ is acylindrically hyperbolic. 

If $k=2$ then at least one of the following statements is true:
\begin{enumerate}[label=\rm(\alph*)]
\item\label{it:comm-cyc} there is an infinite order element $a \in G$ such that $G$ commensurates the cyclic subgroup $\langle a \rangle$;
    \item\label{it:st_asc_HNN} $G$ is isomorphic to a strictly ascending HNN-extension of a finitely generated non-abelian free group $F$ with respect to some injective non-surjective endomorphism $\varphi\colon F \to F$;
    \item\label{it:AH} $G$ is acylindrically hyperbolic.
\end{enumerate}
Moreover, in case \ref{it:comm-cyc} $G$ is a generalised Baumslag--Solitar group. 
\end{prop}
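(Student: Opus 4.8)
The plan is as follows. When $k\ge 3$ the group $G$ is acylindrically hyperbolic by \cite[Corollary~2.6]{MinasyanOsin}, so I would concentrate on the case $k=2$; write $G=\langle a,t\mid W=1\rangle$ with $W$ cyclically reduced. After replacing $\{a,t\}$ by a suitable free basis --- an automorphism of the ambient free group, which alters neither the isomorphism type of $G$ nor the statement to be proved --- one may arrange that the exponent sum of $t$ in $W$ vanishes. If $W$ then involves only the letter $a$, the group $G$ is a free product $\langle a\mid W\rangle*\langle t\rangle$, hence infinite cyclic, free of rank two, or isomorphic to $\Z/m*\Z$ with $m\ge 2$; the first is a generalised Baumslag--Solitar group and the other two are non-elementary free products, hence acylindrically hyperbolic. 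So one may assume that $t$ occurs in $W$, in which case the Magnus--Moldavanskii breakdown realises $G$ as an HNN-extension $G=\langle B,t\mid tUt^{-1}=V\rangle$, where $B=\langle a_\mu,\dots,a_\nu\mid\overline W\rangle$ (with $a_i=t^iat^{-i}$) is a one-relator group and $U=\langle a_\mu,\dots,a_{\nu-1}\rangle$, $V=\langle a_{\mu+1},\dots,a_\nu\rangle$ are Magnus subgroups of $B$, free by the Freiheitssatz; this is the topmost layer of a hierarchy \eqref{eq:HNN} for $G$.

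The core of the argument is the analysis of the action of $G$ on the Bass--Serre tree $\cT$ of this splitting, which has free edge stabilisers, is minimal, and has no global fixed point (otherwise $G$ would be conjugate into $B\subsetneq G$). I would split into cases according to the action on $\partial\cT$. If $G$ fixes an end of $\cT$, then the HNN-extension is ascending, so $B$ equals one of its own Magnus subgroups and is therefore free, of some finite rank $r$: if $r\le 1$ then $G$ is infinite cyclic or a Baumslag--Solitar group --- a generalised Baumslag--Solitar group, giving~\ref{it:comm-cyc}; if $r\ge 2$ and the extension is strictly ascending then~\ref{it:st_asc_HNN} holds; and if $r\ge 2$ but the extension is not strictly ascending, then $G\cong F_r\rtimes\Z$ is free-by-cyclic, and the characterisation of acylindrical hyperbolicity for such groups due to Genevois and Horbez \cite{GenevoisHorbez} yields that either~\ref{it:AH} holds or $G$ commensurates an infinite cyclic subgroup, giving~\ref{it:comm-cyc}. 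If $G$ fixes no end and is not virtually cyclic, then its action on $\cT$ is non-elementary; here Bagherzadeh's Proposition~\ref{prop:bagh} shows that the edge group $U$ meets its conjugates by vertex-group elements in cyclic subgroups, and I would use this to conclude that, unless $U$ is commensurated in $G$, the edge group $U$ is weakly malnormal in $G$ in the sense required to apply the tree criterion of \cite{MinasyanOsin}, so that~\ref{it:AH} holds; while if $U$ is commensurated in $G$, then $G$ commensurates the infinite cyclic group $U$ and~\ref{it:comm-cyc} holds. Finally, if $G$ is virtually cyclic then, being a two-generator one-relator group, $G\cong\Z$, a generalised Baumslag--Solitar group.

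It remains to establish the concluding assertion, that in case~\ref{it:comm-cyc} the group $G$ is a generalised Baumslag--Solitar group. When~\ref{it:comm-cyc} was reached with $G$ infinite cyclic or a Baumslag--Solitar group this is clear. If~\ref{it:comm-cyc} came from the Genevois--Horbez step, then $G$ is virtually $F_r\times\Z$; one produces an infinite-order element centralising a finite-index subgroup of $G$ --- using that a power of the monodromy is inner --- and checks that this element is in fact central in $G$, so that by Murasugi's classification of one-relator groups with non-trivial centre $G$ is a generalised torus-knot group, hence a generalised Baumslag--Solitar group. If~\ref{it:comm-cyc} came from $U=\langle a_\mu\rangle$ being commensurated in $G$ --- which forces $B$ to have exactly two generators --- one checks that $\langle a_\mu\rangle$ and $\langle a_{\mu+1}\rangle$ sharing a non-trivial power forces $B$ to be the torus-knot group $\langle a_\mu,a_{\mu+1}\mid a_\mu^p=a_{\mu+1}^q\rangle$; then $U$ and $V$ are the two infinite cyclic vertex groups of its standard amalgam decomposition, and adjoining the $t$-edge exhibits $G$ directly as a generalised Baumslag--Solitar group.

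The step I expect to be the main obstacle is the ``fixes no end'' case of the second paragraph: carrying out the dichotomy between acylindrical hyperbolicity and~\ref{it:comm-cyc} requires matching the conclusion of Proposition~\ref{prop:bagh} precisely to the hypotheses of the tree criterion in \cite{MinasyanOsin}, and the situation is genuinely delicate when the edge groups $U$ and $V$ are merely infinite cyclic --- that is, when $B$ has only two generators --- since then Proposition~\ref{prop:bagh} gives no information and one must decide by hand whether $\langle a_\mu\rangle$ is commensurated in $G$, and, when it is, determine exactly which one-relator bases $B$ permit this.
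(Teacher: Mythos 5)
Your overall architecture differs from the paper's in a way that creates real gaps. The paper does not re-derive the trichotomy from the Bass--Serre tree: it quotes \cite[Proposition~4.21]{MinasyanOsin} as a black box, which already says that a two-generator one-relator group is acylindrically hyperbolic, or an HNN-extension of a one-relator group $H=\langle a,b\mid r\rangle$ with associated cyclic subgroups $\langle a\rangle,\langle b\rangle$ intersecting non-trivially (whence $G$ commensurates $\langle a\rangle$), or an ascending HNN-extension of a finitely generated free group; the only genuinely new work is the Genevois--Horbez step in the last case, which you reproduce correctly. By contrast, your second paragraph attempts to re-prove that trichotomy, and the ``fixes no end'' case is exactly where it breaks down --- as you yourself flag. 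Two concrete problems there: first, when the edge group $U$ is non-cyclic and commensurated, you conclude that ``$G$ commensurates the infinite cyclic group $U$'', but $U=\langle a_\mu,\dots,a_{\nu-1}\rangle$ is free of rank $\nu-\mu\ge 2$ in that situation, so this does not yield case \ref{it:comm-cyc}; second, when $U$ is cyclic you concede that Proposition~\ref{prop:bagh} gives nothing and the dichotomy must be ``decided by hand'', which is precisely the content of the result you would need to prove. As written, the trichotomy for $k=2$ is not established.

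The ``moreover'' clause is also not proved. The paper's argument is uniform and short: if $G$ commensurates an infinite cyclic subgroup, the relator cannot be a proper power (Newman's Spelling Theorem would make $G$ hyperbolic and non-elementary, and hyperbolic groups only commensurate cyclic subgroups inside virtually cyclic subgroups), so $\mathrm{cd}(G)\le 2$ by Lyndon, and Kropholler's Theorem~C then says that a finitely generated group of cohomological dimension $2$ commensurating an infinite cyclic subgroup is a generalised Baumslag--Solitar group. Your case-by-case substitute hinges on the assertion that $\langle a_\mu\rangle\cap\langle a_{\mu+1}\rangle\neq\{1\}$ forces the two-generator one-relator group $B$ to be a torus-knot group $\langle x,y\mid x^p=y^q\rangle$; this is unjustified (non-trivial intersection of the two cyclic generator subgroups does not, without further argument, pin down the presentation of $B$, and it certainly does not follow from $B$ having non-trivial centre, which you have not shown either). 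Even in the free-by-cyclic subcase, your appeal to Murasugi's classification requires producing a \emph{central} element of $G$ itself, whereas the paper only needs normality of $\langle a\rangle$ for case \ref{it:comm-cyc} and then lets the Kropholler argument do the rest. I would recommend replacing your third paragraph wholesale with the Newman--Lyndon--Kropholler chain.
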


\begin{proof} If $k \ge 3$ then $G$ is acylindrically hyperbolic by \cite[Corollary~2.6]{MinasyanOsin}. Therefore we may further assume that $k=2$. In this case, by \cite[Proposition~4.21]{MinasyanOsin}, at least one of the following holds:
\begin{enumerate}[label=(\roman*)]
\item \label{it:MO-AH} $G$ is acylindrically hyperbolic;
\item \label{it:MO-snormal} $G$ is an HNN-extension of a  one-relator group $H = \langle a,b \mid r \rangle$ with associated infinite cyclic subgroups $\langle a \rangle$ and $\langle b \rangle$ that have non-trivial intersection in $H$ (in this case $G$ commensurates $\langle a \rangle$);
\item \label{it:MO-mappingtorus} $G$ is an ascending HNN-extension of a finitely generated free group.
\end{enumerate}
Thus to prove the lemma it remains to consider cases \ref{it:MO-snormal} and \ref{it:MO-mappingtorus}. If \ref{it:MO-snormal} holds then we are 
obviously in case \ref{it:comm-cyc}, so assume \ref{it:MO-mappingtorus}, i.e., $G$ is an ascending HNN-extension of a finitely generated free group $F$ with respect to an injective endomorphism $\varphi\colon F \to F$. If
$F$ is trivial then $G \cong \Z$, and if
$F$ is infinite cyclic then $G$ is isomorphic to a solvable Baumslag--Solitar group $\langle a,t \mid tat^{-1}=a^l\rangle$, for some $l \in \Z\setminus\{0\}$. In either case $G$ commensurates an infinite cyclic subgroup, so \ref{it:comm-cyc} holds. 

Thus we can further assume that $F$ is non-abelian. 
If $\varphi\colon F \to F$ is not surjective then we are in case \ref{it:st_asc_HNN}. Therefore, we may suppose that $\varphi$ is an automorphism of $F$, and so $G \cong F \rtimes_\varphi \Z$ is a free-by-cyclic group (where $F$ is finitely generated and non-abelian).
If the image of $\varphi$ has infinite order in the outer automorphism group $\operatorname{Out}(F)$, then $G$ is acylindrically hyperbolic by \cite[Corollary~1.5]{GenevoisHorbez}, so we are in case \ref{it:AH}. Otherwise, some power of $\varphi$ is an inner automorphism of $F$, implying that $G$ has a finite-index normal subgroup $H$ splitting as the direct product $F \times \langle a\rangle$, for some infinite order element $a \in G$. Since $F$ is non-abelian, $\langle a \rangle$ is the centre of $H$ and so it is characteristic in $H$. It follows that $\langle a \rangle \lhd G$ and we are in case \ref{it:comm-cyc}. This finishes the consideration of case \ref{it:MO-mappingtorus}.

Finally, suppose that \ref{it:comm-cyc} holds and let us show that $G$ is a generalised Baumslag--Solitar group (essentially, this was  observed in \cite[Theorem~3.2]{Bu-Kr}). If the defining relator $W$ from \eqref{eq:1-rel_pres} is a proper power then $G$ is hyperbolic by Newman's Spelling theorem \cite[Theorem~IV.5.5]{LyndonSchupp} and is not virtually cyclic (for example, because it contains free subgroups of rank $2$ \cite[Theorem~1]{Ree-Men}). On the other hand, it is well-known that the maximal subgroup commensurating an infinite cyclic subgroup in a hyperbolic group is virtually cyclic (see \cite[Lemmas~1.16 and 1.17]{Olsh}), which contradicts \ref{it:comm-cyc}. Therefore $W$ cannot be a proper power, hence $G$ has cohomological dimension at most $2$ by a theorem of Lyndon \cite[Corollary~11.2]{Lyndon}. We can now apply a result of P.~Kropholler \cite[Theorem~C]{PKr} stating that $G$ must be a generalised Baumslag--Solitar group.
\end{proof}

We are now ready to prove Theorem~\ref{thm:one-relator-free-or-snormal} from the Introduction. Our argument combines Propositions~\ref{prop:1-rel_ah-refinement} and \ref{prop:HNN-free-products} with the work of Newman \cite{Newman-thesis} on one-relator groups with torsion and known properties of acylindrically hyperbolic groups obtained by Abbott and Dahmani \cite{AbbottDahmani} and Osin \cite{Osin}.

\begin{proof}[Proof of Theorem~\ref{thm:one-relator-free-or-snormal}]
If $G$ is a strictly ascending HNN-extension of a finitely generated non-abelian free group then the desired result follows from Proposition~\ref{prop:HNN-free-products}.
Thus, in view of Proposition~\ref{prop:1-rel_ah-refinement}, we can assume that $G$ is acylindrically hyperbolic. 

Let us show that if $N \lhd G$ is a finite normal subgroup then $N=\{1\}$. Indeed, suppose there is $h \in N \setminus \{1\}$. Then $\mathrm{C}_G(h)$ has finite index in $G$ and $G$ is a one-relator group with torsion. By a result of Newman \cite[Corollary~2.3.4]{Newman-thesis}, $C_G(h)$ must be cyclic, hence $G$ will be virtually cyclic. But this contradicts the fact that $G$ contains non-abelian free subgroups (see \cite[Theorem~1.3.11]{Newman-thesis}). Thus $G$ cannot have any non-trivial finite normal subgroups.

After renumbering we can assume that the subgroups $A_1,\dots,A_q$ are cyclic and the subgroups $A_{q+1},\dots,A_s$ are non-cyclic, for some $q \in \{0,\dots,s\}$. If $q=s$, i.e., every $A_i$ is cyclic, then by \cite[Theorem~2.3]{AbbottDahmani}, there exists an infinite order element $f \in G$ such that $\langle f,A_i \rangle \cong \langle f \rangle * A_i$, for all $i=1,\dots,s$, so we are done.

Therefore we can assume that $q<s$, so that $A_s$ is not cyclic. For each $j=q+1,\dots,s$, let $Z_j \neq \{1\}$ denote the centre of $A_j$. Since $A_j \leqslant C_G(Z_j)$, $G$ must be torsion-free by the result of Newman mentioned above. It follows that $A_j$ is not virtually cyclic (because torsion-free virtually cyclic groups are cyclic) and its centre $Z_j$ is infinite, for every $j=q+1,\dots,s$.

Observe that if $G$ acts acylindrically on a hyperbolic geodesic metric space then the restriction of this action to $A_j$, $q+1\le j \le s$, must have bounded orbits (i.e., $A_j$ will be elliptic with respect to this action). Indeed, otherwise, since $A_j$ is not virtually cyclic it must be acylindrically hyperbolic itself by \cite[Theorem~1.1]{Osin}, which is impossible because $A_j$ has infinite centre (see \cite[Corollary~7.3]{Osin}).

By \cite[Theorem~1.2]{Osin}, $G$ admits a non-elementary co-bounded acylindrical action on a hyperbolic geodesic metric space $(X,\mathrm{d}_X)$. By \cite[Lemma~2.4 and the proof of Theorem~2.3]{AbbottDahmani} there is a non-elementary acylindrical action of $G$ on a different hyperbolic geodesic metric space $(Y,\mathrm{d}_Y)$ such that the cyclic subgroups $A_1,\dots,A_q$ all act elliptically on $Y$. On the other hand, the restriction of the action to the subgroups $A_{q+1},\dots,A_s$ must also be elliptic, as observed above. Therefore we can apply \cite[Theorem~0.1]{AbbottDahmani} claiming that there exists an infinite order element $f \in G$ such that $\langle f,A_i \rangle \cong \langle f \rangle * A_i$ in $G$, for all $i=1,\dots,s$. So the proof of the theorem is complete.
\end{proof}

\begin{lemma}\label{lem:GBS->non-cyc_sbps_contain_comm_cyclic} Let $G$ be a generalised Baumslag--Solitar group $G$. Then $G$ commensurates an infinite cyclic subgroup $\langle a \rangle \leqslant G$. 
If a subgroup $B \leqslant G$ is not free then $B \cap \langle a \rangle \neq \{1\}$.    
\end{lemma}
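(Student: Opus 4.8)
The plan is to first recall why a generalised Baumslag--Solitar group $G$ commensurates an infinite cyclic subgroup. By definition $G=\pi_1(\mathbb{Y})$ for a finite graph of groups $\mathbb{Y}$ with all vertex and edge groups infinite cyclic. Pick any vertex group $\langle a\rangle\cong\Z$. Moving along any edge one passes between two infinite cyclic groups via inclusions of finite-index cyclic edge groups, so any two vertex-group conjugates are commensurable; since $G$ is generated by the vertex groups together with the stable letters, and conjugating $\langle a\rangle$ by a generator produces a subgroup commensurable with $\langle a\rangle$, an easy induction on word length shows that $g\langle a\rangle g^{-1}\cap\langle a\rangle$ has finite index in $\langle a\rangle$ for every $g\in G$. (Alternatively one can cite that $G$ acts on the Bass--Serre tree $\cT$ of $\mathbb{Y}$ with all edge stabilisers of finite index in the adjacent vertex stabilisers, and $\langle a\rangle$ is a vertex stabiliser; this is essentially the observation already used in Remark~\ref{rem:at_least_3_gens}.) Write $L\leqslant G$ for the maximal subgroup commensurating $\langle a\rangle$; then $L=G$.

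For the second assertion, suppose $B\leqslant G$ with $B\cap\langle a\rangle=\{1\}$; I must show $B$ is free. Let $\cT$ be the Bass--Serre tree of $\mathbb{Y}$, so $G$ acts on $\cT$ with infinite cyclic vertex and edge stabilisers, and $B$ acts on $\cT$ as well. The quotient $B\backslash\cT$ together with the induced edge- and vertex-stabiliser data realises $B$ as the fundamental group of a graph of groups whose vertex and edge groups are $B\cap($conjugate of a vertex or edge group of $\mathbb{Y})$. Each such group is either trivial or infinite cyclic, and I claim it must be trivial: any conjugate $g\langle v\rangle g^{-1}$ of a vertex group of $\mathbb{Y}$ is commensurable with $\langle a\rangle$ (by the commensuration just established), so if $B\cap g\langle v\rangle g^{-1}\neq\{1\}$ then this intersection is an infinite cyclic subgroup commensurable with $\langle a\rangle$; but then a suitable nonzero power of its generator lies in $\langle a\rangle$, forcing $B\cap\langle a\rangle\neq\{1\}$, a contradiction. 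Hence $B$ acts on $\cT$ with trivial edge (and vertex) stabilisers, i.e.\ freely, so $B$ is free.

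The step that needs the most care is making precise that $B\cap g\langle v\rangle g^{-1}$ nontrivial implies $B\cap\langle a\rangle$ nontrivial. Concretely: if $\{1\}\neq B\cap g\langle v\rangle g^{-1}=\langle w\rangle$ with $w$ of infinite order, then since $G=L$ commensurates $\langle a\rangle$ we have $w^{k}\in\langle a\rangle$ for some $k\in\Z\setminus\{0\}$ — indeed $w\langle a\rangle w^{-1}\cap\langle a\rangle$ has finite index in $\langle a\rangle$, and in a generalised Baumslag--Solitar group the normaliser/commensurator structure forces that the infinite cyclic groups commensurable with $\langle a\rangle$ all share a common finite-index subgroup along any path in $\cT$, so $\langle w\rangle\cap\langle a\rangle\neq\{1\}$. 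Then $w^{k}\in B\cap\langle a\rangle$, contradicting the hypothesis. I expect this commensuration bookkeeping to be the only real obstacle; everything else is a routine application of Bass--Serre theory, using that a group acting on a tree with trivial stabilisers is free (see \cite[Section~I.4]{DD}).
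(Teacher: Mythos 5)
Your proposal is correct and follows essentially the same route as the paper: both arguments rest on the action of $G$ on the Bass--Serre tree $\cT$ with infinite cyclic vertex and edge stabilisers, the observation that edge stabilisers have finite index in the adjacent vertex stabilisers (so any two vertex stabilisers share a common finite-index subgroup, namely the pointwise stabiliser of the geodesic joining the two vertices), and the fact that a group acting freely on a tree is free — the paper simply runs the second half in the contrapositive direction, extracting a non-trivial elliptic element of $B$ directly. The one imprecision worth flagging is your clause ``since $G$ commensurates $\langle a\rangle$ we have $w^{k}\in\langle a\rangle$'', which is not by itself a valid inference (the stable letter of $BS(1,2)$ commensurates $\langle a\rangle$ yet no non-zero power of it lies in $\langle a\rangle$); what actually carries the step is the correct reason you give immediately afterwards, namely that $w$ lies in a \emph{vertex stabiliser} and any two vertex stabilisers intersect in a subgroup of finite index in each.
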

\begin{proof} By definition, $G$ is torsion-free and admits an action on a tree $\cT$ with infinite cyclic vertex and edge stabilisers. Let $\langle a \rangle$ be the stabiliser of any vertex $v$ in $\cT$. Then it is easy to see that $G$ commensurates $\langle a \rangle$ (because every edge stabiliser has finite index in the stabilisers of its endpoints). 

Let $B \leqslant G$ be a non-free subgroup. Then, by the well-known fact that a group acting on a tree freely must be free (see, for example, \cite[Theorem~I.8.2]{DD}), there must exist an elliptic element $b \in B \setminus\{1\}$ fixing a vertex $u$ of $\cT$. But $\st_G(u) \cap \st_G(v)=\st_G([u,v])$ has finite index in $\st_G(u)$, whence $b^n \in \langle a \rangle \setminus\{1\}$, for some $n \in \N$. Thus $b^n \in B\cap \langle a \rangle \neq\{1\}$.    
\end{proof}

We say that a subgroup $B$ of a group $G$ is \emph{weakly malnormal} if there exists $g \in G$ such that the intersection $B \cap gBg^{-1}$ is finite. In view of Lemma~\ref{lem:GBS->non-cyc_sbps_contain_comm_cyclic}, the existence of a non-free weakly malnormal subgroup in $G$ implies that $G$ cannot be a generalised Baumslag--Solitar group. This can be combined with  Theorem~\ref{thm:one-relator-free-or-snormal} to obtain the following corollary.

\begin{cor}\label{cor:A-weakly_malnormal} Suppose that $A$ is a subgroup of a one-relator group $G$ such that $A$ has non-trivial centre. If $G$ contains a non-free weakly malnormal subgroup then there is an infinite order element $f \in G$ such that the subgroup $\langle f ,A \rangle$ is naturally isomorphic to $\langle f \rangle *A \cong \Z*A$ in $G$.    
\end{cor}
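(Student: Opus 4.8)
The plan is to deduce the corollary directly from Theorem~\ref{thm:one-relator-free-or-snormal} by showing that the hypotheses of that theorem are met, namely that $G$ is neither cyclic nor a generalised Baumslag--Solitar group. So first I would record that the subgroup $A$ has non-trivial centre, hence is non-trivial, and in particular $G$ is non-trivial. If $G$ were cyclic then every subgroup of $G$ would be cyclic, hence free, so $G$ would contain no non-free subgroup at all, contradicting the hypothesis that $G$ has a non-free weakly malnormal subgroup. Thus $G$ is non-cyclic.

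Next I would rule out the possibility that $G$ is a generalised Baumslag--Solitar group. Suppose it were, and let $B \leqslant G$ be a non-free weakly malnormal subgroup, so that there exists $g \in G$ with $B \cap gBg^{-1}$ finite. By Lemma~\ref{lem:GBS->non-cyc_sbps_contain_comm_cyclic}, $G$ commensurates an infinite cyclic subgroup $\langle a \rangle \leqslant G$, and since $B$ is not free, the same lemma gives $B \cap \langle a \rangle \neq \{1\}$. Now $gBg^{-1}$ is also non-free (being isomorphic to $B$), so applying the lemma again yields $gBg^{-1} \cap \langle a \rangle \neq \{1\}$; say $1 \neq b_1 \in B \cap \langle a \rangle$ and $1 \neq b_2 \in gBg^{-1} \cap \langle a \rangle$. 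Since $\langle a \rangle$ is infinite cyclic, any two of its non-trivial subgroups intersect non-trivially, so $\langle b_1 \rangle \cap \langle b_2 \rangle \neq \{1\}$; pick a non-trivial element $c$ in this intersection. Then $c \in B$ and $c \in gBg^{-1}$, and $c$ has infinite order (as a non-trivial element of the torsion-free group $G$), so $\langle c \rangle \leqslant B \cap gBg^{-1}$ is an infinite subgroup, contradicting weak malnormality of $B$. Hence $G$ is not a generalised Baumslag--Solitar group.

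With these two facts established, $G$ satisfies the hypotheses of Theorem~\ref{thm:one-relator-free-or-snormal}, which we apply with $s = 1$ and $A_1 = A$: there exists an infinite order element $f \in G$ such that $\langle f, A \rangle$ is naturally isomorphic to $\langle f \rangle * A$ in $G$. Since $f$ has infinite order, $\langle f \rangle \cong \Z$, giving $\langle f, A \rangle \cong \Z * A$ as claimed. The only point requiring any care is the argument that a non-free weakly malnormal subgroup is incompatible with being a generalised Baumslag--Solitar group, and that is precisely the content of the computation above using Lemma~\ref{lem:GBS->non-cyc_sbps_contain_comm_cyclic} twice together with the fact that distinct non-trivial subgroups of $\Z$ always intersect non-trivially; everything else is immediate.
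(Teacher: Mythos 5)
Your proposal is correct and follows essentially the same route as the paper: the paper's own justification is precisely the remark preceding the corollary that, by Lemma~\ref{lem:GBS->non-cyc_sbps_contain_comm_cyclic}, a non-free weakly malnormal subgroup is incompatible with $G$ being a generalised Baumslag--Solitar group, after which Theorem~\ref{thm:one-relator-free-or-snormal} applies; your double application of that lemma to $B$ and $gBg^{-1}$, together with the fact that non-trivial subgroups of $\Z$ meet non-trivially, is exactly the intended filling-in of that remark. The only quibble is your dismissal of the cyclic case via ``every subgroup of a cyclic group is free'', which fails for finite cyclic $G$ (where the corollary as literally stated is degenerate anyway, since a non-trivial finite cyclic group is itself a non-free weakly malnormal subgroup); the paper glosses over this edge case entirely, and it is harmless in the corollary's actual application.
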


\section{Proof of Theorem~\ref{thm:main}} \label{sec:ThmMain}
In this section we prove Theorems~\ref{thm:P_n} and \ref{thm:main}. The proof of the former is nearly complete, modulo the following observation. 

\begin{lemma}\label{lem:inj_on_trace->inj_on_unions} Suppose that $\Gamma$ is a graph, $G$ is a group and $\psi\colon A(\Gamma) \to G$ is a group homomorphism that is injective on the monoid of positive words $A(\Gamma)^+$ in $A(\Gamma)$. If $U,V,W$ are subsets of $V(\Gamma)$ spanning complete subgraphs and $x,y \in A(\Gamma)^+$ then $\psi$
is injective on the union 
\begin{equation*}\label{eq:new_union_of_3}
\mathcal{U}=x\langle U \rangle x^{-1} \cup \langle V\rangle \cup y^{-1} \langle W \rangle y \subseteq A(\Gamma).    
\end{equation*}
\end{lemma}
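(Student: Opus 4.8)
The plan is to reduce the injectivity of $\psi$ on the union $\mathcal{U}$ to a statement about positive words, so that the hypothesis that $\psi|_{A(\Gamma)^+}$ is injective applies directly. The key point is that each of the three subgroups $x\langle U\rangle x^{-1}$, $\langle V\rangle$, $y^{-1}\langle W\rangle y$ is a conjugate of a free abelian group spanned by vertices, and for abelian (indeed, for any) subgroups of a right-angled Artin group spanned by a clique, every element is a product of a positive word and the inverse of a positive word that ``live in the same coset'' in a controlled way. Since $U$, $V$, $W$ span complete subgraphs, $\langle U\rangle$, $\langle V\rangle$, $\langle W\rangle$ are free abelian, so every element of, say, $\langle V\rangle$ can be written as $v_+ v_-^{-1}$ where $v_+,v_-$ are positive words on $V$ with disjoint supports (or one can simply multiply by a large positive power of $\prod_{u\in V}u$ to clear denominators). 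The same works for $x\langle U\rangle x^{-1}$ after first writing $x$ itself in the form $x=x_+x_-^{-1}$ with $x_\pm\in A(\Gamma)^+$, via the result of Paris that $A(\Gamma)^+\cong T(\Gamma)$ embeds in $A(\Gamma)$ and the standard normal form.

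First I would record the following clearing-denominators observation: there is a single positive element $p\in A(\Gamma)^+$ (for instance a sufficiently high power of the product of all vertices, or more economically a suitable product of the vertices appearing in $x$, $y$, $U$, $V$, $W$) such that $p\,g\,p$ lies in $A(\Gamma)^+$ for every $g$ in the finite-dimensional subgroups $x\langle U\rangle x^{-1}$, $\langle V\rangle$, $y^{-1}\langle W\rangle y$ — more precisely, for any fixed $g$ in one of these subgroups, $p g p\in A(\Gamma)^+$ once the exponents in $p$ dominate the (finitely many) negative exponents occurring in a normal form of $g$. This uses only that conjugating a positive generator by a positive word, and multiplying positive words, stays positive, together with the fact that in each of the three subgroups elements have a uniformly bounded ``negative part'' once we fix the subgroup — but actually we do not even need uniformity: it suffices to handle two elements at a time.

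Next, suppose $g,h\in\mathcal{U}$ with $\psi(g)=\psi(h)$. Pick $p\in A(\Gamma)^+$ large enough (depending on $g$ and $h$) so that $pgp$ and $php$ both lie in $A(\Gamma)^+$; this is possible by the previous step since $g$ and $h$ each lie in one of the three finite-dimensional subgroups. Then $\psi(pgp)=\psi(p)\psi(g)\psi(p)=\psi(p)\psi(h)\psi(p)=\psi(php)$, and since $pgp,php\in A(\Gamma)^+$ and $\psi$ is injective on $A(\Gamma)^+$, we get $pgp=php$ in $A(\Gamma)$, hence $g=h$ by cancellativity in the group $A(\Gamma)$. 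This proves $\psi|_{\mathcal U}$ is injective.

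The only genuine content — and the step I expect to require the most care — is the clearing-denominators claim: that for any $g$ in a conjugate $x\langle U\rangle x^{-1}$ of a clique-subgroup, there is a positive $p$ with $pgp\in A(\Gamma)^+$. For $g\in\langle V\rangle$ with $V$ a clique this is transparent (free abelian monoid/group on $V$), but for the conjugated subgroups one must first express $x$ and $y$ via their positive/negative parts using Paris's theorem and the normal-form theory for RAAGs (Green, Servatius), and then check that conjugating and multiplying keeps track of supports correctly. I would phrase this cleanly by noting that for any element $w\in A(\Gamma)$ there exist $w_+,w_-\in A(\Gamma)^+$ with $w=w_+w_-^{-1}$ (write $w$ as a reduced word and push all inverse-letters to the right using the commutation relations is not quite enough, so instead: induct on word length, or invoke that $A(\Gamma)$ is a group of fractions of $A(\Gamma)^+$ — this is part of Paris's theorem / the theory of trace monoids being cancellative with common multiples), and then for $g=x u x^{-1}$ with $u\in\langle U\rangle^+$ take $p$ dominating the negative parts of both $x$ and $x^{-1}$. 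With that lemma in hand, everything else is the short argument above.
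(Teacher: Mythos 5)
Your overall strategy --- multiply $g$ and $h$ by suitable positive elements so that $\psi(g)=\psi(h)$ becomes an equality between $\psi$-images of elements of $A(\Gamma)^+$, and then invoke injectivity there --- is exactly the strategy of the paper's proof. However, the justifications you offer for the ``clearing denominators'' step, which you yourself identify as the crux, contain several false claims, and as written the step does not go through. First, $A(\Gamma)$ is \emph{not} a group of fractions of $A(\Gamma)^+$ unless $\Gamma$ is complete: already in $A(\Gamma)\cong F_2$, for $\Gamma$ consisting of two isolated vertices $\alpha,\beta$, the element $\alpha^{-1}\beta$ cannot be written as $w_+w_-^{-1}$ with $w_\pm$ positive (a reduced word of that form either begins with a positive letter or is entirely negative). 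Paris's theorem gives the embedding $T(\Gamma)\hookrightarrow A(\Gamma)$, not an Ore or fraction structure. Second, ``conjugating a positive generator by a positive word stays positive'' is false ($\beta\alpha\beta^{-1}$ is not positive in $F_2$), and consequently taking $p$ to be a high power of the product of all vertices does not make $pgp$ positive: for $g=\beta\alpha^{-1}\beta^{-1}$ the letter $\alpha^{-1}$ sits in the middle of the resulting word and no amount of positive padding on the outside removes it. The relevant condition is not that the exponents of $p$ ``dominate'' anything, but that the right multiplier be left-divisible by a specific positive element (namely $xb$, if $g=xab^{-1}x^{-1}$ with $a,b\in\langle U\rangle\cap A(\Gamma)^+$), and symmetrically on the left. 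Third, elements of $y^{-1}\langle W\rangle y$ are \emph{left} fractions $(dy)^{-1}(cy)$ rather than right fractions $w_+w_-^{-1}$, so the single normal form you propose does not cover the third subgroup at all.

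All three defects are repaired by the observation the paper actually uses: since $U,V,W$ span cliques, the subgroups $\langle U\rangle,\langle V\rangle,\langle W\rangle$ are free abelian, so any $g\in x\langle U\rangle x^{-1}$ can be written as $xab^{-1}x^{-1}=(xa)(xb)^{-1}$ with $xa,xb\in A(\Gamma)^+$ (this is where positivity of $x$ is used), any $h\in y^{-1}\langle W\rangle y$ as $(dy)^{-1}(cy)$ with $cy,dy\in A(\Gamma)^+$, and elements of $\langle V\rangle$ admit fractions of either kind. The equality $\psi(g)=\psi(h)$ then rearranges directly to $\psi(dyxa)=\psi(cyxb)$, an equality of images of positive elements, whence $dyxa=cyxb$ in $A(\Gamma)^+$ and so $g=h$. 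This is your argument with the correct multipliers ($dy$ on the left, $xb$ on the right, chosen per pair $g,h$), but the proposal as written rests on false intermediate statements, so its central step has a genuine gap that needs this repair.
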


\begin{proof} Suppose that $\psi(g)=\psi(h)$, for some $g,h \in \mathcal{U}$. We will only consider the case when $g\in x\langle U \rangle x^{-1}$ and $h \in y^{-1} \langle W \rangle y$, as the other cases can be treated similarly.

Since $U$ spans a complete subgraph of $\Gamma$, the subgroup $\langle U \rangle$ is abelian in $A(\Gamma)$, so every element in this subgroup can be written as a product $a b^{-1}$, where $a,b \in \langle U \rangle \cap A(\Gamma)^+$; and similarly for $W$. Therefore, $g=x a b^{-1} x^{-1}$ and $h=y^{-1} d^{-1}c y$, for some $a,b,c,d \in A(\Gamma)^+$. The equation $\psi(g)=\psi(h)$ is therefore equivalent to 
\begin{equation*}\label{eq:re-writing_of_eq}
\psi(dyxa )=\psi(cyxb),
\end{equation*}
which is an equality between $\psi$-images of elements  $dyxa,cyxb \in A(\Gamma)^+$.  But $\psi$ is injective on $A(\Gamma)^+$, whence the arguments of $\psi$ in the re-arranged equation must be the same, which implies that $g=h$ in $A(\Gamma)$. 
Therefore, $\psi$ is injective on $\mathcal{U}$, and the lemma is proved. 
\end{proof}

\begin{proof}[Proof of Theorem~\ref{thm:P_n}] Suppose that $n \in \{3,4\}$, $G$ is a one-relator group and elements $a_1,\dots,a_n \in G$ generate a copy of $T(P_n)$ in $G$ in the natural way. Then there is a homomorphism $\psi\colon A(P_n) \to G$, where $A(P_n)$ is given by the presentation \eqref{eq:pres_HN}, such that $\psi(\alpha_i)=a_i$, for $i=1,\dots,n$, and this homomorphism is injective on the monoid of positive words $A(P_n)^+$.  
Now we can combine Lemma~\ref{lem:inj_on_trace->inj_on_unions} with Propositions~\ref{prop:P3} and \ref{prop:n=4} to conclude that $A(P_n)$ embeds as a subgroup of $\langle a_1,\dots,a_n  \rangle \leqslant G$, as required.
\end{proof}

In order to establish Theorem~\ref{thm:main} we need a few more auxiliary statements. Given a finite graph $\Gamma$, we will use $\d(\Gamma)$ to denote the maximum of the  diameters of connected components of $\Gamma$.

\begin{lemma}\label{lem:tree_of_small_diam} For any $d \in \{0,1,2\}$, if $\Gamma$ is a finite tree with $\d(\Gamma) \le d$ then $A(\Gamma)$ embeds into $A(P_{d+1})$. 
\end{lemma}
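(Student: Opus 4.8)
The plan is to handle the three values of $d$ in turn, since each is elementary. For $d=0$, the tree $\Gamma$ must be a single vertex (a finite connected tree of diameter $0$), so $A(\Gamma)\cong\Z\cong A(P_1)$ and there is nothing to prove. For $d=1$, a finite tree of diameter at most $1$ is either a single vertex or a single edge; in the latter case $A(\Gamma)\cong\Z^2\cong A(P_2)$, and in the former $A(\Gamma)\cong\Z$ embeds into $\Z^2=A(P_2)$. So the only case requiring actual work is $d=2$.

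For $d=2$, let $\Gamma$ be a finite tree of diameter at most $2$ with, say, $m$ vertices. A tree of diameter at most $2$ is a star: there is a central vertex $v_0$ adjacent to all of the remaining vertices $v_1,\dots,v_{m-1}$, and there are no other edges. Thus $A(\Gamma)$ has the presentation $\langle v_0,v_1,\dots,v_{m-1}\mid v_0v_i=v_iv_0,\ i=1,\dots,m-1\rangle$, which means $A(\Gamma)\cong F_{m-1}\times\Z$, where the free factor $F_{m-1}$ is freely generated by $v_1,\dots,v_{m-1}$ and the central $\Z$ is generated by $v_0$. Recall from the discussion before Proposition~\ref{prop:P3} that $A(P_3)\cong F_2\times\Z$ with $F_2=\langle\alpha,\gamma\rangle$ free and $\Z=\langle\beta\rangle$ central. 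Since $F_{m-1}$ embeds into $F_2$ (a free group of rank $\le 2$ contains free subgroups of every countable rank; concretely one may send $v_i\mapsto \alpha^{-i}\gamma\alpha^{i}$, or use any standard free basis of a finite-index or infinite-index free subgroup), we get an embedding $F_{m-1}\times\Z\hookrightarrow F_2\times\Z$ by taking the product of this embedding with the identity on the central $\Z$-factor. This is the desired embedding $A(\Gamma)\hookrightarrow A(P_3)=A(P_{d+1})$.

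Strictly speaking one should check that the map $F_{m-1}\times\Z\to F_2\times\Z$ obtained as a product of an injection on the first factor and an isomorphism on the second is injective: this is immediate since if $(w,k)$ is in the kernel then $k=0$ and $w$ lies in the kernel of $F_{m-1}\hookrightarrow F_2$, hence $w=1$. I do not anticipate any real obstacle here; the only mild subtlety is remembering that $d=2$ forces $\Gamma$ to be a star (equivalently, that a finite tree has diameter at most $2$ iff it has at most one vertex of degree $\ge 2$), so that $A(\Gamma)$ splits as a direct product of a free group and $\Z$. For smaller $d$ the statement is degenerate. One could also phrase the whole argument uniformly by noting that $A(P_{d+1})$ always contains $F_2\times\Z$ when $d\ge 2$ (via the induced subgraph $P_3$ of $P_{d+1}$, see Subsection~\ref{ssec:prelim-raags}) and $\Z^2$ when $d=1$ and $\Z$ when $d=0$, but the case-by-case treatment above is the cleanest.
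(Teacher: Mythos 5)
Your proof is correct and follows essentially the same route as the paper: the cases $d\in\{0,1\}$ are dispatched as degenerate, and for $d=2$ one observes that $\Gamma$ is a star, so $A(\Gamma)\cong F_{m-1}\times\Z$ embeds into $F_2\times\Z\cong A(P_3)$. The paper's proof is just a more terse version of the same argument.
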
    

\begin{proof}
This is obvious if $\d(\Gamma) \in \{0,1\}$. In the remaining case, when $\d(\Gamma)=d=2$, $\Gamma$ consists of a single central vertex adjacent to $k \geq 2$ leaves (vertices of degree $1$), implying that $A(\Gamma) \cong F_k \times \Z$, where $F_k$ denotes the free group of rank $k$. Clearly this group embeds into $F_2 \times \Z \cong A(P_3)$, as required.
\end{proof}

It is a result of Kim and Koberda \cite[Theorem~1.8]{KimKoberda}, that $A(P_4)$ contains a subgroup isomorphic to $A(\Gamma)$, for any finite forest $\Gamma$. For disconnected forests of small diameters we will use the following lemma, which is the reason why in the previous two sections we were interested in finding free products $A*\Z$.

\begin{lemma} \label{lem:free-products-embed}
Let $\Gamma$ be a non-empty finite forest with $d=\d(\Gamma) \leq 2$. Then $A(\Gamma)$ embeds into the free product $A(P_{d+1}) * \Z$.    
\end{lemma}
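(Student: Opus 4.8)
The plan is to split $\Gamma$ into its connected components, embed the corresponding right-angled Artin groups into $A(P_{d+1})$ via Lemma~\ref{lem:tree_of_small_diam}, and then ``spread out'' the resulting free factors inside $A(P_{d+1})*\Z$ by conjugating by powers of the generator of the $\Z$-factor. To begin, write $\Gamma=T_1\sqcup\dots\sqcup T_r$ as the disjoint union of its connected components; each $T_i$ is a finite tree with $\d(T_i)\le d$, and $r\ge 1$ since $\Gamma$ is non-empty. Since a disjoint union of graphs yields a free product of the associated right-angled Artin groups, $A(\Gamma)\cong A(T_1)*\dots*A(T_r)$. By Lemma~\ref{lem:tree_of_small_diam} there is, for each $i$, an injective homomorphism $\phi_i\colon A(T_i)\hookrightarrow A(P_{d+1})$; hence the homomorphism $A(\Gamma)\to A(P_{d+1})*\dots*A(P_{d+1})$ (with $r$ free factors) sending the $i$-th free factor into the $i$-th copy of $A(P_{d+1})$ via $\phi_i$ is injective, because it carries reduced words in the free product to reduced words.

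It therefore remains to observe that for any group $K$ and any $r\ge 1$ the free product of $r$ copies of $K$ embeds into $K*\Z$. Letting $t$ denote a generator of the $\Z$-factor, I would check directly that the subgroups $K,\,tKt^{-1},\,\dots,\,t^{r-1}Kt^{-(r-1)}$ of $K*\Z$ generate their free product: a non-trivial reduced word in these subgroups, when rewritten as a word over $K\cup\langle t\rangle$, is reduced in the free product $K*\langle t\rangle$ and hence non-trivial. (Equivalently, this is the familiar fact that the kernel of the retraction $K*\Z\to\Z$ killing $K$ is a free product of conjugates of $K$, which also follows from Bass--Serre theory.) Applying this with $K=A(P_{d+1})$ and composing with the embedding from the previous paragraph yields $A(\Gamma)\hookrightarrow A(P_{d+1})*\Z$, as required.

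I do not expect a real obstacle here: the only step that is not a direct citation is the embedding $K^{*r}\hookrightarrow K*\Z$, and that is routine. The one point to keep an eye on is the behaviour in the degenerate cases. When $d=0$ we have $A(P_{d+1})=A(P_1)\cong\Z$, so $A(\Gamma)$ is a finitely generated free group and the statement specialises to the classical embedding $F_r\hookrightarrow F_2$; and when $r=1$ the $\Z$-factor is not used at all and the conclusion is immediate from Lemma~\ref{lem:tree_of_small_diam}. In both situations the argument above applies verbatim, so no separate treatment is needed.
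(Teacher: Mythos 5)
Your proof is correct and takes essentially the same approach as the paper: decompose $\Gamma$ into components, embed each $A(T_i)$ into $A(P_{d+1})$ via Lemma~\ref{lem:tree_of_small_diam}, and realise the $r$-fold free power of $A(P_{d+1})$ inside $A(P_{d+1})*\Z$ as the subgroup generated by the conjugates $t^iA(P_{d+1})t^{-i}$. The only difference is that you spell out the reduced-word verification that these conjugates generate their free product, which the paper states without proof.
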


\begin{proof} Let $\Gamma_1,\dots,\Gamma_n$ be the connected components of $\Gamma$, then $\displaystyle A(\Gamma) \cong \mathop{\ast}_{i=1}^n A(\Gamma_i)$.
Moreover, since the diameter of each component is at most $d \le 2$, Lemma~\ref{lem:tree_of_small_diam} tells us that $A(\Gamma_i)$ embeds into $A=A(P_{d+1})$, for every $i=1,\dots,n$. Therefore $A(\Gamma)$ embeds into the free product of $n$ copies of $A$. This $n$-fold free power of $A$ is isomorphic to the free product
\[A*fAf^{-1}*f^2Af^{-2}* \dots *f^{n-1} Af^{-(n-1)},\] which  naturally sits as a subgroup of $A*\langle f \rangle \cong A*\Z$. Hence $A(\Gamma)$ embeds into $A*\Z$.
\end{proof}

We are now ready to prove Theorem~\ref{thm:main}.

\begin{proof}[Proof of Theorem~\ref{thm:main}]
The assumption that $T(\Gamma)$ embeds in $G$ means that there is a homomorphism $\psi\colon A(\Gamma) \to G$ which is injective on the positive monoid $A(\Gamma)^+$.
Our proof will use the maximal diameter $\d(\Gamma)$, of a connected component of $\Gamma$. Note that $\d(\Gamma) \ge 1$ because $\Gamma$ contains at least one edge.

Suppose, first, that $\d(\Gamma)=d \in \{1,2\}$. Then $\Gamma$ has an induced subgraph $\Delta$ isomorphic to $P_{d+1}$, and let $\alpha,\beta$ be any two distinct adjacent vertices of $\Delta$.
The positive submonoid generated by $V=V(\Delta)$ in $A(\Gamma)^+$ is naturally isomorphic to $T(P_{d+1})$, so by Theorem~\ref{thm:P_n}, $A(P_{d+1})$ embeds into $A=\psi(\langle V \rangle)$ in $G$. If $\Gamma$ is connected then $A(\Gamma)$ embeds into $A(P_{d+1})$ by Lemma~\ref{lem:tree_of_small_diam} and we are done. If $\Gamma$ is disconnected then there is a vertex $\lambda \in V(\Gamma)$ that is not adjacent to $\alpha$ or $\beta$ in $\Gamma$, which implies that 
\begin{equation*}\label{eq:alm_maln}
\langle\alpha,\beta\rangle \cap\lambda\langle\alpha,\beta\rangle\lambda^{-1} =\{1\} \text{ in }A(\Gamma).    
\end{equation*}
According to Lemma~\ref{lem:inj_on_trace->inj_on_unions}, $\psi$ is injective on the union $\langle\alpha,\beta\rangle \cup\lambda\langle\alpha,\beta\rangle\lambda^{-1}$ and on the centre of $\langle V \rangle \cong A(P_{d+1})$. Therefore, the subgroup $A=\psi(\langle V \rangle) \leqslant G$ has infinite centre and its subgroup $B=\psi(\langle\alpha,\beta\rangle) \cong \Z^2$ is weakly malnormal  in $G$. So, by Corollary~\ref{cor:A-weakly_malnormal}, the free product $A*\Z$ embeds in $G$, whence $ A(P_{d+1})*\Z$ embeds in $G$. We can now apply Lemma~\ref{lem:free-products-embed} to conclude that $A(\Gamma)$ embeds in $G$.

Finally, suppose $\d(\Gamma) \ge 3$. Then $\Gamma$ has an induced subgraph isomorphic to $P_4$, implying that $T(P_4)$ embeds into $G$. By Theorem~\ref{thm:P_n}, $A(P_4)$ also embeds into $G$. But by a result of Kim and Koberda \cite[Theorem~1.8]{KimKoberda}, $A(P_4)$ has a subgroup isomorphic to $A(\Gamma)$. Hence $A(\Gamma)$ embeds into $G$.
\end{proof}

\section{Some examples and questions}
\label{sec:F2F2F2}
\subsection{Trace submonoids of one-relator groups} In this subsection we collect some examples and open questions that naturally arise in the context of this paper.
\begin{rem}
In Proposition~\ref{prop:P3} we can always take $l=1$. 

Indeed, suppose that $a,b,c \in G$ are elements such that $[a,b]=[b,c]=1$ and $k,l,m \in \Z\setminus\{0\}$ are integers such that $a^k$, $b^l$ and $c^m$ generate a copy of $A(P_3)$ in $G$ in the natural way. Then the subgroup $\langle a^k,c^m \rangle$ must be naturally isomorphic to the free group $F_2$ of rank $2$. Since $b$ is central in $\langle a,b,c \rangle$ and has infinite order, we can conclude that $\langle b \rangle \cap \langle a^k,c^m \rangle=\{1\}$, hence the elements $a^k,b,c^m$ generate a copy of $F_2 \times \Z \cong A(P_3)$ in the natural way.
\end{rem}

However, the following example shows that we cannot further improve Proposition~\ref{prop:P3} to require that $k=l=m=1$.

\begin{ex} \label{ex:P3-need-powers}
Consider the fundamental group of the trefoil knot complement 
\[
G = \langle x,y \mid x^3 = y^2 \rangle,
\]
and let $a = x^2y$, $b = x^3$ and $c = xy$. Then $b$ is central in $G$, and the quotient-group $G/\langle b \rangle$ is naturally isomorphic to the free product $C_3 * C_2$.
Using normal forms it is easy to see that the images of $a$ and $c$ freely generate a free submonoid in $G/\langle b \rangle$. This  implies that $a$, $b$ and $c$ generate $T(P_3)$ in $G$ in the natural way, so the assumptions of Proposition~\ref{prop:P3} are satisfied by Lemma~\ref{lem:inj_on_trace->inj_on_unions}. However, the elements $a,b,c$ do not generate a copy of $A(P_3)$ in $G$ since we have $b = (ac^{-1})^3$, for instance (in fact, $\langle a,b,c \rangle=G$ is not a right-angled Artin group).
\end{ex}

A similar example can be given in the case of Proposition~\ref{prop:n=4}. This improves on \cite[Proposition~6.14]{FGNB}, giving an example of a \emph{one-relator} group $G$ that has four elements generating $T(P_4)$ but not $A(P_4)$.

\begin{ex} \label{ex:P4-need-powers-v2}
Let $G = \langle x,y,t \mid x^3=y^2, tx^3t^{-1}=xy \rangle$, and note that the expression $y = x^{-1}tx^3t^{-1}$ obtained from the second relation results in a one-relator presentation $\langle x,t \mid x^{-3}  (x^{-1}tx^3t^{-1})^2=1 \rangle$ for $G$. Consider the elements $a = x^2y$, $b = x^3$, $c = xy$ and $d = txyt^{-1}$ in $G$. 
It follows from the former presentation that $G$ is an HNN-extension of $G_0 = \langle x,y \mid x^3=y^2 \rangle$; consequently, the elements $a,b,c \in G_0$ do not generate a copy of $A(P_3)$ in $G_0$ (and, hence, in $G$) by Example~\ref{ex:P3-need-powers}, implying that $a,b,c,d$ do not generate a copy of $A(P_4)$ in $G$ in the natural way.

However, we claim that the elements $a,b,c,d$ generate a copy of $T(P_4)$ in the natural way.  Indeed, since $G$ is an HNN-extension of $G_0$ with stable letter $t$ and associated subgroups $\langle b \rangle$ and $\langle c \rangle = t \langle b \rangle t^{-1}$, it follows from Britton's Lemma \cite[p.~181]{LyndonSchupp} that the subgroup $\langle G_0, tG_0t^{-1} \rangle \leqslant G$ splits as an amalgamated free product $G_0 *_{\langle c \rangle = t \langle b \rangle t^{-1}} tG_0t^{-1}$. Since we have $\langle a,b,c \rangle = G_0$ by Example~\ref{ex:P3-need-powers} and since $\langle c,d \rangle = t \langle b,c \rangle t^{-1} \cong \Z^2$ is a subgroup of $tG_0t^{-1}$ containing $t \langle b \rangle t^{-1}$, one sees that the subgroup $H=\langle a,b,c,d \rangle \leqslant G$ splits as an amalgamated free product $H \cong G_0 *_{\langle c \rangle} \langle c,d \rangle$. Thus $H$ has a presentation
\[
\langle X,d \mid Q, dcd^{-1} = c \rangle,
\]
where $X = \{ a,b,c \}$ and $\langle X \mid Q \rangle$ is a presentation of $G_0$ in these generators. By Example~\ref{ex:P3-need-powers}, the submonoid generated by $a,b,c \in G_0$ is naturally isomorphic to $T(P_3)$, so we can apply \cite[Theorem~6.15]{FGNB} to deduce that the elements $a,b,c,d$ generate a copy of $T(P_4)$ in $H$ (and, thus, in $G$) in the natural way, as claimed.
\end{ex}

In Propositions~\ref{prop:P_4-enhanced} and \ref{prop:n=4}, to generate a subgroup isomorphic to $A(P_4)$ in addition to raising the generators $a,b,c,d$, of a copy of $T(P_4)$ in $G$, to sufficiently large powers we also needed to replace $a$ and $d$ with $aca^{-1}$ and $d^{-1} b d$, respectively. This is in contrast with Proposition~\ref{prop:P3}, so it is natural to ask the following.

\begin{question}\label{q:conj_not_required} Suppose that $G$ is a one-relator group and elements $a,b,c,d \in G$ generate a submonoid isomorphic to $T(P_4)$ in the natural way. Do there exist $k,l,m,n \in \Z\setminus\{0\}$ such that $a^k,b^l,c^m,d^n$ generate a copy of $A(P_4)$ in $G$ in the natural way? What if we replace $P_4$ by an arbitrary finite tree $\Gamma$?    
\end{question}

As we have already mentioned in Remark~\ref{rem:RAAG_emb_in_1-rel_iff_forest}, a right-angled Artin group $A(\Gamma)$ embeds in a one-relator group if and only if $\Gamma$ is a forest. The proof of the ``only if'' part in \cite{Gray} uses homological arguments, based on the work of Louder and Wilton \cite{LouWil}, which implies that $A(C_n)$ cannot embed in a one-relator group, where $C_n$ is the cycle of length $n \ge 3$. To fully characterise trace monoids that are embeddable into one-relator groups it remains to answer the following question.

\begin{question}\label{q:T(C_n)_in_one-rel} Can the trace monoid $T(C_n)$, for $n \ge 3$, embed in a one-relator group?    
\end{question}

The answer is negative for $n=3$ because one-relator groups cannot contain subgroups isomorphic to $\Z^3$. For $n=4$ the negative answer follows from a result of Bagherzadeh \cite[Theorem~A]{Bagh}, because $T(C_4)$ splits as the direct product of two  non-abelian free monoids.  For $n=5$ it may be possible to get a negative answer by using methods similar to the proof of Proposition~\ref{prop:n=4}. However, we do not know how to answer Question~\ref{q:T(C_n)_in_one-rel} when $n \ge 6$.

\subsection{Groups containing \texorpdfstring{$T(P_4)$}{T(P\_4)} but not \texorpdfstring{$A(P_4)$}{A(P\_4)}}
This subsection is inspired by the following question that is implicit in \cite[Subsection~6.1]{FGNB}: if a group contains a copy of $T(P_4)$, must it also contain $A(P_4)$ as a subgroup? The motivation for this question stems from the result of Foniqi, Gray and Nyberg-Brodda  \cite[Corollary~6.4]{FGNB} stating that if a group $G$ contains a submonoid isomorphic to $T(P_4)$ then there is a rational subset in $G$ with undecidable membership problem.

To produce groups containing $T(P_4)$ we will use the following lemma.

\begin{lemma} \label{lem:F2F2F2}
The trace monoid $T(P_4)$ embeds into the monoid \[S = M_2 \times M_2 \times M_2, \] where $M_2=\{x,y\}^*$ is the free monoid of rank $2$.
\end{lemma}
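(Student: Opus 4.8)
The plan is to build the embedding by hand, using the retractions of $T(P_4)$ onto the three rank-two trace submonoids coming from pairs of non-adjacent vertices. Write $T(P_4)=\langle\alpha_1,\alpha_2,\alpha_3,\alpha_4\rangle$ as in \eqref{eq:pres_HN} (with $n=4$). The pairs of generators that do \emph{not} commute in $T(P_4)$ are exactly $\{\alpha_1,\alpha_3\}$, $\{\alpha_1,\alpha_4\}$ and $\{\alpha_2,\alpha_4\}$; each of these pairs spans an edgeless induced subgraph of $P_4$, so the retraction property recorded in Subsection~\ref{ssec:prelim-raags} yields monoid homomorphisms $\rho_1,\rho_2,\rho_3\colon T(P_4)\to M_2$, where $\rho_j$ kills the two generators outside the $j$-th pair and sends the remaining two to $x$ and $y$ respectively (say $\alpha_1\mapsto x,\alpha_3\mapsto y$ for $\rho_1$; $\alpha_1\mapsto x,\alpha_4\mapsto y$ for $\rho_2$; $\alpha_2\mapsto x,\alpha_4\mapsto y$ for $\rho_3$). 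Setting $\Phi=(\rho_1,\rho_2,\rho_3)\colon T(P_4)\to S$, one obtains
\[
\Phi(\alpha_1)=(x,x,1),\quad \Phi(\alpha_2)=(1,1,x),\quad \Phi(\alpha_3)=(y,1,1),\quad \Phi(\alpha_4)=(1,y,y),
\]
where $1$ is the identity of $M_2$; a one-line check confirms that the images of $\alpha_1,\alpha_2$, of $\alpha_2,\alpha_3$, and of $\alpha_3,\alpha_4$ commute in $S$, so $\Phi$ is a well-defined monoid homomorphism.

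The real work is proving $\Phi$ injective. First I would observe that for $w\in T(P_4)$ each multiplicity $|w|_{\alpha_i}$ is well defined (the relations preserve letter counts) and can be read off $\Phi(w)$ — for instance $|w|_{\alpha_1}$ is the number of $x$'s in the first coordinate of $\Phi(w)$ and $|w|_{\alpha_2}$ the number of $x$'s in the third — so in particular the length $|w|=\sum_i|w|_{\alpha_i}$ is determined by $\Phi(w)$. I would then induct on $|w|$ to show $\Phi(s)=\Phi(t)$ forces $s=t$, the case $|w|=0$ being trivial. For the inductive step, recall the standard fact that a non-empty trace $w$ factors as $w=\alpha_i w'$ for some trace $w'$ if and only if $\alpha_i$ occurs in $w$ and no generator failing to commute with $\alpha_i$ precedes the first $\alpha_i$ in some representative word (equivalently, in every one); since the generators failing to commute with $\alpha_i$ are precisely the partners of $\alpha_i$ in the three chosen pairs, this condition is equivalent to demanding that $\rho_j(w)$ begin with the image of $\alpha_i$ for every $j$ whose pair contains $\alpha_i$, and hence is detected by $\Phi(w)$. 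Thus if $s=\alpha_i s'$ then also $t=\alpha_i t'$ for some $t'$, and then $\Phi(\alpha_i)\Phi(s')=\Phi(s)=\Phi(t)=\Phi(\alpha_i)\Phi(t')$; left cancellativity of $S$ (a direct product of free, hence cancellative, monoids) gives $\Phi(s')=\Phi(t')$, and since $|s'|=|t'|=|w|-1$ the inductive hypothesis yields $s'=t'$, whence $s=t$.

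The main obstacle is exactly this injectivity argument, and within it the bookkeeping that "$w$ begins with $\alpha_i$" is visible in the triple $\rho_1(w),\rho_2(w),\rho_3(w)$; this is where it matters that the three non-adjacent pairs chosen together exhaust all non-commuting pairs of generators and involve every generator. Alternatively, one can invoke the projection (reconstruction) theorem for trace monoids, which states that a trace is determined by its projections onto all pairs of dependent letters together with its letter multiplicities (see \cite{Diekert}); for $P_4$ the dependent pairs are precisely $\{\alpha_1,\alpha_3\}$, $\{\alpha_1,\alpha_4\}$, $\{\alpha_2,\alpha_4\}$, and, as noted above, the multiplicities are already recovered from these projections, so the theorem applies verbatim and gives injectivity of $\Phi$.
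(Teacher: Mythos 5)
Your proof is correct, but it takes a genuinely different route from the paper's. The paper uses a different, ad hoc embedding ($\alpha\mapsto(x,y,1)$, $\beta\mapsto(x,1,x)$, $\gamma\mapsto(1,x,1)$, $\delta\mapsto(y,x,y)$) and proves injectivity by a minimal-counterexample argument: two distinct traces of minimal total length with equal images cannot begin with the same letter by cancellativity, and a case analysis on which coordinates ``see'' the possible first letters produces a contradiction. You instead choose the three coordinates to be exactly the projections onto the three dependent (non-commuting) pairs of generators, so that injectivity becomes an instance of the classical projection/reconstruction theorem for trace monoids; your self-contained induction --- every non-empty trace has a minimal letter, whether $\alpha_i$ is a minimal letter of $w$ is read off from the projections $\rho_j(w)$ over the pairs containing $\alpha_i$, and cancellativity of $S$ then strips $\alpha_i$ from both sides --- is a correct proof of the relevant special case of that theorem. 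The one point worth stating explicitly is that every vertex of $P_4$ lies on at least one non-edge, so the letter multiplicities, and hence $|w|$, are recoverable from $\Phi(w)$ and the induction on length is well founded; you do note this. Your approach buys conceptual clarity and generality (the same argument embeds $T(\Gamma)$ into a product of rank-$2$ free monoids indexed by the non-edges of $\Gamma$ whenever each vertex lies on a non-edge), whereas the paper's argument is elementary and self-contained but tied to its particular choice of map.
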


\begin{proof}
Write $\alpha=\alpha_1$, $\beta = \alpha_2$, $\gamma = \alpha_3$ and $\delta = \alpha_4$ for the four generators of $T(P_4)$, as in presentation~\eqref{eq:pres_HN}. Given an element $R \in T(P_4)$, we will write $|R|$ for the length of a shortest word in $\{\alpha,\beta,\gamma,\delta\}^*$ representing $R$. We will say that $R$ \emph{starts with} a word $U \in \{\alpha,\beta,\gamma,\delta\}^*$ if $U$ is the prefix of some word of minimal length representing $R$ in $T(P_4)$.

Consider the assignment $\varphi\colon \{\alpha,\beta,\gamma,\delta\} \to S$, defined by
\begin{align*}
\varphi(\alpha) &= (x,y,1), \\
\varphi(\beta)  &= (x,1,x), \\
\varphi(\gamma) &= (1,x,1), \\
\varphi(\delta) &= (y,x,y).
\end{align*}
It is easy to verify that $\varphi$ extends to a (unique) monoid homomorphism $\varphi\colon T(P_4) \to S$. We claim that $\varphi$ is injective.

Suppose, for contradiction, that $\varphi$ is not injective, and let $P,Q \in T(P_4)$ be two distinct elements such that $\varphi(P) = \varphi(Q)$. We may choose such $P$ and $Q$ so that $|P|+|Q|$ is as small as possible. Since $S$ is cancellative (as it embeds in a group),  $P$ and $Q$ cannot start with the same letter in $\{\alpha,\beta,\gamma,\delta\}$ (by the minimality assumption). It is also clear that both $P$ and $Q$ must be non-trivial.

If either $P$ or $Q$ (without loss of generality, $P$) starts with $\alpha$, then, by looking at the projection to the second copy of $M_2$ in $S$, we can deduce that $Q$ starts with $\beta^k \alpha$, for some $k \in \mathbb{N}$. But since $\alpha\beta = \beta\alpha$ in $T(P_4)$, we see that $Q$ starts with $\alpha \beta^k$, contradicting the minimality of $|P|+|Q|$. Thus neither $P$ nor $Q$ can start with $\alpha$; a similar argument (using the projection to the first copy of $M_2$ in $S$) shows that neither $P$ nor $Q$ can start with $\delta$. We may therefore assume (after swapping $P$ and $Q$, if necessary) that $P$ and $Q$ start with $\beta$ and $\gamma$, respectively.

Now let $U$ be the longest word in $\{\alpha,\beta,\gamma\}^*$ such that $Q$ starts with $U$. Since $P$ starts with $\beta$, it follows, by looking at the projection to the third copy of $M_2$ in $S$, that $U$ contains at least one instance of $\beta$. But since $\alpha\beta = \beta\alpha$ and $\gamma\beta = \beta\gamma$ in $T(P_4)$, we can re-write $U$ in $T(P_4)$ as $\beta^k V$, for some $V \in \{\alpha,\gamma\}^*$ and $k \geq 1$. Hence both $P$ and $Q$ start with $\beta$, again contradicting the minimality of $|P|+|Q|$. Thus  $\varphi$ must be injective, as claimed.
\end{proof}

Note that the monoid $S$ from Lemma~\ref{lem:F2F2F2} is also a trace monoid, isomorphic to $T(K_{2,2,2})$, where $K_{2,2,2}$ is the $1$-skeleton of an octahedron. 

We will now give two examples of groups containing copies of $T(P_4)$ as submonoids but not containing subgroups isomorphic to $A(P_4)$.

\begin{ex}
By Lemma~\ref{lem:F2F2F2}, the trace monoid $T(P_4)$ is a submonoid of the direct product $G = F_2 \times F_2 \times F_2$, where $F_2$ is the free group of rank $2$. Note that $G\cong A(K_{2,2,2})$ is a right-angled Artin group, and, as $K_{2,2,2}$ has no induced subgraphs isomorphic to $P_4$, we know, by \cite[Theorem~1.7]{KimKoberda}, that $A(P_4)$ does not embed into $G$. 

This gives an easy example of a residually free group $G$ that has a submonoid isomorphic to $T(P_4)$. On the other hand, it is well-known that $A(P_4)$ is not residually free, so it cannot embed into a residually free group.
\end{ex}

\begin{ex} As we have already seen in Example~\ref{ex:BS}, the free monoid $M_2$ embeds into the Baumslag--Solitar group $BS(1,2)$. Hence, by Lemma~\ref{lem:F2F2F2}, $T(P_4)$ embeds into the direct product $H$ of three copies of $BS(1,2)$. Now, $H$ is a metabelian group, so it does not contain non-abelian free subgroups. In particular, $A(P_4)$ is not a subgroup of $H$.    
\end{ex}

\section{\texorpdfstring{$C^*$}{C*}-simplicity of one-relator groups}\label{sec:C*-simple}
In this section we characterise the $C^*$-simplicity of one-relator groups.
Since any group with property $P_{nai}$ is $C^*$-simple, in view of Corollary~\ref{cor:P_nai}, we just need to investigate the $C^*$-simplicity of generalised Baumslag--Solitar groups.

Recall that a generalised Baumslag--Solitar group $G$ is said to be \emph{unimodular} if for any $x,y \in G\setminus\{1\}$, the equation $xy^m x^{-1}=y^n$ implies that $|m|=|n|$. This condition is equivalent to the requirement that the image of any \emph{modular homomorphism} $G \to \mathbb{Q}^*$ is contained in $\{-1,1\}$ (see \cite[Section~2]{Levitt}). By \cite[Proposition~2.6]{Levitt} a  generalised Baumslag--Solitar group is unimodular if and only if it  has an infinite cyclic normal subgroup; in this case it also contains a finite index subgroup splitting as a direct product of a free group with $\mathbb{Z}$.

The following statement combines results of de la Harpe and Pr\'{e}aux \cite{dlH-Pre} and of Brownlowe, Mundey, Pask, Spielberg and Thomas \cite{BMPST} with well-known properties of graphs of groups.

\begin{prop}\label{prop:GBS-C*-simple} Let $G$ be a non-trivial generalised Baumslag--Solitar group. If $G$ is not $C^*$-simple then either $G$ is isomorphic to a solvable Baumslag--Solitar group $BS(1,n)$, for some $n \in \Z\setminus\{0\}$, or $G$ is unimodular.
\end{prop}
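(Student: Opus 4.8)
The plan is to use the Bass--Serre structure of a generalised Baumslag--Solitar group $G$ together with the two external inputs. First I would recall that $G$ acts on a tree $\cT$ with infinite cyclic vertex and edge stabilisers, coming from a finite graph of groups decomposition, and that $G$ is either virtually $F \times \Z$ (in the unimodular case) or contains a Baumslag--Solitar subgroup $BS(1,n)$ with $|n| \geq 2$ when it is non-unimodular. The key dichotomy to exploit is the modular homomorphism $\Delta\colon G \to \Q^*$: if $\Delta$ has image in $\{\pm 1\}$ then $G$ is unimodular and we are done, so assume $\Delta(G) \not\subseteq \{\pm 1\}$, i.e.\ $G$ is non-unimodular. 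In that case some element of $G$ conjugates a power of a vertex generator $a$ to a different power, producing an ascending but non-surjective phenomenon on the cyclic subgroup $\langle a\rangle$.

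The heart of the argument is to show that a non-unimodular GBS group that is \emph{not} $C^*$-simple must be $BS(1,n)$. Here I would invoke the result of de la Harpe and Pr\'eaux \cite{dlH-Pre}: a non-$C^*$-simple group has a non-trivial amenable normal subgroup (equivalently, its amenable radical is non-trivial), or more precisely one uses that failure of $C^*$-simplicity for a group acting on a tree forces strong constraints — typically that $G$ fixes an end of $\cT$ or has a normal subgroup fixing an end. Combined with the structural classification of GBS groups fixing an end of their Bass--Serre tree (these are exactly the ascending HNN-extensions with cyclic base, i.e.\ the solvable Baumslag--Solitar groups $BS(1,n)$), this pins down $G \cong BS(1,n)$. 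The input of Brownlowe--Mundey--Pask--Spielberg--Thomas \cite{BMPST} enters to handle the converse direction and to identify exactly which GBS groups (described via their graph-of-groups / graph $C^*$-algebra data) have simple reduced $C^*$-algebra; in particular unimodular GBS groups have a non-trivial amenable (indeed infinite cyclic) normal subgroup by \cite[Proposition~2.6]{Levitt}, hence are never $C^*$-simple, so they are legitimately allowed in the conclusion.

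Concretely, the steps I would carry out are: (1) assume $G$ is a non-trivial GBS group that is not $C^*$-simple; (2) if $G$ is unimodular, stop — this branch of the disjunction holds; (3) otherwise, $G$ is non-unimodular, so it has no non-trivial infinite cyclic normal subgroup and its modular map is non-trivial; (4) using non-$C^*$-simplicity, extract a non-trivial amenable normal subgroup $A \lhd G$ (via \cite{dlH-Pre}, together with the fact that GBS groups are countable and act on trees so the relevant criteria apply); (5) analyse the action of $A$ on $\cT$: since $A$ is amenable and normal, it is elliptic or fixes an end; (6) conclude that $G$ itself fixes an end of $\cT$, so the underlying graph of groups is a (finite) ascending tower and $G$ is an ascending HNN-extension of a cyclic group; (7) identify such $G$ as $BS(1,n)$ for some $n \in \Z \setminus \{0\}$. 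Thus every non-$C^*$-simple GBS group is either $BS(1,n)$ or unimodular.

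The main obstacle I expect is step (6): translating ``$G$ is not $C^*$-simple'' into ``$G$ fixes an end of $\cT$.'' The cleanest route is probably to use that a non-$C^*$-simple group has non-trivial amenable radical (by \cite{BKKO}, or the weaker Powers-type criterion via \cite{dlH-Pre}), and that an infinite amenable normal subgroup of a group acting acylindrically-enough on a tree with cyclic stabilisers must fix an end or a point; since a non-unimodular GBS group has trivial centre and no infinite cyclic normal subgroup, the amenable radical being non-trivial is already a severe restriction that should force the ascending-HNN structure. One must be careful about the finite/trivial cases (if $G$ is trivial the statement is vacuous since $G$ is assumed non-trivial, and GBS groups are always torsion-free and infinite, so the amenable radical, if non-trivial, is infinite). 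Handling the bookkeeping of the graph of groups — ruling out loops and higher-rank pieces once an end is fixed — is routine graph-of-groups manipulation and should not present real difficulty.
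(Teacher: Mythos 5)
Your argument has a genuine gap at its core, namely step (4). The implication ``$G$ is not $C^*$-simple $\Rightarrow$ $G$ has a non-trivial amenable normal subgroup'' is false in general: by \cite{BKKO}, triviality of the amenable radical characterises the unique trace property, not $C^*$-simplicity, and it is now known (e.g.\ by examples of Le Boudec) that there exist non-$C^*$-simple groups with trivial amenable radical. Neither \cite{dlH-Pre} nor \cite{BKKO} gives you the converse direction you need, so you cannot extract the normal subgroup $A$ on which steps (5)--(7) depend. Your hedge that failure of $C^*$-simplicity ``typically'' forces $G$ to fix an end or have a normal subgroup fixing an end is precisely the unproved content; for GBS groups it is essentially equivalent to the proposition itself, so assuming it is circular.

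The paper avoids this by proving the contrapositive directly: assuming $G$ is neither unimodular nor fixes an end of its Bass--Serre tree $\cT$ (and $\cT$ is not a line), it shows $G$ \emph{is} $C^*$-simple. The inputs are a Powers-type criterion of de la Harpe and Pr\'eaux --- a strongly hyperbolic, \emph{slender} action on a tree implies $C^*$-simplicity \cite[Corollaries~15 and 2]{dlH-Pre} --- together with \cite[Corollary~7.11]{BMPST}, which says that the boundary action of a non-unimodular GBS group is topologically free (equivalently, slender). You cite both references but assign them the wrong roles: the BMPST topological freeness result is not bookkeeping for the converse, it is the decisive ingredient in the direction you are trying to prove. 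The remaining cases in the paper's proof (action fixing an end gives $BS(1,n)$; $\cT$ a line or a point gives a cyclic normal subgroup, hence unimodularity) match the elementary parts of your outline and are fine. To repair your write-up, replace steps (4)--(6) with: reduce the graph of groups, observe the action on $\cT$ is minimal, dispose of the end-fixing and line cases as above, and in the remaining case invoke strong hyperbolicity \cite[Proposition~14]{dlH-Pre} plus topological freeness from \cite{BMPST} to conclude $C^*$-simplicity.
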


\begin{proof} By the assumptions, $G=\pi_1(\mathcal{G},\Gamma)$, where $(\mathcal{G},\Gamma)$ is a non-empty finite graph of groups with infinite cyclic vertex and edge groups. Moreover, after collapsing finitely many \emph{trivial edges}, we can suppose that this graph of groups is \emph{reduced}, i.e., for every edge $e \in E(\Gamma)$, starting at a vertex $u$ and ending at a vertex $v$, if $u \neq v$ then the embeddings of $G_e$ in $G_u$ and $G_v$ are proper (see \cite[Section~4.1]{dlH-Pre}).

Let $\cT$ be the Bass--Serre for the given splitting of $G$ as the fundamental group of $(\mathcal{G},\Gamma)$. Then $G$ acts on $\cT$ with infinite cyclic vertex and edge stabilisers.
Since $(\mathcal{G},\Gamma)$ is reduced, the action of $G$ on $\cT$ is \emph{minimal}, i.e., there is no proper invariant subtree \cite[Proposition~7.12]{Bass}. If $\Gamma$ consists of a single vertex $v$ and no edges then $G=G_v \cong\Z$ and so it is unimodular. Thus we can further suppose that $\Gamma$ has at least one edge. 

If the action of $G$ fixes an end in $\partial\cT$ then $V(\Gamma)=\{v\}$ and $G$ is isomorphic to an ascending HNN-extension of $G_v \cong \Z$ by \cite[Proposition~4.13]{MinasyanOsin}. In other words, $G \cong BS(1,n)$, for some $n \in \Z\setminus\{0\}$.

If $\cT$ is a simplicial line then the kernel $N$ of the action of $G$ on this line is a cyclic normal subgroup of $G$. If $N=\{1\}$ then $G$ is a subgroup of the infinite dihedral group $\mathrm{Aut}(\cT)$, hence $G$ must be infinite cyclic as it is torsion-free. Otherwise, $N \cong \Z$. In either case we conclude that $G$ is unimodular.

Thus we can suppose that $G$ does not fix any end of $\cT$ and $\cT$ is not a line. Then the $G$-action on $\cT$ is \emph{strongly hyperbolic}, see \cite[Proposition~14]{dlH-Pre}. And if $G$ is not unimodular then the action of $G$ on $\partial \cT$ is \emph{topologically free} by \cite[Corollary~7.11]{BMPST}. In the terminology of \cite{dlH-Pre} the latter means that the action of $G$ on $\cT$ is \emph{slender}, and we can apply \cite[Corollaries~15 and 2]{dlH-Pre} to deduce that $G$ is $C^*$-simple.
\end{proof}

\begin{proof}[Proof of Corollary~\ref{cor:C*-simple}] Let $G$ be a  group given by presentation \eqref{eq:1-rel_pres} with $k \ge 2$. 
If $G$ satisfies \ref{it:C*-BS1n} then it is infinite and solvable and if $G$ satisfies \ref{it:C*-unimod} then it contains an infinite cyclic normal subgroup. In either case $G$ contains a non-trivial amenable normal subgroup, so it cannot be $C^*$-simple (see, for example, \cite[Proposition~3]{dlH-survey}).

For the opposite direction, suppose that $G$ is not $C^*$-simple. The work of Bekka, Cowling and de la Harpe \cite[Lemmas~2.2 and 2.1]{Bek-Cow-Har} shows that $G$ does not have property $P_{nai}$. Using Theorem~\ref{thm:one-relator-free-or-snormal} and Remark~\ref{rem:at_least_3_gens} we can deduce that 
$k=2$ and $G$ must be a generalised Baumslag--Solitar group. We can now apply
Proposition~\ref{prop:GBS-C*-simple} to conclude that $G$ must satisfy \ref{it:C*-BS1n} or \ref{it:C*-unimod}, as required.    
\end{proof}

\begin{rem}\label{rem:deciding_if_C*-simpl} There is an algorithm taking on input presentation \eqref{eq:1-rel_pres} and deciding whether or not $G$ satisfies conditions \ref{it:C*-BS1n} or \ref{it:C*-unimod} from Corollary~\ref{cor:C*-simple}. 

Indeed, this can be done by analysing the proofs of Proposition~\ref{prop:1-rel_ah-refinement} and \cite[Proposition~4.21]{MinasyanOsin}. More precisely, it is possible to decide if $G$ is in case \ref{it:MO-snormal} from the proof of Proposition~\ref{prop:1-rel_ah-refinement} by using Howie's algorithm for computing intersection of Magnus subgroups in a one-relator group \cite[Theorem~E]{Howie}. It would also tell us whether $a^n=b^{\pm n}$, for some $n \in \Z\setminus\{0\}$, which happens if and only if $G$ is unimodular. We can also decide whether $G$ is in case \ref{it:MO-mappingtorus} by using the solution to the membership problem for Magnus subgroups of one-relator groups \cite[Theorem~IV.5.3]{LyndonSchupp}. Moreover, if $G$ is indeed an ascending HNN-extension of a free group $F$, we can compute the rank of $F$ and check if the corresponding endomorphism of $F$ is an automorphism; if true then we can determine whether this automorphism has finite order in $\mathrm{Out}(F)$ because we know the bounds for the orders of finite subgroups in $\mathrm{Out}(F)$ \cite[Theorem on p.~83]{Wan-Zim}.
\end{rem}

\bibliographystyle{amsplain}
\bibliography{ref}
\end{document}